\def\today{\ifcase\month\or
  January\or February\or March\or April\or May\or June\or
  July\or August\or September\or October\or November\or December\fi
  \space\number\day, \number\year}
\DeclareMathOperator{\Si}{\mathrm{Si}}
 \newtheorem{theorem}{Theorem}[subsection]
  \newtheorem{conjecture}[theorem]{Conjecture}
 \newtheorem{lemma}[theorem]{Lemma}
 \newtheorem{proposition}[theorem]{Proposition}
 \newtheorem{corollary}[theorem]{Corollary}
 \theoremstyle{definition}
 \theoremstyle{remark}
 \newcommand{\R}{\mathbb{R}}
 \newcommand{\N}{\mathbb{N}}
 \newcommand{\Z}{\mathbb{Z}}
  \renewcommand{\d}{\text{\rm d}}
 \newcommand{\du}{\text{\rm d}u}
\newcommand{\im}{{\rm Im}\,}
\newcommand{\re}{{\rm Re}\,}
\renewcommand{\Re}{\text{Re}}
\begin{document}
\title[On the number variance of zeta zeros]{On the number variance of zeta zeros \\ and a conjecture of Berry}
\author[Lugar]{Meghann Moriah Lugar}
\author[Milinovich]{Micah B. Milinovich}
\author[Quesada-Herrera]{Emily Quesada-Herrera}
\address{University of Mississippi - University, MS 38677 USA}
\email{mmgibso1@olemiss.edu}
\address{University of Mississippi - University, MS 38677 USA}
\email{mbmilino@olemiss.edu}
\address{Graz University of Technology, Institute of Analysis and Number Theory, Steyrergasse 30/II, 8010 Graz, Austria}
\email{quesada@math.tugraz.at}

\allowdisplaybreaks
\numberwithin{equation}{section}

\maketitle

\begin{abstract}
Assuming the Riemann hypothesis, we prove estimates for the variance of the real and imaginary part of the logarithm of the Riemann zeta-function in short intervals. We give three different formulations of these results.  Assuming a conjecture of Chan for how often gaps between zeros can be close to a fixed nonzero value, we prove a conjecture of Berry (1988) for the number variance of zeta zeros in the non-universal regime. In this range, GUE statistics do not describe the distribution of the zeros.  We also calculate lower-order terms in the second moment of the logarithm of the modulus of the Riemann zeta-function on the critical line. Assuming Montgomery's pair correlation conjecture, this establishes a special case of a conjecture of Keating and Snaith (2000). 
\end{abstract}

\section{Introduction}
Understanding the distribution of the zeros of the Riemann zeta-function, $\zeta(s)$, is an important problem in number theory. Let $N(t)$ be the number of zeros $\rho=\beta+i\gamma$ of $\zeta(s)$ such that $0<\gamma \le t$ and $0< \beta < 1$ (counted with multiplicity, where the zeros with $\gamma=t$ are counted with weight $\frac{1}{2}$). It is known that 
\begin{equation}\label{N(T)definition}
    N(t)= \frac{t}{2\pi}\log \frac{t}{2\pi} -\frac{t}{2\pi} +\frac{7}{8} +S(t) + O\!\left(\frac{1}{t} \right), 
\end{equation}
where, for $t\neq \gamma$, the function is defined by
\begin{equation*}
    S(t)=\frac{1}{\pi}\arg \zeta\!\left(\tfrac{1}{2}+it\right)
\end{equation*}
with the argument obtained by a continuous variation along the straight line segments joining the points $2,$ $2+it$, and $\frac{1}{2}+it$ starting with the value $\arg \zeta(2)=0.$ If $t=\gamma$ for a zero of $\zeta(s)$, we define
\begin{equation*}
    S(t)=\lim_{\varepsilon\to0}\frac{S(t+\varepsilon)+S(t-\varepsilon)}{2}.
\end{equation*}
By equation \eqref{N(T)definition} and the well-known estimates $S(t)\ll \log t$ and $\int\limits_0^T S(t)\,\d t \ll  \log T$, we can think of $S(t)$ as the difference between the actual and average number of zeros around height $t$. 

\smallskip

From \eqref{N(T)definition}, we expect that there are about $\delta$ zeros of $\zeta(s)$ with ordinates in the interval $[t, t+\frac{2\pi\delta}{\log T}]$ when $0<t \le T$ and $T$ is large. We define the {\it number variance} of the zeros of $\zeta(s)$ by
\begin{equation}\label{eq:number-variance}
    \int_0^T \left[N\!\left(t+\tfrac{2\pi\delta}{\log T}\right)-N(t)-\delta\right]^{2} \d t.
\end{equation}
This quantity has been studied by a number of authors, for instance \cite{Be, BeK, Fu0, Fu1, Fu3, GaMu}.
By \eqref{N(T)definition}, up to a small error, the integral in \eqref{eq:number-variance} is equal to
\[
\int_0^T \left[S\!\left(t+\tfrac{2\pi\delta}{\log T}\right)-S(t)\right]^{2} \d t.
\]
As described in Section \ref{BS}, Berry \cite{Be} (see also \cite{BeK}) has given a precise conjecture for the asymptotic behavior for this integral. In the \textit{universal regime} of his model, when $\delta=o(\log T)$, Berry conjectured an asymptotic formula that matches exactly the variance of eigenvalues of GUE random matrices. However, when $\delta \gg \log T$, in the so-called \textit{non-universal regime} of his model, his conjecture is no longer described by the predictions from GUE and incorporates additional input from the primes. 

\smallskip

Building upon ideas of Selberg \cite{SB2} and Goldston \cite{Go}, Gallagher and Mueller \cite{GaMu} and Fujii \cite{Fu1} have given a conditional proof of Berry's conjecture in the universal regime assuming both the Riemann Hypothesis (RH) and versions of Montgomery's pair correlation conjecture. In this paper, we introduce new ideas to prove novel results on the number variance of zeta zeros in the non-universal regime when $\delta \gg \log T$. In particular, we show that new input from both the zeros and primes is needed in this regime, requiring information on the zeros beyond pair correlation (since we no longer expect GUE behavior in this range). In Section \ref{NV}, we give three different formulations of these results, stated as Theorems \ref{thm:short-intervals-keating} -- \ref{thm:short-intervals-berry}. In Section \ref{BS}, we show how our results give a conditional proof of Berry's conjecture in the non-universal regime assuming RH and a conjecture of Chan \cite{Ch} for the pair correlation of zeta zeros in longer ranges (which examines how often normalized gaps between zeros can be close to a fixed nonzero value).  
Roughly, pair correlation studies the distribution of gap sizes localized near zero with respect to the average spacing, whereas our new results require information about the distribution of gap sizes localized near other points.

\smallskip

Before stating our new results 
on the number variance of zeta zeros, we first describe the work of Selberg \cite{SB2,SB3} and Goldston \cite{Go} on the moments of $S(t)$ and $\log|\zeta(\frac{1}{2}+it)|$ and the connection to the pair correlation of zeta zeros.
Analogous to a result of Goldston for $S(t)$, our Theorem \ref{thm:variance} gives lower-order terms for the second moment of $\log|\zeta(\frac{1}{2}+it)|$ assuming RH, in terms of the pair correlation of zeta zeros. Assuming Montgomery's pair correlation conjecture, Theorem \ref{thm:variance} establishes a special case of a conjecture of Keating and Snaith \cite{KS}.

\subsection{Selberg's Central Limit Theorem} 
A celebrated and classical result of Selberg is that the real and imaginary parts of the logarithm of the Riemann zeta-function are normally distributed on the critical line. He proved this by estimating moments of $S(t)$, first assuming RH and then later without any conditions with the same main term and a slightly weaker error term \cite{SB2,SB3}. Assuming RH, Selberg showed that, for $k\in \N$ and $T\ge 3$, 
\begin{equation}\label{SelbergS(T)moments} 
\int\limits_0^T S(t)^{2k} \, \d t = \frac{(2k)!}{k! (2\pi)^{2k}} T (\log\log T)^k \left[ 1 + O\!\left( \frac{1}{\log\log T} \right)\right].
\end{equation}
In other words, the moments of $S(t)$ are Gaussian. In this way, Selberg \cite{SB4} deduces a \textit{central limit theorem} for $S(t)$:
\begin{equation}\label{S(T)centrallimittheorem}
\lim \limits_{T\rightarrow \infty} \frac{1}{T} \text{ meas}\left\lbrace T\leq t\leq 2T : \frac{\pi S(t)}{\sqrt{\frac12 \log \log T}} \in [a,b] \right\rbrace= \frac{1}{\sqrt{\pi}} \int \limits_{a}^b e^{-x^2/2}\, \d x.
\end{equation}
This tells us $\pi S(t)$ is normally distributed for $t \in[T,2T]$ with mean 0 and variance $\tfrac12 \log\log T$, when $T$ is large. Selberg (unpublished) also considered the moments of $\log|\zeta(\frac{1}{2}+it)|$. Using techniques outlined by Tsang \cite{TS1}, assuming RH it can be shown that
\begin{equation}\label{SelbergLogZetamoments}
\int \limits_0^T\log^{2k}\!|\zeta(\tfrac12 +it)| \, \d t = \frac{(2k)!}{k!2^{2k}} T (\log\log T)^k \left[ 1 + O\!\left( \frac{1}{\log\log T} \right)\right].
\end{equation}
These moments can be calculated unconditionally with a slightly weaker error term. A corresponding central limit theorem for $\log|\zeta(\tfrac12 +it)|$, analogous to \eqref{S(T)centrallimittheorem}, follows from the work Selberg and Tsang. See Radziwi\l\l\, and Soundararajan  \cite{RaSo} for a recent and simplified proof of Selberg's central limit theorem for $\log|\zeta(\tfrac12 +it)|$.

\subsection{The variance in Selberg's central limit theorem}

Selberg modeled $\log\zeta(s)$ near the critical line using information from the primes and the zeros of $\zeta(s)$. He arrives at the main term in \eqref{SelbergS(T)moments} using information from the primes. The information about the zeros is cleverly contained in his error term. \smallskip

Recall that the {\it variance} of a distribution is given by its second moment, which corresponds to taking $k=1$ in \eqref{SelbergS(T)moments}.  
Goldston \cite{Go} gave a refined estimate for the variance of $S(t)$ in Selberg's central limit theorem utilizing finer information from both the primes and the zeros of $\zeta(s)$ in his representation of $\log \zeta(s)$. He does so through methods relying, in part, on Montgomery's work \cite{Mo} on the pair correlation of the zeros of $\zeta(s)$. Assuming RH, Goldston shows that
\begin{equation*}\label{GoldstonR2}
\int\limits_0^T |S(t)|^2 \, \d t = \frac{T}{2\pi^2} \log\log T +\frac{aT}{\pi^2} + o(T),
\end{equation*}
as $T\to\infty$, where the constant $a$ is given by
\begin{equation}\label{GoldstonConstant}
a= \frac{1}{2} \left( \gamma_0 +  \sum\limits_{m=2}^{\infty} \sum \limits_{p}\left( \frac{1}{m^2}-\frac{1}{m}\right) \frac{1}{p^m} + \int\limits_{1}^{\infty} \frac{F(\alpha)}{\alpha^2} \, \d \alpha\right),
\end{equation}
$\gamma_0$ is Euler's constant, and the sum over $p$ runs over the primes.
Here, the term with $F(\alpha)$ captures the information from the zeros of $\zeta(s)$. The function $F(\alpha)$ was introduced by Montgomery \cite{Mo} to study the pair correlation of the zeros of $\zeta(s)$,  and is defined by
\begin{equation}  \label{FFunctionAlphaT}
F(\alpha)=F(\alpha,\,T):=\bigg(\dfrac{T}{2\pi}\log T\bigg)^{-1}\displaystyle\sum_{0<\gamma, \gamma'\leq T}T^{\,i\alpha(\gamma-\gamma')}w(\gamma-\gamma'),
\end{equation}
for $\alpha\in \R$ and $T\geq 2$, where $w(u)=4/(4+u^2)$, and the double sum runs over the ordinates, $\gamma$ and $\gamma'$, of non-trivial zeros of $\zeta(s)$.  In this way, we see that Goldston's result contains information from both the primes and the zeros in the definition of the constant $a$. As initially defined, the constant $a$ actually depends on $T$. In Lemma \ref{lem:a(T)} we show that this dependence is mild (see also \cite[Theorem 2]{Go}).\smallskip

Our first theorem is an analogue of Goldston's more precise result for the second moment of $\log |\zeta(\tfrac{1}{2}+it)|$, refining the case $k=1$ in \eqref{SelbergLogZetamoments}.

\begin{theorem}\label{thm:variance}
Assume RH and let $F(\alpha)$ be defined by \eqref{FFunctionAlphaT}. Then, as $T\to\infty$, 
\begin{equation*}
\int\limits_0^T \log^2 |\zeta(\tfrac{1}{2}+it)| \,\d t= \frac{T}{2} \log\log T +a T + o(T),
\end{equation*}
where the constant $a$ is given by \eqref{GoldstonConstant}.
\end{theorem}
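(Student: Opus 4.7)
The plan is to adapt Goldston's proof of the refined second-moment asymptotic for $S(t)$ to the real part of $\log\zeta$. Assuming RH, one starts from Selberg's identity: for a parameter $x\ge 2$ and the standard smoothed von Mangoldt function $\Lambda_x$ (equal to $\Lambda$ on $[1,x]$ and to $\Lambda(n)\log(x^2/n)/\log x$ on $(x,x^2]$), one has
\[
\log\zeta(s)=\sum_{n\le x^2}\frac{\Lambda_x(n)}{n^{s}\log n}+\frac{1}{\log x}\sum_{\rho}\frac{x^{\rho-s}\bigl(x^{(\rho-s)/2}-1\bigr)^{2}}{(\rho-s)^{2}}
\]
for $s=\tfrac{1}{2}+it$ not a zero of $\zeta$. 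Taking real parts, squaring, and integrating over $[0,T]$ produces the mean square of the Dirichlet polynomial over prime powers, the mean square of the real part of the zero sum, and their cross term.

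For the prime sum, an application of the Montgomery--Vaughan mean value theorem reduces the second moment (up to $o(T)$, upon choosing $x=T^{\eta}$ with $\eta>0$ small) to
\[
\frac{T}{2}\sum_{n\le x^2}\frac{\Lambda_x(n)^{2}}{n\log^{2}n}.
\]
Splitting this into contributions from primes and from prime powers $p^{m}$ with $m\ge 2$, and invoking Mertens' theorem together with the standard evaluation of the smoothed tail on $(x,x^{2}]$, yields $\tfrac{T}{2}\log\log T+\tfrac{T}{2}\bigl(\gamma_{0}+\sum_{m\ge 2}\sum_{p}1/(m^{2}p^{m})\bigr)+o(T)$, recovering the leading term and the positive part of the Euler-product sum appearing in $a$. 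For the zero-sum mean square, expand the square and exchange the $t$-integral with the double sum over $\gamma,\gamma'$: on RH each term depends only on $\gamma-t$ and $\gamma'-t$, and the resulting kernel is the Fourier transform of an explicit test function supported in $[-1,1]$. The mean square is then $\tfrac{T\log T}{2\pi}$ times an integral of $F(\alpha,T)$ against this test function; passing to the limit $\eta\to 0$ using Montgomery's bounds $F(\alpha)\ll \alpha$ on $[0,1]$ and $F(\alpha)\ll 1$ otherwise, together with Lemma \ref{lem:a(T)} to control the mild $T$-dependence, produces the $\int_{1}^{\infty}F(\alpha)/\alpha^{2}\,\d\alpha$ contribution to $a$, together with the remaining prime-power piece $-\sum_{m\ge 2}\sum_{p}1/(mp^{m})$.

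The cross term between the prime sum and the zero sum is either absorbed into the $o(T)$ error by Cauchy--Schwarz or evaluated more precisely via Parseval, mirroring Goldston's treatment where the corresponding sine--sine cross term in $S(t)$ vanishes to leading order; here an analogous cosine--cosine cancellation is expected. The principal obstacle is bookkeeping: ensuring that the three contributions combine to yield exactly the constant $a$ in \eqref{GoldstonConstant}, which requires carefully tracking the normalisation at the boundary $\alpha=1$ of the Fourier support, where the smoothed tail of $\Lambda_x$ in $(x,x^{2}]$ must match the boundary behaviour of the $F(\alpha)$ integral so that no spurious constant appears. Verifying this cancellation, together with the uniformity needed to let $\eta\to 0$, is where the proof's delicacy lies.
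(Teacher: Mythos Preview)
Your treatment of the cross term is the genuine gap. You claim it is either $o(T)$ by Cauchy--Schwarz or vanishes by an ``analogous cosine--cosine cancellation'' as in Goldston. Neither holds. Cauchy--Schwarz gives only $\bigl(\int |A|^2\bigr)^{1/2}\bigl(\int |B|^2\bigr)^{1/2}\ll T\sqrt{\log\log T}$, since $\int|A|^2\asymp T\log\log T$ and $\int|B|^2\asymp T$. More importantly, Goldston never computes $\int AB$ directly and it does \emph{not} vanish: his device (and the paper's) is the algebraic identity $|\log|\zeta||^2 = -|A|^2 + 2A\log|\zeta| + |\log|\zeta|-A|^2$ (up to the extra Stirling term), so that the cross term is replaced by $H_1=2\int A(t)\log|\zeta(\tfrac12+it)|\,\d t$. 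This is then evaluated via the Titchmarsh--Goldston contour estimate $\int_0^T \log\zeta(\tfrac12+it)\,n^{it}\,\d t = T\Lambda(n)/(n^{1/2}\log n)+O(\cdots)$, giving the expression in Lemma~\ref{lem:H}. The resulting combination $G_1+H_1$ carries $\beta$-dependent pieces (the $\log\beta$ and $\int_0^1 \alpha\,g(\alpha)^2\,\d\alpha$ in Lemma~\ref{lem:h+g}) that cancel exactly against matching pieces in $R_1$ (Lemma~\ref{lem:r}); this is the ``delicate cancellation of main terms'' the paper flags, and it is not mere bookkeeping---without the auxiliary function $g$ of \eqref{eq:def_g} one would not see why the answer is independent of the truncation parameter.

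Two further points. First, your starting representation (Selberg's $\Lambda_x$ identity) is different from the paper's, which integrates Montgomery's explicit formula for $\Re\tfrac{\zeta'}{\zeta}$ against a specific kernel to produce the functions $f,g,h$ of Section~\ref{sec:representation}. Either route could in principle be made to work, but you must actually compute $H_1$, not dismiss the cross term. Second, the paper stresses a difficulty absent from the $S(t)$ case: the zero-kernel $h$ has an unbounded (though integrable) singularity at the origin, reflecting the logarithmic blow-up of $\log|\zeta(\tfrac12+it)|$ at ordinates of zeros. This forces the careful analysis in Section~3.2 (restricting first to a sequence $\{T_n\}$ bounded away from ordinates) before one can localise the double sum over zeros and apply the pair-correlation machinery. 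Your sketch does not address this, and your plan to ``pass to the limit $\eta\to 0$'' is also suspect: in the paper $\beta$ is held fixed and the $\beta$-dependence cancels exactly, whereas letting the truncation exponent tend to zero would make several error terms diverge.
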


Montgomery's Strong Pair Correlation Conjecture states that $F(\alpha)\sim 1$ uniformly on compact intervals when $\alpha >1$ and $T\to\infty$. Assuming this conjecture, we see that
\[
a= \frac{1}{2} \left( 1 + \gamma_0 +  \sum\limits_{m=2}^{\infty} \sum \limits_{p}\left( \frac{1}{m^2}-\frac{1}{m}\right) \frac{1}{p^m} \right),
\]
and Theorem \ref{thm:variance} establishes a special case of a conjecture of Keating and Snaith \cite[eq.~ (97)]{KS}, who conjectured a formula for all moments. They made analogous conjectures for families of $L$-functions in \cite{KS2}.

\smallskip

Though the statement of our first result is very similar to Goldston's theorem, the proofs are considerably different. One reason for this is easy to explain. From the formula for $N(t)$ in \eqref{N(T)definition}, we see that the function $S(t)$ is bounded near the zeros of $\zeta(\frac{1}{2}+it)$, with a jump discontinuity at each zero. On the other hand, $\log|\zeta(\frac{1}{2}+it)|$ is not bounded near the zeros, and can be arbitrarily large in the negative direction. These logarithmic singularities do not substantially change the end result, but they do cause technical difficulties within the proof. Another major difference from Goldston's work is that our proof relies on a delicate cancellation of main terms, which we accomplish by introducing the function $g(x)$ in Section \ref{sec:representation}. Though an analogous cancelation of main terms was not present in Goldston's work, it is present in the work of Chirre and the third author who introduced related functions to obtain similar cancellations in the study of the second moment of the iterated antiderivatives of $S(t)$ (see \cite[Lemma 4]{CQ-H2}). However, as we shall see, when considering $\log|\zeta(\frac{1}{2}+it)|$ there are important new technical differences in the properties of our functions due to the unbounded discontinuities.  

\smallskip

{\sc Remark}. The error term in Theorem \ref{thm:variance} could be improved using additional assumptions such as a quantitative form of the twin prime conjecture (see \cite{Ch2}), or a more precise conjectural formula for pair correlation by Bogomolny and Keating \cite{BK} or Conrey and Snaith \cite{CoSn}. For details, see the work of Chan \cite{Ch1}. 

\subsection{Number variance of zeta zeros}\label{NV} In a series of papers, Fujii \cite{Fu0,Fu1,Fu3} considered the $2k$th moments of the difference $S\! \left(t+\Delta\right)-S(t)$.  Using Selberg's methods, for $T$ sufficiently large, Fujii \cite{Fu0} showed unconditionally that 
\begin{equation}\label{FujiiMoments1}
\begin{split}
\int \limits_0^T &\left[S\!\left(t+\Delta\right)-S(t)\right]^{2k} \d t  \, 
=\,\frac{(2k)!}{(2\pi)^{2k}k!} T(2\log(2 + \Delta\log T))^k + O\!\left( T \, (\log(2 + \Delta\log T))^{k-1/2}\right)
\end{split}
\end{equation}
when $0< \Delta \ll 1$ and, assuming RH, Fujii \cite{Fu3} showed that 
\begin{equation}\label{FujiiMoments2}
\begin{split}
\int \limits_0^T \left[S\!\left(t+\Delta\right)-S(t)\right]^{2k} \d t  \, 
&=\,\frac{(2k)! 2^k}{(2\pi)^{2k}k!} T \big( \log\log T - \log|\zeta(1+i\Delta)| \big)^k + O\Big( T \big( \log\log T \big)^{k-1} \Big)
\end{split}
\end{equation}
when $1 \le \Delta \le T$. Fujii's result in \eqref{FujiiMoments1} gives an asymptotic formula when $\Delta \log T$ goes to infinity with $T$ (sufficiently slowly). 
 If $\Delta \log T \ll 1$, then the main term and error term in this result are the same order of magnitude and this result does not give an asymptotic formula. Fujii's result in \eqref{FujiiMoments2} has an error term of the same order of magnitude as Selberg's conditional result in \eqref{SelbergS(T)moments}. In particular, when $k=1$, the error term in \eqref{FujiiMoments2} is $O(T)$. In this case, similarly to Goldston's result for the second moment of $S(t)$, the contribution from the zeros of $\zeta(s)$ gives a term of size $T$, and Selberg's method cannot be used be used to identify this main term. Realizing this, Fujii \cite{Fu1} applies Goldston's methods  \cite{Go} to his own work and, assuming RH, he shows that for $0<\Delta = o(1)$ we have
\begin{equation}\label{FujiiResult}
   \!\int\limits_0^T \!\left[S\!\left(t+\Delta\right)-S(t)\right]^2 \d t =
    \frac{T}{ \pi^2}\left\{
    \int\limits_{0}^{1} \frac{1-\cos (\alpha\Delta\log T )}{\alpha}\, \d \alpha 
    +
    \int\limits_{1}^{\infty} \frac{F(\alpha) \, [1-\cos(\alpha\Delta\log T)]}{\alpha^2}\, \d \alpha \right\} + o(T),
\end{equation}
as $T\to\infty$. Gallagher and Mueller \cite{GaMu} had previously given a similar estimate in the limited range $\Delta \asymp \tfrac{1}{\log T}$ assuming both RH and Montgomery's pair correlation conjecture. A calculation related to \eqref{FujiiResult} can also be found in recent work of Heap \cite[Proposition 9]{WH}. Notice that the expression of the main term in \eqref{FujiiResult} is stated using information from the zeros, in the form of $F(\alpha)$. As we shall see, more information about the distribution of the zeros of $\zeta(s)$ is required in order to  accurately describe the situation when $\Delta \gg 1$.

\smallskip

Our next results 
refine Fujii's calculation in \eqref{FujiiResult} by giving an asymptotic formula of similar precision but with a much larger range of $\Delta$. This requires expressing the main term in a different manner, giving a better understanding of the behavior of the number variance for zeta zeros for different sizes of $\Delta$. To achieve this, we must overcome significant technical challenges, as new main terms arise and a more careful consideration of the error terms is required. Our result relies on finer information from both the primes and the zeros of $\zeta(s)$. In particular, we require a variation of Montgomery's function $F(\alpha)$ introduced by Chan \cite{Ch} in his study of the pair correlation of zeros in {\it longer ranges}. We define\footnote{\,\,\,Chan works only with the real part of $F_\Delta$.}
\begin{equation}\label{eq:F_delta_def}
    F_\Delta(\alpha)=F_\Delta(\alpha,\,T):= \frac{2\pi}{T\log T}\sum_{0<\gamma, \, \gamma'\le T}T^{i\alpha\left(\gamma-\gamma'-\Delta\right)} \, w\!\left(\gamma-\gamma'-\Delta\right), 
\end{equation}
and we prove the following theorem. 
\begin{theorem}\label{thm:short-intervals-keating}
Assume RH and let $0<\Delta = o(\log^2 T)$.
Then, as $T\to\infty,$
\begin{equation*}
\begin{split}
   \int\limits_0^T \left[S\!\left(t+\Delta\right)-S(t)\right]^2 \d t &=
    \frac{T}{\pi^2} \, \Bigg\{
    \frac{1}{2i}\int\limits_0^{\Delta}\left(\frac{\zeta'}{\zeta}(1+i t)-\frac{\zeta'}{\zeta}(1-i t)-\frac{2i\cos(t\log T)}{t}\,\right) \d t +  \widetilde{C}(\Delta)
    \\
    &\qquad\qquad\qquad + \frac{1}{2}\int\limits_{1}^{\infty} \frac{2F(\alpha) -F_{\Delta}(\alpha) -F_{-\Delta}(\alpha)}{\alpha^2}\, \d \alpha 
   \Bigg\}
    + o(T)
\end{split}
\end{equation*}
and
\begin{equation*}
\begin{split}
    \int\limits_0^T \left[\log \left|\zeta\!\left(\tfrac{1}{2}+i t+i\Delta\right)\right|-\log \left|\zeta\!\left(\tfrac{1}{2}+i t\right)\right|\right]^2 \d t 
    &
    =
    T \, \Bigg\{
    \frac{1}{2i}\int\limits_0^{\Delta}\left(\frac{\zeta'}{\zeta}(1+i t)-\frac{\zeta'}{\zeta}(1-i t)-\frac{2i\cos(t\log T)}{t}\,\right) \d t\\  
    &  \qquad\quad+ \widetilde{C}(\Delta)
     + \frac{1}{2} \int\limits_{1}^{\infty} \frac{2F(\alpha) -F_{\Delta}(\alpha) -F_{-\Delta}(\alpha)}{\alpha^2}\, \d \alpha 
   \Bigg\}
    + o(T),
\end{split}
\end{equation*}
where
\begin{equation}\label{eq:c-tilde-def}
    \widetilde{C}(\Delta) = 
    \sum_{\substack{m\ge 2}} \sum_p
    \left(\frac{1}{m^2}-\frac{1}{m}\right)\frac{1}{p^m}\Big(1-\cos\left(\Delta m\log p \right)\Big).
\end{equation}
\end{theorem}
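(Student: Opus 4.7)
The plan is to extend the methods of Goldston \cite{Go} (used by Fujii in the range $\Delta = o(1)$ to establish \eqref{FujiiResult}) to the non-universal regime $\Delta = o(\log^2 T)$, combining a Selberg-type explicit representation of $\log\zeta(s)$ with the refined information on the zeros encoded by the functions $F(\alpha)$ and $F_\Delta(\alpha)$. The starting point is an identity of the form
\[
\log\zeta\!\left(\tfrac{1}{2}+it\right) = P_X(t) + E_X(t),
\]
where $P_X$ is an explicit sum over prime powers up to a parameter $X = X(T)$ and $E_X$ is a remainder expressible as a sum over the non-trivial zeros $\rho = \tfrac12+i\gamma$; taking real and imaginary parts gives parallel representations for $\log|\zeta(\tfrac12+it)|$ and $\pi S(t)$.

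The core computation is to decompose
\[
\pi\bigl[S(t+\Delta)-S(t)\bigr] = \bigl[P_X(t+\Delta)-P_X(t)\bigr] + \bigl[E_X(t+\Delta)-E_X(t)\bigr],
\]
square, and integrate over $[0,T]$, producing a \emph{prime-prime} term, a \emph{zero-zero} term, and a \emph{cross} term. The Montgomery-Vaughan mean-value inequality controls off-diagonal prime contributions, while the prime diagonal yields
\[
\tfrac{2T}{\pi^2}\sum_{n\ge 2}\tfrac{\Lambda(n)^2}{n\log^2 n}\bigl(1-\cos(\Delta\log n)\bigr).
\]
Splitting prime powers via $\tfrac{1}{m^2}=\tfrac{1}{m}+(\tfrac{1}{m^2}-\tfrac{1}{m})$ reassembles (together with the prime pieces of the cross term) the $\tfrac{\zeta'}{\zeta}(1\pm it)$ integrand together with the constant $\widetilde{C}(\Delta)$. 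The subtracted kernel $2i\cos(t\log T)/t$ is precisely what is needed to cancel the pole of $\tfrac{\zeta'}{\zeta}$ at $s=1$ so that the full $t$-integral is finite; its appearance arises from carefully calibrating $X$ against the oscillation in $\Delta$, and it reflects how the boundary of the zero-zero integral at $\alpha\to 1^+$ interacts with the prime-side contribution.

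For the zero-zero term, opening the square produces sums of the form $\sum_{\gamma,\gamma'} K_\Delta(\gamma-\gamma')$ against a Cauchy-type kernel naturally built from $w(u)=4/(4+u^2)$. Plancherel/Fourier inversion converts these into an integral in $\alpha$ involving $F(\alpha,T)$ and $F_\Delta(\alpha,T)$, and the identity $|e^{i\gamma\Delta}-1|^2 = 2 - e^{i\gamma\Delta} - e^{-i\gamma\Delta}$ produces exactly the combination $2F(\alpha)-F_\Delta(\alpha)-F_{-\Delta}(\alpha)$ in the main term. Crucially, because $\Delta$ is allowed as large as $o(\log^2 T)$, one cannot Taylor expand $F_\Delta$ around $\Delta=0$ as in \cite{Fu1}; the function $F_\Delta$ must be carried through as an independent object, which is precisely what makes the non-universal regime accessible.

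The main obstacle is twofold. First, the enlarged range $\Delta = o(\log^2 T)$ forces a careful \emph{identification} of genuine $\Delta$-dependent main terms rather than absorbing them into an error, and the clean form with the $\tfrac{\zeta'}{\zeta}(1\pm it) - 2i\cos(t\log T)/t$ integrand only emerges after a delicate cancellation among the prime diagonal, the off-diagonal prime contribution, and the cross term at the correctly tuned scale $X$. Second, the $\log|\zeta|$ estimate requires new input beyond the $S(t)$ argument because $\log|\zeta(\tfrac12+it)|$ has logarithmic singularities at the zeros; following the strategy of Chirre and the third author \cite{CQ-H2}, the auxiliary function $g(x)$ introduced in Section \ref{sec:representation} is designed so that the divergent main terms arising from these singularities cancel against corresponding contributions from the zero-zero term, a cancellation that is not present in Goldston's $S(t)$ analysis and is the principal technical novelty for the second statement of the theorem.
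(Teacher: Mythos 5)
Your overall strategy coincides with the paper's at the structural level: an explicit-formula representation of $\log\zeta$ as a prime polynomial plus a sum over zeros, squaring the difference and splitting into prime--prime, cross, and zero--zero terms, Montgomery--Vaughan for the prime pairs, Fourier inversion turning the zero pairs into the combination $2F(\alpha)-F_\Delta(\alpha)-F_{-\Delta}(\alpha)$, and a cancellation mediated by the auxiliary function $g$ in the $\log|\zeta|$ case. However, as written there are genuine gaps. First, you keep the zero--zero contribution only as the integral over $\alpha>1$, but a substantial part of the main term comes from $0\le\alpha\le1$, where one must insert the proven asymptotic \eqref{eq:F_delta_asymptotic} for $F_\Delta$ (Chan's method combined with Goldston--Montgomery). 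It is precisely this piece that cancels the cutoff-dependent terms from the prime side (the $\int_0^1 v\,g^2(v)\cdots\,\d v$ and $\log\beta$ terms in Lemma \ref{lem:r}(b) against those in Lemma \ref{lem:h+g}(b)) and produces the effective cutoff at $T$; without evaluating this range you can neither assemble the stated main term nor see why the answer is independent of the truncation parameter $x=T^\beta$. Second, your account of the term $2i\cos(t\log T)/t$ is not correct: it does not arise from ``calibrating $X$ against the oscillation in $\Delta$'' or from the boundary $\alpha\to1^+$ of the zero--zero integral. The theorem is first proved in the form of Theorem \ref{thm:short-intervals-berry}, with the prime sum cut at $T$, and the $\cos(t\log T)/t$ term is simply the truncation remainder $x^{-u}/u$ (with $x=T$, $u=\pm it$) in the classical formula of Lemma \ref{lem:perron} used to rewrite $\sum_{n\le T}\frac{\Lambda(n)}{n\log n}\big(1-\cos(\Delta\log n)\big)$ as the $\zeta'/\zeta$ integral; that it regularizes the singularity at $t=0$ is a consequence, not the mechanism.

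Two further steps are missing. The cross term is not evaluated by pairing the prime polynomial against the zero sum: the paper pairs it against $\log|\zeta(\tfrac12+it)|$ itself and uses Goldston's contour estimate for $\int_0^T\log\zeta(\tfrac12+it)\,n^{\pm it}\,\d t$ (Lemma \ref{lem:log_zeta_times_trig}); some such input is indispensable, and your sketch supplies none. Relatedly, your claimed prime diagonal $\tfrac{2T}{\pi^2}\sum_n\frac{\Lambda^2(n)}{n\log^2 n}(1-\cos(\Delta\log n))$ has the wrong constant and drops the smoothing weight: the diagonal carries $f^2$, the cross term contributes $2f$, and it is the combination $2f-f^2=1-(1-f)^2$ together with $(1-f(u))^2=u^2g(u)^2$ that brings $g$ into play and, after cancellation against the zero side, yields the weight-one sum in Theorem \ref{thm:short-intervals-berry}. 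Finally, for the $\log|\zeta|$ statement the kernel attached to the zeros has an unbounded (though integrable) singularity at the origin, and the paper must work along a sequence $T_n$ avoiding ordinates of zeros as in \eqref{TnSequence} (Lemma \ref{lem:unbounded_discontinuities}) before extending to all $T$; the role of $g$ is to cancel cutoff-dependent main terms between the prime and zero sides, not to cancel ``divergent main terms arising from the singularities.'' These are not cosmetic omissions: each is needed to reach an error term $o(T)$ uniformly in the stated range $\Delta=o(\log^2 T)$.
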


\smallskip

We highlight that there is new input from the zeros contained in the function $F_\Delta$, and new input from the primes codified in the integral of the logarithmic derivatives of $\zeta(s)$ on the line $\re s = 1$. The integral involving $F_\Delta$ is convergent and remains bounded as $T\to\infty$ (see Lemma \ref{prop:BoundedOnAverage}). Conceivably, Theorem \ref{thm:short-intervals-keating} continues to hold in a much longer range of $\Delta$. 
We give two alternative formulations of this theorem. Our first reformulation better illustrates the connection to Fujii's previous result in \eqref{FujiiResult}.

\begin{theorem}\label{thm:short-intervals}
Assume RH and let $0<\Delta = o(\log^2 T)$. For $y\ge 1,$ define 
\begin{equation}\label{eq:E_def}
    E(y)=\sum_{n\le y}\Lambda^2(n)- y\log y+y.
\end{equation}
Then, as $T\to\infty,$
\begin{equation*}
\begin{split}
    \int\limits_0^T \left[S\!\left(t+\Delta\right)-S(t)\right]^2 \d t &=
    \frac{T}{\pi^2} \left\{
    \int\limits_{0}^{1} \frac{1-\cos (\Delta\alpha \log T )}{\alpha}\, \d \alpha
    + \frac{1}{2}\int\limits_{1}^{\infty} \frac{2F(\alpha) -F_{\Delta}(\alpha) -F_{-\Delta}(\alpha)}{\alpha^2}\, \d \alpha 
   \right\}\\
    &\qquad \qquad + \frac{T}{\pi^2}  \,c\!\left(\Delta\right)
    + o(T)
\end{split}
\end{equation*}
and
\begin{equation*}
\begin{split}
    \int\limits_0^T \Bigg[\log \left|\zeta\!\left(\tfrac{1}{2}+i t+i\Delta\right)\right|-\log& \left|\zeta\!\left(\tfrac{1}{2}+i t\right)\right| \Bigg] ^2 \d t \\
   &=T\left\{ \int\limits_{0}^{1} \frac{1-\cos (\Delta \alpha\log T )}{\alpha}\, \d \alpha
    + \frac{1}{2}\int\limits_{1}^{\infty} \frac{2F(\alpha) -F_{\Delta}(\alpha) -F_{-\Delta}(\alpha)}{\alpha^2}\, \d \alpha 
   \right\}\\
    &\qquad \qquad  + T\,c\!\left(\Delta\right)
    + o(T),
\end{split}
\end{equation*}
where
\begin{equation}\label{eq:c-function}
    c(v):=\int\limits_1^\infty \frac{E(y)}{y^2\log^3 y}\left[-v\log y\,  \sin\!\left(v\log y\right) + \sin^2\!\left(\frac{v\log y}{2}\right)\left(\log y + 2 \right)\right] \d y -\frac{v^2}{2}. 
\end{equation}
\end{theorem}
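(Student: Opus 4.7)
The plan is to derive Theorem \ref{thm:short-intervals} as a direct corollary of the already established Theorem \ref{thm:short-intervals-keating}. The left-hand sides of the two results are literally identical, and the $F$ and $F_\Delta$ pieces on the two right-hand sides appear with the same coefficients. So the task reduces to showing that the remaining (essentially $T$-dependent) pieces agree:
\begin{equation*}
\frac{1}{2i}\int_0^\Delta\!\left[\frac{\zeta'}{\zeta}(1+it)-\frac{\zeta'}{\zeta}(1-it)-\frac{2i\cos(t\log T)}{t}\right]dt+\widetilde{C}(\Delta)=\int_0^1\frac{1-\cos(\Delta\alpha\log T)}{\alpha}\,d\alpha+c(\Delta).
\end{equation*}

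First I would strip off the $T$-dependence. Writing $\zeta'/\zeta(1+it)-\zeta'/\zeta(1-it)=2i\,\Im(\zeta'/\zeta)(1+it)$ (using that $\zeta'/\zeta$ has real Dirichlet coefficients) and splitting $-2i\cos(t\log T)/t=-2i/t+2i(1-\cos(t\log T))/t$ so that the $-2i/t$ piece pairs with the simple pole of $\zeta'/\zeta$ at $s=1$, the left-hand side becomes
\begin{equation*}
\int_0^\Delta\!\left[\Im(\zeta'/\zeta)(1+it)-\frac{1}{t}\right]dt+\int_0^\Delta\frac{1-\cos(t\log T)}{t}\,dt+\widetilde{C}(\Delta).
\end{equation*}
The substitution $u=t\log T$ identifies the middle integral with $\int_0^1(1-\cos(\Delta\alpha\log T))/\alpha\,d\alpha$, precisely the $T$-dependent term on the right. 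The problem is thus reduced to the $T$-independent identity $\int_0^\Delta[\Im(\zeta'/\zeta)(1+it)-1/t]\,dt+\widetilde{C}(\Delta)=c(\Delta)$.

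To evaluate the remaining integral I would use $\tfrac{d}{dt}\log|\zeta(1+it)|=-\Im(\zeta'/\zeta)(1+it)$ together with the Laurent expansion $\zeta(1+it)=1/(it)+\gamma_0+O(t)$ near $t=0$, which gives $\log|\zeta(1+it)|+\log t\to 0$ as $t\to 0^+$. This yields the closed-form evaluation
\begin{equation*}
\int_0^\Delta\!\left[\Im(\zeta'/\zeta)(1+it)-\frac{1}{t}\right]dt=-\log|\Delta\,\zeta(1+i\Delta)|,
\end{equation*}
so it suffices to verify $-\log|\Delta\zeta(1+i\Delta)|+\widetilde{C}(\Delta)=c(\Delta)$. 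I would expand $-\log|\zeta(1+i\Delta)|=-\sum_p\sum_{m\ge 1}\cos(\Delta m\log p)/(mp^m)$ via the Euler product; adding $\widetilde{C}(\Delta)$ re-combines the $m\ge 2$ cosines with coefficient $-1/m+(1/m^2-1/m)=-1/m^2$ and produces an absolute constant $\sum_p\sum_{m\ge 2}(1/m^2-1/m)/p^m$. The remaining prime piece $-\log\Delta-\sum_p\cos(\Delta\log p)/p$ is then converted to the $E(y)$-integral form by writing the prime sum as $\sum_{n\ge 2}\Lambda^2(n)\cos(\Delta\log n)/(n\log^2 n)$ plus higher-prime-power terms (which absorb the constant above), applying Abel summation against $d\psi_2(y)=\log y\,dy+dE(y)$, and integrating by parts to transfer the derivative onto the smooth kernel $\cos(\Delta\log y)/(y\log^2 y)$. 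A direct computation of that derivative produces the specific combination $[-v\log y\sin(v\log y)+\sin^2(v\log y/2)(\log y+2)]/(y^2\log^3 y)$ appearing in the definition of $c(v)$, while the $-v^2/2$ counterterm arises as the finite-part regularization of the divergent $\int\cos(\Delta\log y)/(y\log y)\,dy$ matched against $-\log\Delta$.

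The main obstacle will be the delicate regularization: $\sum_p\cos(\Delta\log p)/p$ is only conditionally convergent, and both this prime sum and the $E(y)$-integral contain logarithmic divergences near their base points that must cancel exactly against $-\log\Delta$. This bookkeeping is routine but fiddly, and requires PNT-type bounds on $E(y)$ (such as $E(y)\ll y\exp(-c\sqrt{\log y})$) to justify interchanging summation with integration and to control the boundary contributions at $y=1$ and $y=\infty$ in the integration-by-parts step; an alternative route would avoid the intermediate quantity $-\log|\Delta\zeta(1+i\Delta)|$ by working directly with the Stieltjes integral representation of $\zeta'/\zeta(1+it)$ against $d\psi_2(y)$, at the cost of slightly heavier computation.
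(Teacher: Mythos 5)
Your proposal does not prove Theorem \ref{thm:short-intervals}; it only shows that Theorem \ref{thm:short-intervals} is equivalent to Theorem \ref{thm:short-intervals-keating}, and in this paper that equivalence cannot be used in the direction you want without circularity. The paper's logical order is the opposite of the one you assume: Theorem \ref{thm:short-intervals} is proved first, directly, by inserting part (b) of Lemmas \ref{lem:r} and \ref{lem:h+g} into part (b) of Lemma \ref{lem:represFinal}; Theorem \ref{thm:short-intervals-berry} is then deduced from it via \eqref{eq:l12-13}, and Theorem \ref{thm:short-intervals-keating} is in turn deduced from Theorem \ref{thm:short-intervals-berry} via Lemma \ref{lem:perron}. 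So ``the already established Theorem \ref{thm:short-intervals-keating}'' is not available to you; taking it as the starting point assumes exactly the substantive content to be proved. All of the real work is bypassed in your sketch: the representation formula for $\log|\zeta(\tfrac12+it)|$ (Lemmas \ref{lem:represLogZeta} and \ref{lem:represFinal}), the contribution of the zeros $R_2$ --- including the unbounded singularity of $h$, the restriction to the sequence $\{T_n\}$, the insertion of the weight $w$, and the evaluation through $F(\alpha)$ and Chan's $F_\Delta(\alpha)$ using \eqref{eq:F_asymptotic} and \eqref{eq:F_delta_asymptotic} (Lemmas \ref{lem:unbounded_discontinuities}--\ref{lem:r}) --- the prime-side estimate of $G_2+H_2$, whose summation by parts against $E(y)$ is where $c(\Delta)$ actually originates (Lemma \ref{lem:h+g}, which in turn uses Theorem \ref{thm:variance} through Lemma \ref{lem:H}), and finally the cancellation of the $\beta$-dependence and the passage from $T\in\{T_n\}$ to all $T$ via Lemma \ref{lem:a(T)}. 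None of this is recoverable from the equivalence argument alone, so as a proof of the stated theorem the proposal has a fatal gap unless you supply an independent proof of Theorem \ref{thm:short-intervals-keating}, which would amount to redoing the above.

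For what it is worth, the equivalence computation itself is correct and is essentially the paper's own bridge between the three formulations, run in reverse: the evaluation $\int_0^\Delta\bigl[\Im\tfrac{\zeta'}{\zeta}(1+it)-\tfrac1t\bigr]\,\d t=-\log|\Delta\,\zeta(1+i\Delta)|$ is right, the substitution $u=t\log T$ correctly produces $\int_0^1\frac{1-\cos(\Delta\alpha\log T)}{\alpha}\,\d\alpha$, and the remaining $T$-independent identity $-\log|\Delta\,\zeta(1+i\Delta)|+\widetilde{C}(\Delta)=c(\Delta)$ is exactly what the paper encodes in \eqref{eq:primes1}--\eqref{eq:primes2} together with \eqref{eq:l12-13} (including the appearance of $-\Delta^2/2$ as the $\ell\to1^+$ boundary term). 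So your outline would serve to pass between Theorems \ref{thm:short-intervals-keating} and \ref{thm:short-intervals} once one of them is established, but it does not establish either.
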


Using Theorem \ref{thm:short-intervals-keating}, the function $c(v)$ can also be written in terms of the Taylor series expansion of 
\[
\frac{\zeta'}{\zeta}(1+i t)-\frac{\zeta'}{\zeta}(1-i t)+\frac{2}{it}
\]
about $t=0$.
Note that, for $1\le y<2$, we have  $E(y)=-y\log y + y$, and the prime number theorem (unconditionally) implies that $E(y)=O_N(y/\log^N y)$, as $y\to \infty$, for any $N>0$. These two facts, together with the inequality $|\sin x|\le |x|$, imply that $c(v)$ is well-defined and that $c(v)\ll v^2$, for all $v\ge 0$. In particular, if $\Delta=o(1)$, as in Fujii's case in \eqref{FujiiResult}, then the term $T\,c\left(\Delta\right)=o(T)$ and can be absorbed into the error term. Moreover, when $\Delta=o(1)$, we show that this integral reduces to the analogous term in \eqref{FujiiResult} involving $F(\alpha)$ (see Section \ref{sec:transition}), recovering Fujii's result in this range. This reduction, while based on simple ideas, is quite subtle, and requires another technical but straightforward modification of Montgomery's theorem for $F(\alpha)$ to control some of the error terms. \smallskip

As explained above for Theorem \ref{thm:variance}, the proofs for the imaginary and real parts of $\log \zeta(\frac{1}{2}+it)$ are similar, but the proof for the real part is significantly more difficult. For this reason, we give the details only for the latter. Although we present the main steps of the proofs of Theorem \ref{thm:variance} and Theorem \ref{thm:short-intervals} in parallel, it is important to note that the proof of Theorem \ref{thm:variance} is independent of the proof of Theorem \ref{thm:short-intervals}. Additionally, we will use Theorem \ref{thm:variance} to control some of the error terms in some steps for Theorem \ref{thm:short-intervals} (see Lemma \ref{lem:H}  
below). \smallskip

Our second reformulation of Theorem \ref{thm:short-intervals-keating}  illustrates the input from the primes and the zeros in a simpler way. As we shall see in the next section, this has the advantage of allowing for a simple comparison with a conjecture of Berry \cite{Be}.
\begin{theorem}\label{thm:short-intervals-berry}
Assume RH and let $0<\Delta = o(\log^2 T)$.
Then, as $T\to\infty,$
\begin{equation*}
\begin{split}
   \int\limits_0^T \left[S\!\left(t+\Delta\right)-S(t)\right]^2 \d t &=
    \frac{T}{\pi^2} \, \Bigg\{
    \sum\limits_{n \leq T}\frac{\Lambda^2(n)}{n \log^2 n}\left(1-\cos\!\left(\Delta \log n \right)\right)\\
    &\qquad\qquad + \frac{1}{2}\int\limits_{1}^{\infty} \frac{2F(\alpha) -F_{\Delta}(\alpha) -F_{-\Delta}(\alpha)}{\alpha^2}\, \d \alpha 
   \Bigg\}
    + o(T)
\end{split}
\end{equation*}
and
\begin{equation*}
\begin{split}
    \int\limits_0^T \left[\log \left|\zeta\!\left(\tfrac{1}{2}+i t+i\Delta\right)\right|-\log \left|\zeta\!\left(\tfrac{1}{2}+i t\right)\right|\right]^2 \d t 
    &
    =
    T \, \Bigg\{
    \sum\limits_{n \leq T}\frac{\Lambda^2(n)}{n \log^2 n}\left(1-\cos\!\left(\Delta \log n \right)\right)\\
    &\qquad\quad + \frac{1}{2}\int\limits_{1}^{\infty} \frac{2F(\alpha) -F_{\Delta}(\alpha) -F_{-\Delta}(\alpha)}{\alpha^2}\, \d \alpha 
   \Bigg\}
    + o(T).
\end{split}
\end{equation*}
\end{theorem}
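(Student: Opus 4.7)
The plan is to derive Theorem~\ref{thm:short-intervals-berry} directly from Theorem~\ref{thm:short-intervals-keating} by identifying its ``prime input''
\[
\mathcal{P}(\Delta,T) := \frac{1}{2i}\int_0^{\Delta}\left(\frac{\zeta'}{\zeta}(1+it)-\frac{\zeta'}{\zeta}(1-it)-\frac{2i\cos(t\log T)}{t}\right)\d t + \widetilde{C}(\Delta)
\]
with the truncated prime-power sum
\[
\mathcal{D}(\Delta,T) := \sum_{n\le T}\frac{\Lambda^2(n)\bigl(1-\cos(\Delta\log n)\bigr)}{n\log^2 n},
\]
up to an additive error of size $o(1)$; after multiplication by $T/\pi^2$, this is absorbed into the $o(T)$ of Theorem~\ref{thm:short-intervals-keating}. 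Since the two formulations differ only in their prime contribution, establishing $\mathcal{P}(\Delta,T) = \mathcal{D}(\Delta,T) + o(1)$ proves Theorem~\ref{thm:short-intervals-berry} simultaneously for both $S(t+\Delta)-S(t)$ and $\log|\zeta(\tfrac{1}{2}+it)|$.

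The first step is a truncated Perron-type identity for the oscillating von Mangoldt sum. Starting from the Mellin representation
\[
\sum_{n\le T}\frac{\Lambda(n)}{n^{1+it}} = \frac{1}{2\pi i}\int_{(c)}\Bigl(-\frac{\zeta'}{\zeta}(s+1+it)\Bigr)\frac{T^s}{s}\,\d s \qquad (c>0)
\]
and shifting the contour to $\Re s = -\tfrac{1}{2}+\epsilon$, under RH one crosses exactly two residues: at $s=0$, contributing $-\frac{\zeta'}{\zeta}(1+it)$, and at $s=-it$ coming from the pole of $\zeta'/\zeta$ at $s=1$, contributing $iT^{-it}/t$. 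Taking imaginary parts, using $\overline{\zeta'/\zeta(1+it)}=\zeta'/\zeta(1-it)$, and integrating from $0$ to $\Delta$ with the elementary identity $\int_0^\Delta\sin(t\log n)\,\d t=(1-\cos(\Delta\log n))/\log n$, one concludes
\[
\sum_{n\le T}\frac{\Lambda(n)\bigl(1-\cos(\Delta\log n)\bigr)}{n\log n} = \mathcal{P}(\Delta,T)-\widetilde{C}(\Delta)+o(1),
\]
using that the $1/t$-singularities at $t=0$ of $\zeta'/\zeta(1\pm it)$ and of $\cos(t\log T)/t$ cancel in the combination defining $\mathcal{P}$, and that standard estimates for $\zeta'/\zeta$ on vertical lines under RH make the shifted contour contribute $O(T^{-1/2+\epsilon}\log^3 T)$ pointwise, giving total error $o(1)$ after integration since $\Delta=o(\log^2 T)$.

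The second step converts the $\Lambda$-sum into a $\Lambda^2$-sum by term-by-term comparison. On primes, $\Lambda(p)/(p\log p)=\Lambda^2(p)/(p\log^2 p)=1/p$, so those contributions agree. On prime powers $n=p^m$ with $m\ge 2$,
\[
\frac{\Lambda(p^m)}{p^m\log p^m}-\frac{\Lambda^2(p^m)}{p^m\log^2 p^m} = \Bigl(\frac{1}{m}-\frac{1}{m^2}\Bigr)\frac{1}{p^m},
\]
which, weighted by $(1-\cos(\Delta m\log p))$ and summed over $m\ge 2$ and all primes $p$, equals exactly $-\widetilde{C}(\Delta)$ by \eqref{eq:c-tilde-def}. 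Extending the truncation $p^m\le T$ to the full sum over prime powers introduces only a tail $\sum_{m\ge 2}\sum_{p>T^{1/m}}p^{-m}\ll T^{-1/2}=o(1)$. Combining this with the previous step yields $\mathcal{D}(\Delta,T)=\mathcal{P}(\Delta,T)+o(1)$, completing the proof.

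The main technical hurdle is the uniform treatment of the contour shift for $t$ near $0$, where both the residue $iT^{-it}/t$ and the pole of $\zeta'/\zeta$ at $s=1$ are individually singular although their combination is not. I would handle this via a regularization $[\eta,\Delta]$ with $\eta\to 0^+$: the identity is proved uniformly on $[\eta,\Delta]$ with total error $O(T^{-1/2+\epsilon}\log^5 T)=o(1)$, while the missing interval $[0,\eta]$ contributes $O(\eta^2\log^2 T)$ on both sides (since the combined integrand of $\mathcal{P}$ and the partial $\Lambda$-sum are each bounded by $O(t\log^2 T)$ near $0$, the leading $1/t$ terms cancelling), which is also $o(1)$ for any $\eta=o(1/\log T)$.
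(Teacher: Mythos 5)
Your reduction establishes the equivalence of the two ``prime inputs'' correctly: the identity $\mathcal{P}(\Delta,T)=\mathcal{D}(\Delta,T)+o(1)$ that you prove via the truncated Perron formula for $-\tfrac{\zeta'}{\zeta}(1+it)$, the cancellation of the $1/t$ singularities at $t=0$, and the prime--prime-power comparison producing $\widetilde{C}(\Delta)$, is precisely the content of Lemma \ref{lem:perron} together with \eqref{eq:primes1} and \eqref{eq:primes2} in the paper, and your error estimates (the $O(\Delta\log^2T/\sqrt{T})$ from the contour shift, the $O(T^{-1/2})$ tail over prime powers) are sound in the range $\Delta=o(\log^2 T)$.

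The genuine gap is logical, not computational: you take Theorem \ref{thm:short-intervals-keating} as a black box, but in the paper Theorem \ref{thm:short-intervals-keating} is itself \emph{deduced from} Theorem \ref{thm:short-intervals-berry} by exactly the ingredients you use (Lemma \ref{lem:perron}, \eqref{eq:primes1}, \eqref{eq:primes2}), so your argument is circular: it proves only that the two formulations are equivalent, and there is no independent source for the input you assume. The paper's actual route is the reverse one: Theorem \ref{thm:short-intervals} is proved from scratch by combining the representation formula (Lemma \ref{lem:represFinal}) with the zero contribution $R_2$ analyzed through $F(\alpha)$ and $F_\Delta(\alpha)$ (Lemmas \ref{lem:unbounded_discontinuities}--\ref{lem:r}) and the prime contribution $G_2+H_2$ (Lemmas \ref{lem:G}--\ref{lem:h+g}); Theorem \ref{thm:short-intervals-berry} then follows in one line from \eqref{eq:l12-13} with $\beta=1$, i.e.\ partial summation and the prime number theorem converting $\int_0^1\frac{1-\cos(\Delta\alpha\log T)}{\alpha}\,\d\alpha+c(\Delta)$ into $\sum_{n\le T}\frac{\Lambda^2(n)}{n\log^2 n}\big(1-\cos(\Delta\log n)\big)$ up to $O(T/\log T)$; and only afterwards is Theorem \ref{thm:short-intervals-keating} obtained from Theorem \ref{thm:short-intervals-berry}. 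To repair your proposal you would need to supply an independent proof of one of the three formulations --- which means engaging the core analytic work (the explicit formula with the weights $f,g,h$, the treatment of the unbounded singularity of $h$, Chan's function $F_\Delta$ and Lemma \ref{prop:BoundedOnAverage}, and the cancellation of main terms between $G_2+H_2$ and $R_2$) that your argument currently bypasses entirely.
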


\indent The proofs of all of our theorems rely on knowledge of $F(\alpha)$ and $F_{\Delta}(\alpha)$ for $|\alpha| \le 1$. It is known that $F(\alpha)$ is real-valued, positive, and even. Moreover, refining Montgomery's original work \cite{Mo}, Goldston and Montgomery \cite{GoMo} showed that
\begin{equation}\label{eq:F_asymptotic}
    F(\alpha) = \left(T^{-2\alpha}\log T + \alpha
    \right)(1+o(1)),
\end{equation}
uniformly for $0\le \alpha \le 1$. Here, the term of $o(1)$ is of size $O\Big(\sqrt{\tfrac{\log \log T}{\log T}}\Big).$ In contrast, the function $F_\Delta(\alpha)$ is no longer positive, nor real, nor even; however, it satisfies the symmetry relations
\begin{equation}\label{eq:symmetry_rels}
    \overline{F_\Delta(\alpha)}=F_\Delta(-\alpha)=F_{-\Delta}(\alpha).
\end{equation}
Combining the methods of Chan \cite[Theorem 1.1]{Ch} with Goldston-Montgomery \cite{GoMo}, it can be shown that 
\begin{equation}\label{eq:F_delta_asymptotic}
    F_\Delta(\alpha) = T^{-2\alpha}\log T + \alpha\,  w\!\left(\Delta\right)T^{- i\alpha\Delta} + O\!\left(\frac{1}{\sqrt{\log T}}\right) + O(T^{-2\alpha})+ O_\varepsilon\!\left(\frac{(\Delta+1)\,T^{-\alpha\left(\tfrac{1}{2}-\varepsilon\right)}}{\log T}\right),
\end{equation}
uniformly for $0\le \alpha \le 1$ and small $\varepsilon>0$.

\subsection{A conjecture of Berry}\label{BS}
The Hilbert-P\'olya conjecture states that the imaginary parts of the zeros of $\zeta(s)$ correspond to the eigenvalues of some self-adjoint operator, and this would imply RH. In 1973, as a consequence of his work on the pair correlation of zeros, Montgomery \cite{Mo} was led to conjecture that the zeros of $\zeta(s)$ are distributed as the eigenvalues of a random matrix from the Gaussian unitary ensemble (GUE), giving support to a spectral interpretation of the zeta zeros. Montgomery's conjecture is supported by numerical evidence of Odlyzko \cite{Od}, which suggests that the GUE model holds for short-range statistics between zeros, such as the distribution of the gap between consecutive zeros. However, Odlyzko's evidence shows that the GUE model fails for long-range statistics, such as the correlation between zeros that are very far apart. In this case, Berry \cite{Be} suggested that these long-range statistics are better described in terms of primes, instead of GUE statistics. \smallskip

Berry \cite{Be} proposed a conjectural model for the zeros of $\zeta(s)$ as the eigenvalues of a quantum Hamiltonian operator. His model is expected to conform to the behavior of both short-range and long-range statistics of zeros, as described above. In 1988, Berry used his model to conjecture an asymptotic formula (in terms of our notation) for 
\begin{equation}\label{eq:S_short_intervals}
  \int\limits_0^T \!\left[S\!\left(t+\tfrac{2\pi\delta}{\log T}\right)-S(t)\right]^2 \d t.  
\end{equation}
As described above, the universal regime of his model is when $\delta=o(\log T)$, while the non-universal regime corresponds to $\delta \gg \log T$. 
\smallskip

We first briefly describe Berry's conjecture following his notation. For $E>0$, define
\begin{equation*}
    \mathcal{N}(E):=\frac{E}{2\pi}\left(\log \frac{E}{2\pi}-1\right)+\frac{7}{8},
\end{equation*}
so that, by \eqref{N(T)definition}, we have $N(E)=\mathcal{N}(E)+S(E)+O\!\left(\frac{1}{E}\right)$. For $m\ge1,$ let $x_m=\mathcal{N}(\gamma_m)$ be the renormalized zeros of $\zeta(s)$, so that the sequence $x_m$ has average spacing 1. In what follows, we let $E$, $x$, and $\Delta x$ be three large parameters, which we think of as going to infinity, satisfying the relations $\Delta x = o(x)$ and $x=\mathcal{N}(E)\sim \frac{E}{2\pi}\log E$.
For $L<x$, let $n(L;x)$ be the number of renormalized zeros $x_m$ in the interval $\left[x-\frac{L}{2}, x+\frac{L}{2}\right]$. In particular, note that 
\begin{equation*}
    n(L;x)=L + S\!\left(\mathcal{N}^{-1}\!\left(x+\frac{L}{2}\right)\right)
    - S\!\left(\mathcal{N}^{-1}\!\left(x-\frac{L}{2}\right)\right) + O\!\left(\frac{\log x}{x}\right).
\end{equation*}
We define the variance as
\begin{equation*}
    V(L;x) = \big\langle[n(L;x)-L]^2\big\rangle:=\frac{1}{\Delta x}\int\limits_{x-\frac{\Delta x}{2}}^{x+\frac{\Delta x}{2}} [n(L;y)-L]^2 \, \d y.
\end{equation*}
Finally, we let $\tau^*$ be another parameter, such that  $\tau^*=\frac{\Phi(E)}{\log (E/2\pi)}$ for some function $\Phi(E)$ that goes to infinity as $E\to\infty$. Berry's conjectural formula \cite[Eq.  (19)]{Be} states that \begin{align*}
    V(L;x)\sim &
    \frac{1}{\pi^2}\left[
    \log (2\pi L ) -\textup{Ci}(2\pi L) -2\pi L\Si(2\pi L ) +\pi^2 L -\cos(2\pi L) + 1 +\gamma_0
    \right] \\
    &+ \frac{1}{\pi^2}\left[
    2\sum_{r=1}^\infty\sum_p^{p^r<(E/2\pi)^{\tau^*}} \frac{\sin^2(\pi L r \log p / \log (E/2\pi))}{r^2 p^r} +\textup{Ci}(2\pi L\tau^*) - \log (2\pi L\tau^*)-\gamma_0
    \right],
\end{align*}
as $E\to\infty$, where $\gamma_0$ is Euler's constant,
\begin{equation}\label{eq: Sin_Cos_Int}
    \Si(x):=\int\limits_0^x \frac{\sin u}{u} \, \d u, \ \text{ and } \ \textup{Ci}(x):=-\int\limits_x^\infty  \frac{\cos u}{u} \, \d u.
\end{equation}
The right-hand side does not depend on the choice of $\tau^*$, as $E\to\infty$. The universal regime is when $L=o( 1/\tau^*)$, where the term in the first set of brackets is the dominant (leading order) term. See also \cite[Eqs. (20) and (21)]{Be} for simplifications in different ranges of $L$. Translating to our normalization and our notation, Berry made the following conjecture. 

\begin{conjecture}[Berry, 1988]\label{con:berry}
Let $\delta>0$. Then, as $T\to\infty$, the following asymptotic formulae hold.
\begin{enumerate}
    \item [\textup{(}a\textup{)}] If $\delta = o(\log T)$, then 
\begin{equation*}
    \int\limits_0^T \!\left[S\!\left(t+\tfrac{2\pi\delta}{\log T}\right)-S(t)\right]^2 \d t = \frac{T}{\pi^2} \left[\log (2\pi \delta ) -\textup{Ci}(2\pi\delta) -2\pi\delta\Si(2\pi\delta ) +\pi^2\delta -\cos(2\pi\delta) + 1 +\gamma_0\right] + o(T). 
\end{equation*}
    \item [\textup{(}b\textup{)}]  If $\delta \gg \log T$, then
\begin{equation*}
    \int\limits_0^T \left[S\!\left(t+\tfrac{2\pi\delta}{\log T}\right)-S(t)\right]^2 \d t = \frac{T}{\pi^2} \left[
    \sum\limits_{n \leq T}\frac{\Lambda^2(n)}{n \log^2 n}\left(1-\cos\!\left(\frac{2\pi\delta\,\log n}{\log T}\right)\right)
    +1\right] + o(T). 
\end{equation*}
\end{enumerate}
\end{conjecture}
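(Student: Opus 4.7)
The plan is to derive Conjecture~\ref{con:berry} from Theorem~\ref{thm:short-intervals-berry} by specializing $\Delta = 2\pi\delta/\log T$ and then invoking Montgomery's strong pair correlation conjecture together with Chan's conjecture to evaluate
\begin{equation*}
I(\Delta,T) := \frac{1}{2}\int_1^\infty \frac{2F(\alpha) - F_\Delta(\alpha) - F_{-\Delta}(\alpha)}{\alpha^2}\, \d \alpha.
\end{equation*}
Under this substitution, the prime sum in Theorem~\ref{thm:short-intervals-berry} coincides verbatim with Berry's prime sum in case (b), and, as I will see, reduces to a standard closed form in case (a). All the remaining work is therefore in evaluating $I(\Delta,T)$ in each regime.

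In the universal regime (part (a), $\delta = o(\log T)$, so $\Delta = o(1)$), the conjecture is essentially already known from Gallagher--Mueller~\cite{GaMu} and Fujii~\cite{Fu1}; nevertheless I would reproduce it from Theorem~\ref{thm:short-intervals-berry}. Partial summation of the prime sum against the prime number theorem $\sum_{n\le y}\Lambda^2(n) = y\log y - y + O_N(y/\log^N y)$, followed by the substitution $v = \Delta \log y$, converts it to $\int_0^{2\pi\delta}(1-\cos v)/v\, \d v + o(1) = \log(2\pi\delta) + \gamma_0 - \textup{Ci}(2\pi\delta) + o(1)$. For $I(\Delta,T)$, the identity $w(u-\Delta) = w(u) + O(\Delta/(1+u^2))$ and factoring out $T^{\mp i\alpha\Delta}$ formally reduce $I(\Delta,T)$ to Fujii's integral $\int_1^\infty F(\alpha)(1-\cos(\alpha\Delta\log T))/\alpha^2\, \d \alpha$ from~\eqref{FujiiResult}; the delicate error control needed here is the modification of Montgomery's theorem indicated in Section~\ref{sec:transition}. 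Under Montgomery's conjecture the integral then reduces to $\int_1^\infty (1-\cos(2\pi\delta\alpha))/\alpha^2\, \d \alpha$, which a single integration by parts evaluates as $1 - \cos(2\pi\delta) + \pi^2\delta - 2\pi\delta\, \Si(2\pi\delta)$. Summing with the prime-sum contribution produces Berry's formula (a).

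In the non-universal regime (part (b), $\log T \ll \delta$, so $1 \ll \Delta$), the substantive claim is $I(\Delta,T) = 1 + o(1)$ in the range $1 \ll \Delta = o(\log^2 T)$ permitted by Theorem~\ref{thm:short-intervals-berry}. Montgomery's conjecture $F(\alpha)\sim 1$ on $[1,\infty)$ gives the $F$-contribution $\int_1^\infty 2/\alpha^2\, \d \alpha = 2$. For the shifted pieces, Chan's conjecture extends the asymptotic~\eqref{eq:F_delta_asymptotic} for $F_\Delta(\alpha)$ to longer $\alpha$-ranges, and predicts that for fixed nonzero $\Delta$ the statistic $F_\Delta(\alpha) + F_{-\Delta}(\alpha)$ is oscillatory with essentially zero mean against $1/\alpha^2$ on compact subsets of $(1,\infty)$; hence $\int_1^A(F_\Delta + F_{-\Delta})/\alpha^2\, \d \alpha = o(1)$ for each fixed $A$. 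The trivial pointwise bound $|F_\Delta(\alpha)|\le F(\alpha)$, which follows from writing~\eqref{eq:F_delta_def} as a non-negative Fourier transform against $\hat{w}(\xi) = \pi e^{-2|\xi|}$ and using $|e^{-i\Delta\xi}|\le 1$, combined with Montgomery's conjecture controls the tail $\alpha \ge A$ uniformly in $\Delta$. Letting $T\to\infty$ first and then $A\to\infty$ therefore gives $\int_1^\infty (F_\Delta + F_{-\Delta})/\alpha^2\, \d \alpha = o(1)$ and so $I(\Delta,T) \to 1$, yielding Berry's formula (b).

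The principal obstacle is this exchange of limits: Chan's conjecture supplies only local information on $F_\Delta$ near $\alpha = 1$, while the integral defining $I(\Delta,T)$ runs to infinity. The decisive technical input will be the uniform (in $\Delta$) $L^1$-with-weight $1/\alpha^2$ bound on the tail, for which the Fourier-representation bound $|F_\Delta(\alpha)|\le F(\alpha)$ combined with Montgomery's conjecture is exactly what is needed. A secondary caveat worth flagging is that Theorem~\ref{thm:short-intervals-berry} is restricted to $\Delta = o(\log^2 T)$, so the method only delivers Conjecture~\ref{con:berry}(b) for $\delta = o(\log^3 T)$; pushing to the full non-universal range $\delta \gg \log T$ asserted by Berry would require an extension of Theorem~\ref{thm:short-intervals-berry} beyond the scope of the present arguments.
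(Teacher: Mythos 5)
Your proposal is correct and follows essentially the same route as the paper's proof of Corollary \ref{cor:berry}: specialize Theorem \ref{thm:short-intervals-berry} at $\Delta=2\pi\delta/\log T$, observe that the prime sum is Berry's sum verbatim (and evaluates, via partial summation, to the $\log(2\pi\delta)+\gamma_0-\textup{Ci}(2\pi\delta)$ piece when $\Delta=o(1)$), and use Conjecture \ref{con:chan} (whose $\Delta=0$ case is Montgomery's strong conjecture) together with $|F_{\pm\Delta}(\alpha)|\le F(\alpha)$ and a tail estimate to show the zero integral is $1+o(1)$ in the non-universal regime, while in the universal regime it reduces (via the Section \ref{sec:transition} error control) to Fujii's integral \eqref{FujiiResult}, exactly as the paper does through Proposition \ref{prop:fujii-reduc} and Fujii's remarks. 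Two minor corrections: the tail $\alpha\ge A$ is controlled not by Montgomery's conjecture but by the unconditional (on RH) averaged bounds of Lemma \ref{prop:BoundedOnAverage} (Goldston's Lemma A), and since Conjecture \ref{con:chan} is only posited for $\Delta=o(\log^{1/3}T)$, the argument yields part (b) for $\delta=o(\log^{4/3}T)$ as in Corollary \ref{cor:berry}, not for $\delta=o(\log^{3}T)$.
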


\smallskip

In 1990, Fujii \cite{Fu1} proved an asymptotic formula for \eqref{eq:S_short_intervals}, assuming RH, in the universal regime where $\delta=o(\log T)$. 
In particular, assuming RH and Montgomery's Strong Pair Correlation Conjecture, Fujii proves Berry's conjecture in the universal regime (part (a) of the above conjecture). However, Fujii's proof relies on the fact that $\tfrac{\delta}{\log T} \rightarrow 0$ as $T \rightarrow \infty$ in numerous places, and it is not obvious that his proof can be modified to establish part (b) up to an error of size $o(T)$. 

\smallskip

Assuming RH and a version of the Strong Pair Correlation Conjecture (in longer ranges) due to Chan, we show that our formulae in Theorems \ref{thm:short-intervals-keating} -- \ref{thm:short-intervals-berry} imply Berry's conjecture in both the universal and the non-universal regimes. Although Berry never conjectures the range of $\delta$ for which part (b) of Conjecture \ref{con:berry} holds, we verify his conjecture holds in the range $\delta=o(\log^\frac{4}{3} T)$. Conceivably part (b) continues to hold for $\delta$ in a much longer range. We require the following generalization of the strong form of Montgomery's Pair Correlation Conjecture due to Chan \cite[Conjecture 1.1]{Ch}.
\begin{conjecture}[Chan, 2004]\label{con:chan} 
For $|\alpha|\ge 1$ and  $\Delta=o\!\left(\log^{\frac{1}{3}}T\right), $ we have \[F_\Delta(\alpha) = T^{- i \alpha \Delta}\,w\!\left(\Delta\right)\big(1+o(1)\big),\] uniformly for $\alpha$ in compact intervals as $T\to \infty$. 
\end{conjecture}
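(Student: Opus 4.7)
The plan is to follow Montgomery's explicit-formula approach to $F(\alpha)$, adapted to accommodate the shift $\Delta$. First I would separate the diagonal contribution $\gamma=\gamma'$ from the double sum in \eqref{eq:F_delta_def}. Since $w$ is even, the diagonal contribution equals
\[
\frac{2\pi T^{-i\alpha\Delta}\, w(\Delta)}{T\log T}\, N(T) \;=\; T^{-i\alpha\Delta}\, w(\Delta)\bigl(1+o(1)\bigr),
\]
by the formula \eqref{N(T)definition} for $N(T)$. This already produces the conjectured main term, so the conjecture reduces to showing that the off-diagonal piece
\[
\frac{2\pi T^{-i\alpha\Delta}}{T\log T} \sum_{\gamma\neq \gamma'} T^{i\alpha(\gamma-\gamma')}\, w(\gamma-\gamma'-\Delta)
\]
is $o(1)$, uniformly for $\alpha$ in compact subsets of $\{|\alpha|\ge 1\}$ and $\Delta = o(\log^{1/3}T)$.

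Next I would convert the off-diagonal sum into a bilinear prime sum by the same route Montgomery used for $F(\alpha)$, in which $\sum_\gamma x^{i\gamma}/\bigl(\tfrac14+(\gamma-t)^2\bigr)$ is related to $\Lambda(x)/\sqrt{x}$ via the explicit formula for $\zeta'/\zeta$. Integrating against $T^{-2it\alpha}$ over $t\in [0,T]$, and carrying the translation $w(\cdot-\Delta)$ through a Plancherel step (the Fourier transform of $w(u-\Delta)$ is $2\pi e^{-2|\xi|}e^{-i\Delta\xi}$), I expect a representation
\[
F_\Delta(\alpha) - T^{-i\alpha\Delta}\,w(\Delta) \;=\; \frac{2\pi}{T\log T}\sum_{\substack{m,n\ge 1\\ m\neq n}}\frac{\Lambda(m)\Lambda(n)}{\sqrt{mn}}\, K_{\alpha,\Delta}(m,n) \,+\, R(\alpha,\Delta,T),
\]
where $K_{\alpha,\Delta}$ is a smooth kernel localized near $\log(m/n)\approx \Delta$ with oscillation scale $\alpha\log T$, and $R$ collects the $m=n$ term together with boundary contributions from the explicit formula. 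The pure-diagonal prime piece $m=n$ is of order $T^{-2\alpha}\log T$ by the prime number theorem and is negligible when $|\alpha|\ge 1$; the boundary remainders can be absorbed using partial summation and the bound $S(t)\ll \log t$.

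The main obstacle is estimating the off-diagonal bilinear sum over $m\neq n$. Writing $h=m-n$ and interchanging summations, the inner sum is essentially
\[
\sum_{n\ge 1}\frac{\Lambda(n)\Lambda(n+h)}{n}\,\widetilde K_{\alpha,\Delta}(n,h),
\]
and its evaluation requires a quantitative Hardy--Littlewood conjecture
\[
\sum_{n\le N}\Lambda(n)\Lambda(n+h) \;=\; \mathfrak{S}(h)\, N \,+\, O\!\bigl(N^{1-\eta}\bigr)
\]
with sufficient uniformity in $h$. Granting such an estimate, one inserts the singular series, performs a stationary-phase analysis of the resulting oscillatory sums over $h$, and checks that the constraint $\Delta = o(\log^{1/3}T)$ is exactly the threshold at which the power-saving error dominates over the effective support of $K_{\alpha,\Delta}$. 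Since a twin-prime type estimate of the requisite uniformity is well beyond current unconditional methods, this is precisely why Chan's statement is posed as a conjecture rather than a theorem: the argument becomes rigorous conditionally on such an input (in line with the heuristic frameworks of Bogomolny--Keating \cite{BK} and Conrey--Snaith \cite{CoSn}), but the unconditional bound remains inaccessible.
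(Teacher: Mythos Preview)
The statement you are addressing is a \emph{conjecture}, not a theorem: the paper attributes it to Chan \cite{Ch} and uses it as a hypothesis (see Corollary~\ref{cor:berry}), but makes no attempt to prove it. There is therefore no proof in the paper to compare your proposal against.

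Your write-up is not a proof but a heuristic derivation, and you acknowledge this yourself in the final paragraph. The sketch is broadly in the spirit of the Bogomolny--Keating and Conrey--Snaith heuristics you cite: extract the diagonal $\gamma=\gamma'$ to get the main term, convert the off-diagonal via the explicit formula into a bilinear sum over primes, and then appeal to a quantitative Hardy--Littlewood conjecture to control $\sum_n \Lambda(n)\Lambda(n+h)$. That last step is exactly the unproven input, and you correctly flag it as the obstruction. So as a heuristic your outline is reasonable, but it should not be labeled a ``proof proposal'': the conjecture remains open, and the paper treats it as such.
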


\begin{corollary}\label{cor:berry}
Assume RH and Conjecture \ref{con:chan}. Then, Conjecture \ref{con:berry} holds for all $\delta=o\!\left(\log^{\frac{4}{3}}T\right)$.
\end{corollary}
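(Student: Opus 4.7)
I would apply Theorem \ref{thm:short-intervals-berry} with $\Delta = 2\pi\delta/\log T$; since $\delta = o(\log^{4/3} T)$ gives $\Delta = o(\log^{1/3} T)$, this lies within the range of that theorem (which requires $\Delta = o(\log^2 T)$) and exactly matches the range of Chan's Conjecture \ref{con:chan}. Writing the main term as $(T/\pi^2)\{\mathcal{S}(\Delta)+\mathcal{I}(\Delta)\} + o(T)$, where $\mathcal{S}(\Delta)$ is the prime sum and $\mathcal{I}(\Delta) = \tfrac12\int_1^\infty(2F(\alpha)-F_\Delta(\alpha)-F_{-\Delta}(\alpha))/\alpha^2\,d\alpha$, one observes that $\mathcal{S}(\Delta)$ already matches the prime sum in Conjecture \ref{con:berry}(b). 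Thus the proof reduces to (i) showing $\mathcal{I}(\Delta) = 1+o(1)$ in the non-universal regime, and (ii) showing that $\mathcal{S}(\Delta)+\mathcal{I}(\Delta)$ collapses to Berry's closed-form expression in the universal regime.

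For $\mathcal{I}(\Delta)$, fix a large parameter $K$ and split $\mathcal{I}(\Delta) = \mathcal{I}_{\le K} + \mathcal{I}_{>K}$. The tail $\mathcal{I}_{>K}$ is $O(1/K)$ uniformly in $\Delta$ by Lemma \ref{prop:BoundedOnAverage}. On $[1,K]$, Montgomery's strong pair correlation conjecture (the $\Delta=0$ case of Chan's) gives $F(\alpha) = 1+o(1)$ uniformly, while Conjecture \ref{con:chan} gives $F_{\pm\Delta}(\alpha) = w(\Delta)\,T^{\mp i\alpha\Delta}(1+o(1))$ uniformly on $[1,K]$. Since $\alpha\Delta\log T = 2\pi\alpha\delta$, this produces
\[
\mathcal{I}(\Delta) \;=\; \int_1^K \frac{1 - w(\Delta)\cos(2\pi\alpha\delta)}{\alpha^2}\,d\alpha + O(1/K) + o(1).
\]

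In the non-universal regime $\log T \ll \delta = o(\log^{4/3}T)$, we have $\Delta\to\infty$ and $w(\Delta) = 4/(4+\Delta^2) = O(1/\Delta^2)\to 0$, so sending first $T\to\infty$ and then $K\to\infty$ gives $\mathcal{I}(\Delta) = 1+o(1)$, and combined with $\mathcal{S}(\Delta)$ this yields Conjecture \ref{con:berry}(b). In the universal regime $\delta = o(\log T)$, we have $\Delta\to 0$ and $w(\Delta) = 1 + O(\Delta^2) = 1+o(1)$; the substitution $u = 2\pi\alpha\delta$ together with $\int_a^\infty (1-\cos u)/u^2\,du = (1-\cos a)/a + \pi/2 - \Si(a)$ yields
\[
\mathcal{I}(\Delta) = (1-\cos(2\pi\delta))+\pi^2\delta - 2\pi\delta\,\Si(2\pi\delta) + o(1).
\]
For $\mathcal{S}(\Delta)$ in the universal regime I would appeal instead to Theorem \ref{thm:short-intervals}, whose main term replaces $\mathcal{S}(\Delta)$ with $\int_0^1(1-\cos(\Delta\alpha\log T))/\alpha\,d\alpha + c(\Delta)$. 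Since the two theorems give asymptotics for the same second moment, and $c(\Delta) \ll \Delta^2 = o(1)$ when $\Delta\to 0$, we obtain $\mathcal{S}(\Delta) = \int_0^1(1-\cos(2\pi\delta\alpha))/\alpha\,d\alpha + o(1)$. The substitution $u = 2\pi\delta\alpha$ and the identity $\int_0^x(1-\cos u)/u\,du = \log x + \gamma_0 - \textup{Ci}(x)$ give $\mathcal{S}(\Delta) = \log(2\pi\delta) + \gamma_0 - \textup{Ci}(2\pi\delta) + o(1)$. Adding $\mathcal{S}(\Delta)$ and $\mathcal{I}(\Delta)$ reproduces Conjecture \ref{con:berry}(a) term-by-term.

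The main obstacle is ensuring uniformity of all $o(1)$ and tail estimates as $\Delta$ ranges throughout the admissible scale $o(\log^{1/3}T)$, especially at the upper boundary $\delta \asymp \log^{4/3}T$ where Chan's conjecture is applied at the edge of its stated range; in particular, the choice of $K=K(T)$ must be made only after the dominant $\Delta$-dependence in the $[1,K]$ integral has been extracted, so that the $o(1)$ from Chan's asymptotic, the $O(1/K)$ tail, and the $o(1)$ in Theorem \ref{thm:short-intervals-berry} can all be harmonized.
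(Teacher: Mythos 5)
Your overall strategy for part (b) --- apply Theorem \ref{thm:short-intervals-berry} with $\Delta=2\pi\delta/\log T$ and use Conjecture \ref{con:chan} to show $\mathcal{I}(\Delta)=\tfrac12\int_1^\infty\bigl(2F(\alpha)-F_\Delta(\alpha)-F_{-\Delta}(\alpha)\bigr)\alpha^{-2}\,\d\alpha=1+o(1)$ --- is the paper's strategy, and your universal-regime computation is a legitimate, more self-contained alternative to the paper's treatment of part (a) (the paper instead invokes Proposition \ref{prop:fujii-reduc} to reduce Theorem \ref{thm:short-intervals} to Fujii's formula \eqref{FujiiResult} and then cites Fujii's derivation of Berry's closed form under Montgomery's strong pair correlation conjecture). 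However, your argument for part (b) has a genuine hole: from $\delta\gg\log T$ you infer $\Delta\to\infty$ and hence $w(\Delta)\to0$. The hypothesis $\delta\gg\log T$ only means $\delta\ge c\log T$, so the non-universal regime includes $\delta\asymp\log T$, i.e.\ $\Delta\asymp 1$, where $w(\Delta)$ is a constant strictly between $0$ and $1$ and your reasoning stalls at $\mathcal{I}(\Delta)=1-w(\Delta)\int_1^\infty\alpha^{-2}\cos(2\pi\alpha\delta)\,\d\alpha+o(1)$ with no conclusion. The correct (and simple) way to finish, which is what the paper does, is an oscillation estimate: integrating by parts, $\int_1^\infty\alpha^{-2}\cos(\Delta\alpha\log T)\,\d\alpha\ll 1/(\Delta\log T)\ll 1/\log T$ for all $\Delta\gg1$, because the frequency $\Delta\log T=2\pi\delta\to\infty$; the decay of $w(\Delta)$ is never needed, and $|w(\Delta)|\le1$ suffices. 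As written, your proof only covers the sub-range $\delta/\log T\to\infty$ of Conjecture \ref{con:berry}(b).

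Two smaller points. First, the tail estimate $\mathcal{I}_{>K}=O(1/K)$ is not literally the statement of Lemma \ref{prop:BoundedOnAverage}, which bounds the full integral; you need its first display together with the nonnegativity of $2F-F_\Delta-F_{-\Delta}$ from \eqref{eq:F_delta-id-positive} and partial integration, giving $\ll K^{-1}\bigl(1+\Delta/\log^2 T\bigr)=O(1/K)$ in your range of $\Delta$ --- fixable, but it should be said. Second, your extraction of $\mathcal{S}(\Delta)$ in the universal regime by comparing Theorems \ref{thm:short-intervals} and \ref{thm:short-intervals-berry} is valid (or quote \eqref{eq:l12-13} directly), and your identities $\int_0^x(1-\cos u)u^{-1}\,\d u=\log x+\gamma_0-\textup{Ci}(x)$ and $\int_a^\infty(1-\cos u)u^{-2}\,\d u=(1-\cos a)/a+\pi/2-\Si(a)$ do reproduce Berry's part (a) term by term; so, modulo the standard diagonal choice of $K=K(T)$ that you already flag, the universal-regime half of your argument is sound.
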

 The restriction on $\delta$ in the above corollary comes from Conjecture \ref{con:chan}. As we shall see, the restriction $\Delta=o(\log^2 T)$ in Theorems \ref{thm:short-intervals-keating} -- \ref{thm:short-intervals-berry}, corresponding to $\delta=o(\log^3 T)$, arises naturally in two different places. It first arises from the last error term in the formula \eqref{eq:F_delta_asymptotic} for $F_\Delta(\alpha)$ from \cite[Theorem 1.1]{Ch} (see Lemma \ref{prop:BoundedOnAverage} and Lemma \ref{lem:r}), and it again arises from estimating a sum over primes in Lemma \ref{lem:h+g} below. In the following sections, we attempt to state each lemma in the largest possible range of $\Delta$ to clarify where these restrictions appear. 

\section{A representation formula for $\log|\zeta(1/2+it)|$}\label{sec:representation}
\subsection{Some auxiliary functions}
Following the ideas developed by Goldston \cite{Go}, we must obtain a representation formula for $\log |\zeta(1/2+it)|$ in terms of a Dirichlet polynomial supported over prime powers and a sum over the zeros of $\zeta(s)$. This is based on an explicit formula of Montgomery \cite{Mo} and, in our case, requires introducing three auxiliary, real-valued functions, whose technical properties play important roles in our proof. For $u\in (0,2),$ define
	\begin{equation}\label{eq:def_f}
	    f(u):= u\int\limits_0^\infty \frac{\sinh[y(1-u)]}{\cosh y}\d y;
	\end{equation}
	for $u\in (-2,2),$ define
	\begin{equation}\label{eq:def_g}
g(u) :=  \int\limits_{0}^{\infty} \frac{e^{-y}\cosh(uy)}{\cosh y} \,\d y;
	\end{equation}
	and for $u\in \R\setminus\{0\},$ define
	\begin{equation}\label{eq:def_h}
	    h(u):= \cos u \int\limits_0^\infty \frac{y}{\cosh y}\frac{\d y}{y^2\!+\!u^2}.
	\end{equation}
	Before stating our representation formula, we collect relevant properties of $f$, $h$ and $g$ in the following lemma. This is similar to the ideas and functions used in \cite[Lemma 4 and Lemma 5]{CQ-H2}, and they will be used to obtain a delicate cancellation of main terms. 
	\begin{lemma}\label{lem:functions}
	Let $f$, $h$, and $g$ be defined in \eqref{eq:def_f}, \eqref{eq:def_h}, and \eqref{eq:def_g}, respectively. Then
\begin{enumerate}
    \item [\textup{(}a\textup{)}] we have $g\in C^\infty(-2,\,2)$ and $g$ is even\textup{;}
    \item [\textup{(}b\textup{)}] for $u \in (0,2)$, 
we have 
\begin{equation*}
g(u) = \frac{1-f(u)}{u} ;
\end{equation*}
\item [\textup{(}c\textup{)}] we have $h\in L^1(\R)\cap L^2(\R)$, $h$ is even, and
\begin{equation*}\label{hhatFunction}
\widehat{h}(a)=\pi \begin{cases} 
g(2\pi a), &  \textrm{if}\ \  2\pi |a|\leq 1, \\
 \dfrac{1}{2\pi |a|}, & \textrm{if}\ \   2\pi |a|>1.
   \end{cases}
\end{equation*} 
\end{enumerate}
	\end{lemma}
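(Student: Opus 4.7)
My plan is to verify the three parts in the order stated. Parts (a) and (b) are short, while (c) requires a more involved Fourier computation.

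For (a), evenness of $g$ is immediate from the evenness of $\cosh(uy)$ in $u$. To establish $C^\infty$ regularity on $(-2,2)$, I would differentiate under the integral sign, justifying the interchange by the bound $|\partial_u^k(e^{-y}\cosh(uy)/\cosh y)| \ll y^k e^{(\sigma_K-2)y}$ valid for $u$ in any compact $K\subset(-2,2)$ with appropriate $\sigma_K<2$. For (b), I would establish the identity $ug(u)+f(u)=1$ for $u\in(0,2)$ by combining the two integrals and invoking the pointwise identity
$$e^{-y}\cosh(uy) + \sinh\bigl[(1-u)y\bigr] = e^{-uy}\cosh y,$$
which is verified by expanding both sides in exponentials. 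The combined integral then collapses to $u\int_0^\infty e^{-uy}\,\d y = 1$, after checking absolute convergence (immediate since the integrand of $f$ decays like $e^{-uy}$ regardless of the sign of $1-u$).

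Part (c) starts from the observations that evenness of $h$ follows from evenness of $\cos$ and that $h\in L^1\cap L^2$ from the crude bounds $|h(u)|\ll 1/u^2$ for $|u|\ge 1$ (obtained by using $y^2+u^2\ge u^2$) and $|h(u)|\ll \log(1/|u|)$ as $u\to 0$ (by splitting the $y$-integral at $y=1$). To compute the Fourier transform, evenness gives $\widehat h(a) = 2\int_0^\infty h(u)\cos(2\pi a u)\,\d u$, and Fubini applies since
$$\int_0^\infty\!\!\int_0^\infty \frac{y\,\d u\,\d y}{\cosh y\,(y^2+u^2)} = \frac{\pi}{2}\int_0^\infty \frac{\d y}{\cosh y} < \infty.$$
After swapping, the product-to-sum identity $\cos u\cos(2\pi a u)=\tfrac12[\cos((2\pi a-1)u)+\cos((2\pi a+1)u)]$ together with the classical Laplace integral $\int_0^\infty \cos(\beta u)/(y^2+u^2)\,\d u = \tfrac{\pi}{2y}e^{-|\beta|y}$ reduce the computation to
$$\widehat h(a) = \frac{\pi}{2}\int_0^\infty \frac{e^{-|2\pi a-1|y}+e^{-|2\pi a+1|y}}{\cosh y}\,\d y.$$
When $2\pi|a|\le 1$ the two exponents combine into $2e^{-y}\cosh(2\pi a y)$, producing $\pi g(2\pi a)$; when $2\pi|a|>1$ they combine into $2e^{-2\pi|a|y}\cosh y$, which cancels with the denominator and integrates to $1/(2|a|)$, matching the stated value.

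The main obstacle I anticipate is this case split at $2\pi|a|=1$: the algebraic alignment of the $2\pi|a|\le 1$ branch with the definition of $g$ is precisely what motivates introducing $g$ alongside $f$ in the first place, and it is the only step that is not straightforward bookkeeping. The Fubini justification and the Laplace integral are entirely standard.
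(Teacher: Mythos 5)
Your proposal is correct and follows essentially the same route as the paper: dominated convergence for (a), the same exponential identity (the paper phrases it as $\frac{1-f(u)}{u}=\int_0^\infty(e^{-uy}-\sinh[y(1-u)]/\cosh y)\,\d y$) for (b), and for (c) Fubini plus the Fourier/cosine transform of the Cauchy kernel, landing on the identical intermediate expression $\frac{\pi}{2}\int_0^\infty(e^{-|1-2\pi a|y}+e^{-|1+2\pi a|y})/\cosh y\,\d y$ and the same case split at $2\pi|a|=1$. The only cosmetic differences are that you get $h\in L^2$ from pointwise bounds rather than from $\widehat{h}\in L^2$ via Plancherel, which is immaterial.
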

	{\sc Remark}. We highlight that $h$ has an unbounded but integrable singularity at the origin, which is different from the situation in both \cite{Go} and \cite{CQ-H2}. We also note that $f,$ $g,$ and their derivatives are uniformly bounded on the interval $[0,\,1]$.
	\begin{proof}
First we consider $g(u)$ as defined in \eqref{eq:def_g}. By the Dominated Convergence Theorem, we have that $g\in C^{\infty} (-2,2)$.  The fact that $g$ is even follows from the fact that $\cosh(y)$ is even. Now let $u \in (0,2)$. Then for all $u >0$, we know 
\begin{equation*}
\frac{1}{u} = \int\limits_0^{\infty} e^{-uy} \, \d\, y.
\end{equation*}
Using this representation for $\tfrac{1}{u}$, it follows that
\begin{equation*}
\frac{1-f(u)}{u}  = \int \limits_0^{\infty} \left( e^{-uy} - \frac{\sinh(y(1- u))}{\cosh y} \right) \d\, y = \int \limits_0^{\infty}  \frac{ e^{-y}\left(e^{uy}+ e^{-uy}\right)}{e^y+e^{-y}}  \d\, y = \int \limits_0^{\infty}  \frac{ e^{-y}\cosh(uy)}{\cosh y}  \d\, y=g(u),
\end{equation*}
as claimed. Next we consider $h(u)$ defined in \eqref{eq:def_h}.  First, note that $h(u)$ is even by construction. Next, we will show that $h\in L^1(\mathbb{R})$.  By the definition of $h$, observe that 
\begin{equation}\label{hFunctionbound}
\int\limits_{-\infty}^{\infty}\hspace{-.1cm} |h(v)| \d v  \leq   
2\int\limits_0^{\infty} \frac{1}{\cosh u}  \int\limits_{0}^{\infty}  \frac{u}{u^2+v^2}   \d v \d u= \pi\int\limits_0^{\infty} \frac{\d u}{\cosh u}= \pi^2\nonumber ,
\end{equation}
which implies $h\in L^1(\mathbb{R})$. Next, we calculate the Fourier transform of $h(v)$ using the well known Fourier pair 
\begin{equation}\label{firstfourierpair}
 \varphi(y)= e^{-2\pi |y|}\quad \text{ and } \quad \widehat{\varphi}(\xi) =\frac{1}{\pi} \frac{1}{1+\xi^2}.
\end{equation}
Let $a\in \mathbb{R}$. Since $h$ is even, we may assume $a\geq 0$. Using the variable change $w=\tfrac{v}{u}$ and \eqref{firstfourierpair}, it follows that

\begin{align*}\label{hhatFunctionPart1}
\widehat{h}(a) 
&=\int \limits_{-\infty}^{\infty} h(v)\, e^{-2\pi i a v}\, \d v \nonumber\\
&=\int \limits_{0}^{\infty}  \frac{u}{\cosh u} \int\limits_{-\infty}^{\infty} \frac{\cos v}{u^2 +v^2} \ e^{-2\pi i av}\, \d v\, \d u
 \nonumber\\
 &=\frac{1}{2} \int \limits_{0}^{\infty}  \frac{1}{\cosh u}  \int\limits_{-\infty}^{\infty} \frac{(e^{u(\frac{1}{2\pi}- a)2\pi i w}+e^{u(-\frac{1}{2\pi}-a)2\pi i w})}{1 +w^2} \d w\, \d u \nonumber\\
&=\frac{\pi}{2} \int \limits_{0}^{\infty}  \frac{1}{\cosh u} \big(e^{-u|1- 2\pi a|}+e^{-u|1+2\pi a|}\big)\, \d u \nonumber\\
      &=\begin{cases} 
 \pi \ g(2\pi a), & 0\le 2\pi a\leq 1, \\
 \dfrac{1}{2a}, & 2\pi a>1.
   \end{cases}\nonumber
\end{align*}
Clearly, $\widehat{h}\in L^2(\R)$, and therefore $h\in L^2(\R)$.  This completes the proof.
	\end{proof}
	
	\subsection{Representation formula}
\noindent We now state our formula for $\log|\zeta(\tfrac{1}{2}+it)|$.
	\begin{lemma}\label{lem:represLogZeta}
	Assume RH. For $x\ge 4$, $t\ge 1$, and $t\neq \gamma$, we have 
	\begin{equation*}
	    \log|\zeta(\tfrac{1}{2}+it)|= - \sum_\gamma h[(\gamma-t)\log x] 
	    +\sum_{n\le x} \frac{\Lambda(n)\cos(t\log n)}{n^{1/2}\log n}f\!\left(\frac{\log n}{\log x}\right)
	    + \frac{\log 2\, \log\frac{t}{2\pi} }{2\log x}
	    + O\!\left(\frac{x^{1/2}}{t\log^2 x} \right).
	\end{equation*}
	\end{lemma}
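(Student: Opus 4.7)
The plan is to adapt Goldston's \cite{Go} strategy: apply a Weil-type explicit formula to a shifted test function built from $h$, then exploit the algebraic relation $g(u) = (1-f(u))/u$ from Lemma \ref{lem:functions}(b) to produce an $f$-weighted prime sum and a precise archimedean main term. The functions $f,g,h$ have been engineered so that this cancellation closes; in particular, the logarithmic blow-up of $h$ near the origin (noted in the remark following Lemma \ref{lem:functions}) mirrors the logarithmic blow-up of $\log|\zeta(\tfrac12+it)|$ near the zeros, so that $-\sum_\gamma h((\gamma-t)\log x)$ can absorb the singularities of $\log|\zeta|$.

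Concretely, I would apply the explicit formula to $F_t(u) := h((u-t)\log x)$, whose Fourier transform is $\widehat{F_t}(a) = e^{-2\pi i a t}\,\widehat h(a/\log x)/\log x$. The zero-sum side is $\sum_\gamma h((\gamma-t)\log x)$, and the prime side, after combining the $n^{\pm it}$ pairs into $2\cos(t\log n)$ and using the piecewise formula for $\widehat h$ from Lemma \ref{lem:functions}(c), splits at $n = x$. The range $n\le x$ contributes $-(2\pi/\log x)\sum_{n\le x}\Lambda(n)\cos(t\log n)\,g(\log n/\log x)/\sqrt n$, which, via $g(u) = (1-f(u))/u$, equals
\[
-2\pi \sum_{n\le x}\frac{\Lambda(n)\cos(t\log n)}{\sqrt n\,\log n} \;+\; 2\pi\sum_{n\le x}\frac{\Lambda(n)\cos(t\log n)}{\sqrt n\,\log n}\,f\!\left(\tfrac{\log n}{\log x}\right).
\]
The range $n>x$ contributes $-2\pi \sum_{n>x}\Lambda(n)\cos(t\log n)/(\sqrt n \log n)$, which combines with the ``bare'' sum above into the full Dirichlet series $-2\pi \sum_{n\ge 2}\Lambda(n)\cos(t\log n)/(\sqrt n\,\log n)$; this is to be identified with $-2\pi\log|\zeta(\tfrac12+it)|$. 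The archimedean piece in the explicit formula contributes $\widehat h(0)\log(t/2\pi)/(2\pi\log x)$, and a direct computation (integrating against $e^{-y}/\cosh y$) gives $g(0)=\log 2$, hence $\widehat h(0) = \pi g(0) = \pi\log 2$; this produces the stated main term $\log 2\cdot\log(t/2\pi)/(2\log x)$.

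The main obstacle is making rigorous the identification of the formally divergent full series with $\log|\zeta(\tfrac12+it)|$. The cleanest route is contour integration: for $c>1/2$, write
\[
\sum_{n\le x}\frac{\Lambda(n)}{n^{1/2+it}\,\log n}\,f\!\left(\tfrac{\log n}{\log x}\right) = \frac{1}{2\pi i}\int_{(c)} \log\zeta\!\left(\tfrac12 + it + w\right)\, x^w\, M(w)\, dw
\]
for a Mellin pair $M$ compatible with $f$, then shift the contour to the left, picking up the residue at $w=0$ (which yields $\log\zeta(\tfrac12+it)$) and residues at $w = i(\gamma-t)$ (which, once real parts are taken and combined, yield $\sum_\gamma h((\gamma-t)\log x)$). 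The remaining far-left integral is bounded by $O(x^{1/2}/(t\log^2 x))$ using the RH bound $\log|\zeta(\sigma+it)| \ll \log t$ for $\sigma\ge 1/2$ together with Stirling's formula. The technical care beyond \cite{Go,CQ-H2} is in verifying conditional convergence of $\sum_\gamma h((\gamma-t)\log x)$ (using the zero density $N(T+1)-N(T)\ll \log T$, which tames the $-\log|\gamma-t|$ singularity of $h$) and in checking that the explicit formula still applies despite $h$ being unbounded at the origin, which is handled by regularizing $h$ and passing to a limit monitored by the $L^1$-bound $\|h\|_1 \le \pi^2$ from Lemma \ref{lem:functions}.
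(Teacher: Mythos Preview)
Your plan correctly identifies the key ingredients --- the relation $g(u)=(1-f(u))/u$, the value $\widehat h(0)=\pi g(0)=\pi\log 2$ producing the archimedean term, and the need to connect the prime sum to $\log|\zeta|$ --- but the contour route you sketch has a genuine gap. The function $\log\zeta(\tfrac12+it+w)$ has \emph{logarithmic branch points} at $w=i(\gamma-t)$, not poles, so there are no ``residues'' there to pick up; shifting the contour produces branch-cut integrals, and extracting $h((\gamma-t)\log x)$ from those is not the routine step you describe. Likewise, applying the Weil explicit formula directly to $h$ is problematic because $h$ is unbounded at the origin and so falls outside the admissible class of test functions; any regularization must then contend with the same divergent tail $\sum_{n>x}\Lambda(n)n^{-1/2}\cos(t\log n)/\log n$ that you flagged.

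The paper bypasses both difficulties by working with $\zeta'/\zeta$ rather than $\log\zeta$. One starts from
\[
\log|\zeta(\tfrac12+it)| = -\int_{1/2}^{\infty}\Re\frac{\zeta'}{\zeta}(\sigma+it)\,d\sigma
\]
and inserts Montgomery's explicit formula for $(x^{\sigma-1/2}+x^{1/2-\sigma})\,\Re\frac{\zeta'}{\zeta}(\sigma+it)$ (as in \cite[Eq.~(2.6)]{CQ-H2}). That identity is already truncated to $n\le x$ on the prime side, and the zero contribution appears as the convergent sum $\sum_\gamma 2(\sigma-\tfrac12)/[(\sigma-\tfrac12)^2+(t-\gamma)^2]$, reflecting that $\zeta'/\zeta$ has simple poles (hence honest residues) at the zeros. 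Dividing by $2\cosh((\sigma-\tfrac12)\log x)$ and integrating over $\sigma\in[1/2,\infty)$ then produces $h$, $f$, and the $\log 2$ constant directly as the resulting $\sigma$-integrals, with no divergent prime series and no branch cuts. So $f$, $g$, $h$ arise here as $\sigma$-integrals against the explicit formula for $\zeta'/\zeta$, not as a Mellin/Fourier kernel paired with $\log\zeta$; this is exactly the mechanism in Goldston's proof for $S(t)$ and in \cite{CQ-H2}.
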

	\begin{proof}
We begin with the fact that, for $ t\neq \gamma$, we have 
\[
\log|\zeta(\tfrac{1}{2}+it)|  = -\int\limits_{1/2}^{\infty} \Re \frac{\zeta'}{\zeta} (\sigma+it)\d \sigma.
\] 
Now we use a slightly modified version of an explicit formula of Montgomery \cite{Mo} (see \cite[Eq. (2.6)]{CQ-H2}). For $\rho = \tfrac12 +i \gamma$, $x\geq4$, $s=\sigma+it$ with $\sigma>\tfrac{1}{2}$ and $t\geq 1$, 
we have
\begin{align}\label{eq:eqrealpart}
(x^{\sigma-\tfrac12}  + x^{\tfrac12 - \sigma}) \Re \frac{\zeta'}{\zeta} (\sigma+it)&=   \sum\limits_{\gamma} \cos((t-\gamma)\log x) \frac{2(\sigma-\tfrac12)}{(\sigma-\tfrac12)^2+(t-\gamma)^2}\nonumber   \\
& \quad-\sum\limits_{n \leq x} \Lambda(n)\cos(t\log n) \left(\frac{x^{\sigma - \tfrac12}}{n^{\sigma}} - \frac{x^{ \tfrac12-\sigma }}{n^{1-\sigma}} \right) \nonumber\\
& \quad -x^{\tfrac12}\Re \left( \frac{ x^{-it}(1-2\sigma)}{(\sigma-it)(1 -\sigma - it)}\right) -x^{\tfrac12-\sigma}\log\left( \frac{t}{2\pi}\right)\nonumber \\
&\quad  +O\!\left(\dfrac{\sigma^2}{t}\left(x^{-5/2}+x^{1/2-\sigma}\right)\right).
\end{align}
This immediately follows from \cite[Eq. (2.6)]{CQ-H2} by letting $n=1$ therein and taking real parts.
Dividing by $(x^{\sigma-\tfrac12}  + x^{\tfrac12 - \sigma}) = 2\cosh((\sigma-\tfrac12)\log x)$ and integrating \eqref{eq:eqrealpart} from $\tfrac12$ to infinity, for $x\geq 4,$ $t \geq 1$, and $t\neq \gamma$ we have
\begin{align}\label{eq:eqrealpart2}
\log|\zeta(\tfrac{1}{2}+it)| &=  
-  \sum\limits_{\gamma} \cos((t-\gamma)\log x) \int \limits_{1/2}^{\infty} \frac{(\sigma-\tfrac12)}{(\sigma-\tfrac12)^2+(t-\gamma)^2} \frac{\d \sigma}{\cosh((\sigma-\tfrac12)\log x)} \nonumber\\
& \quad+ \sum\limits_{n \leq x} \Lambda(n)\cos(t\log n)  \int \limits_{1/2}^{\infty} \left(\frac{x^{\sigma - \tfrac12}}{n^{\sigma}} - \frac{x^{ \tfrac12-\sigma }}{n^{1-\sigma}} \right) \frac{\d \sigma}{2\cosh((\sigma-\tfrac12)\log x)} \nonumber\\
& \quad + x^{1/2}\Re \left(x^{-it}  \int \limits_{1/2}^{\infty}  \frac{(\tfrac12-\sigma)}{(\sigma-it)(1 -\sigma - it)}\frac{\d \sigma}{\cosh((\sigma-\tfrac12)\log x)} \right) \nonumber\\
&\quad + \frac{1}{2}\log\frac{t}{2\pi} \int \limits_{1/2}^{\infty}  \frac{x^{\tfrac12-\sigma}}{\cosh((\sigma-\tfrac12)\log x)}\d \sigma 
+   O\!\left(\frac{1}{t} \int \limits_{1/2}^{\infty}  \frac{ \sigma^2 \left(x^{-5/2}+x^{1/2-\sigma}\right) }{\cosh((\sigma-\tfrac12)\log x)} \d \sigma \right).
\end{align}
By using the substitution, $u = (\sigma-\tfrac12)\log x$, 
the integral in the second main term of \eqref{eq:eqrealpart2} yields
\begin{equation*}
\begin{split}
\int \limits_{1/2}^{\infty} \left(\frac{x^{\sigma - \tfrac12}}{n^{\sigma}} - \frac{x^{ \tfrac12-\sigma }}{n^{1-\sigma}} \right) \frac{\d \sigma}{2\cosh((\sigma-\tfrac12)\log x)} 
= \frac{1}{n^{1/2}\log n}f\!\left( \frac{\log n}{\log x}\right),
\end{split}
\end{equation*}
where $f$ is defined in \eqref{eq:def_f}. 
Again, by the same substitution, for the first main term of \eqref{eq:eqrealpart2}, we have
\begin{equation*}
\begin{split}
-\sum\limits_{\gamma} \cos&((t-\gamma)\log x) \int \limits_{1/2}^{\infty} \frac{(\sigma-\tfrac12)}{(\sigma-\tfrac12)^2+(t-\gamma)^2} \frac{\d \sigma}{\cosh((\sigma-\tfrac12)\log x)} 
=-\sum\limits_{\gamma}h[(t-\gamma')\log x],
\end{split}
\end{equation*}
where $h$ is 
defined in \eqref{eq:def_h}.  Finally, the fourth term of \eqref{eq:eqrealpart2} equals
\begin{equation*}
\begin{split}
\frac{\log\tfrac{t}{2\pi}}{2} &\int \limits_{1/2}^{\infty}  \frac{x^{\tfrac12-\sigma}}{\cosh((\sigma-\tfrac12)\log x)}\d \sigma
=\frac{\log 2\log\tfrac{t}{2\pi}}{2\log x}.
\end{split}
\end{equation*}
The other terms are error terms and can be treated similarly to the proof of \cite[Lemma 1]{Go}. Combining all the terms of \eqref{eq:eqrealpart2} completes the proof.  
\end{proof}
Note that we have the extra main term $\frac{\log2 \log t/2\pi}{\log x}$ when compared to Goldston's formula for $S(t)$ in \cite[Lemma 1]{Go}. This comes from Stirling's formula when analyzing the real part of $\frac{\zeta'}{\zeta}(s)$, and it does not appear when taking the imaginary part. We use Lemma \ref{lem:represLogZeta} to obtain an expression for the quantities we want to compute in Theorems \ref{thm:variance} and \ref{thm:short-intervals}. We now adopt some notation for the expressions we will consider. Henceforth, let $T\ge 4$ and $\Delta=\Delta(T)$ be a function of $T$ such that $0<\Delta\ll T^b$, for some fixed $0<b<1$. For $t\ge 1$, denote 
	\begin{equation}\label{eq:A_B_defs}
	    A(t):= \sum_{n\le x} \frac{\Lambda(n)\cos(t\log n)}{n^{1/2}\log n}f\!\left(\frac{\log n}{\log x}\right) \ \ \ \ \text{and}\ \ \ \ B(t):=  - \sum_\gamma h[(\gamma-t)\log x], 
	\end{equation}
	so that $A(t)$ contains the information on primes and $B(t)$ contains the information on zeros in our expression for $\log |\zeta(1/2+it)|$. Additionally, denote
	\begin{align}\label{eq:GHR_defs}
	    &G_1:= -\int\limits_{1}^T |A(t)|^2\,\d t, \ \ \ \ \ \ \ \ \ \ 
	    G_2:= -\int\limits_1^T \left|A\left(t+\Delta\right) - A(t)\right|^2\,\d t,\nonumber\\ &H_1:=2\int\limits_{1}^T A(t)\,\log |\zeta(\tfrac{1}{2}+it)| \,\d t, \nonumber\\
	    &H_2:= 2\int\limits_1^T \left[A\!\left(t+\Delta\right)-A(t)\right]
	    \left[\log \left|\zeta\!\left(\tfrac{1}{2}+it+i\Delta\right)\right|-\log \left|\zeta(\tfrac{1}{2}+it)\right|\right]\, \d t,\nonumber\\
	    &R_1:= \int\limits_1^T |B(t)|^2\, \d t, \ \ \ \ \ \ \ \ \ \ 
	    R_2:= \int\limits_1^T\left|B\!\left(t+\Delta\right)-B(t)\right|^2\, \d t.
	\end{align}
	In the next result, we use Lemma \ref{lem:represLogZeta} to write the objects in Theorems \ref{thm:variance} and \ref{thm:short-intervals} in terms of the above expressions $G_i,\, H_i,\, $ and $R_i.$
	\begin{lemma}\label{lem:represFinal} Assume RH. 
	Let $4\le x\le T$ and let $0 < \Delta \ll T^b$, where $b<\frac{1}{2}$. Then, as $T\to\infty$, we have
	\begin{align*}
	    &\textup{(a)} \ \int\limits_1^T \log^2 |\zeta(\tfrac{1}{2}+it)| \,\d t = G_1+H_1+R_1 -T\log^2 T\frac{\log^2 2}{4\log^2 x} 
	    +O\!\left(\frac{T\log T}{\log^2 x}
	    \right) + O\!\left(\frac{\sqrt{xR_1} }{\log^2 x}\right);\\
	    &\textup{(b)} \  
	    \int\limits_1^T \left[\log \left|\zeta\!\left(\tfrac{1}{2}+i t+i\Delta\right)\right|-\log \left|\zeta(\tfrac{1}{2}+i t)\right|\right]^2 \d t 
	    = 
	    G_2 + H_2 + R_2 +  O\!\left(\frac{T}{\log^4 x}\right) + O\!\left(\frac{\sqrt{TR_2}}{\log^2 x}\right)
	    .
	\end{align*}
	\end{lemma}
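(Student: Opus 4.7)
The plan is to substitute the representation from Lemma \ref{lem:represLogZeta} into the integrals in question, use an algebraic identity to isolate $G_i$, $H_i$, and $R_i$, and then carefully evaluate (resp.\ bound) the remaining cross-terms. Write, for $t\ge 1$ with $t\neq \gamma$,
\[
\log|\zeta(\tfrac12+it)| = A(t) + B(t) + M(t) + E(t), \qquad M(t) := \frac{\log 2 \cdot \log(t/2\pi)}{2\log x},
\]
where $|E(t)|\ll x^{1/2}/(t\log^2 x)$. The algebraic identity
\[
\log^2|\zeta| = 2A\log|\zeta| - A^2 + B^2 + 2BM + 2BE + M^2 + 2ME + E^2
\]
integrates, using \eqref{eq:GHR_defs}, to
\[
\int_1^T \log^2|\zeta(\tfrac12+it)| \,\dt = G_1 + H_1 + R_1 + 2\int_1^T B(t)M(t)\,\dt + \int_1^T M(t)^2\,\dt + \mathcal{E}_1,
\]
where $\mathcal{E}_1$ collects the three cross-terms involving $E$.

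The heart of part (a) is the precise evaluation of $2\int BM + \int M^2$. A direct expansion of $\log^2(t/2\pi)$ gives $\int_1^T M(t)^2\,\dt = T\log^2 T \cdot \log^2 2/(4\log^2 x) + O(T\log T/\log^2 x)$. For the cross-term, I would expand $B(t)=-\sum_\gamma h[(\gamma-t)\log x]$, exchange sum and integral, and substitute $u=(\gamma-t)\log x$. Lemma \ref{lem:functions}(c) yields $\int_\R h = \widehat{h}(0) = \pi g(0) = \pi\log 2$ (since $g(0) = \ln 2$), while $\int_\R u\, h(u)\,\du = 0$ by evenness. Expanding $\log(t/2\pi) = \log(\gamma/2\pi) + O(|u|/(\gamma\log x))$ and combining with the Riemann--von Mangoldt estimate $\sum_{0<\gamma\le T}\log(\gamma/2\pi) = \frac{T}{2\pi}\log^2 T + O(T\log T)$, we obtain
\[
2\int_1^T B(t)M(t)\,\dt = -\frac{T\log^2 T \cdot \log^2 2}{2\log^2 x} + O\!\left(\frac{T\log T}{\log^2 x}\right),
\]
which combined with $\int M^2$ gives the claimed $-T\log^2 T\cdot \log^2 2/(4\log^2 x)$ up to admissible error. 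The error $\mathcal{E}_1$ is handled by Cauchy--Schwarz: $\int E^2 = O(x/\log^4 x)$ gives $|\int BE|\le \sqrt{R_1\int E^2} = O(\sqrt{xR_1}/\log^2 x)$, and $|M|\ll \log T/\log x$ gives $\int|ME|\ll x^{1/2}\log^2 T/\log^3 x$, both comfortably within the stated bounds.

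For part (b), apply Lemma \ref{lem:represLogZeta} at $t$ and $t+\Delta$ and subtract, writing $\widetilde{A}(t):=A(t+\Delta)-A(t)$ and similarly for $\widetilde{B},\widetilde{M},\widetilde{E}$. The same algebraic identity yields
\[
\int_1^T \!\left[\log|\zeta(\tfrac12+it+i\Delta)|-\log|\zeta(\tfrac12+it)|\right]^2 \,\dt = G_2 + H_2 + R_2 + 2\!\int_1^T\! \widetilde{B}\widetilde{M}\,\dt + \!\int_1^T\! \widetilde{M}^2\,\dt + \widetilde{\mathcal{E}}.
\]
The key difference from (a) is that $\widetilde{M}(t) = \frac{\log 2}{2\log x}\log(1+\Delta/t)$ is small: a direct computation gives $\int_1^T \widetilde{M}^2\,\dt \ll \Delta/\log^2 x$, which is $O(T/\log^4 x)$ since $\Delta \ll T^b$ with $b<1/2$. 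Cauchy--Schwarz then bounds $\int \widetilde{B}\widetilde{M}$ by $O(\sqrt{TR_2}/\log^2 x)$, and the remaining terms in $\widetilde{\mathcal{E}}$ are controlled analogously using $\int \widetilde{E}^2 = O(x/\log^4 x)$. The hard part will be the cancellation in (a): it requires the exact identity $\widehat{h}(0) = \pi\log 2$ from Lemma \ref{lem:functions}, together with precise tracking of boundary effects when $\gamma$ lies within $O(1/\log x)$ of the endpoints of $[1,T]$, where the truncation of $\int_\R h$ in the change of variables becomes significant.
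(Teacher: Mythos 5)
Your proposal is correct, and it reaches the stated error terms, but it takes a genuinely different route from the paper for the main cancellation in part (a). The paper does not expand $\log^2|\zeta(\frac12+it)|$ directly: it rearranges Lemma \ref{lem:represLogZeta} as $B(t)+O(x^{1/2}/(t\log^2 x))=\log|\zeta(\tfrac12+it)|-A(t)-M(t)$ and squares, so the zero-sum $B$ never meets $M$; the only cross terms are $\int_1^T A(t)M(t)\,\d t\ll \sqrt{x}\log T/\log x$ (via $\int_1^T n^{it}\log t\,\d t\ll\log T$) and $\int_1^T M(t)\log|\zeta(\tfrac12+it)|\,\d t\ll \log^2T/\log x$ (via the RH bound $\int_1^T\log|\zeta(\tfrac12+it)|\,\d t\ll\log T$), and the term $-T\log^2 2\,\log^2T/(4\log^2x)$ is simply $-\int_1^T M^2$ carried to the other side. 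You instead keep the cross term $2\int_1^T BM$ and evaluate it as a main term $-T\log^2 2\,\log^2T/(2\log^2x)+O(T\log T/\log^2 x)$ using $\widehat h(0)=\pi g(0)=\pi\log 2$ and $\sum_{0<\gamma\le T}\log(\gamma/2\pi)=\tfrac{T}{2\pi}\log^2T+O(T\log T)$, which then combines with $+\int_1^T M^2$ to give the same net constant; this is consistent (indeed $2\int BM\approx-2\int M^2$ once the other cross terms are known to be small), and your Cauchy--Schwarz treatment of the $E$-terms reproduces the stated $O(\sqrt{xR_1}/\log^2x)$ and $O(\sqrt{TR_2}/\log^2x)$ errors. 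What each buys: the paper's arrangement defers all analysis of sums over zeros to the later $R_i$ lemmas and leans on the known RH estimate for $\int\log|\zeta|$, while yours avoids that estimate at the cost of justifying the interchange of the sum over all zeros with the integral and the endpoint truncation in $2\int BM$ (routine here, since $h\in L^1$ and $h(v)\ll v^{-2}$, so no $T_n$-type device is needed; note also that $\int_{\R}|u\,h(u)|\,\d u$ diverges, so argue with the truncated expansion $\log(t/2\pi)=\log(\gamma/2\pi)+O(|u|/(\gamma\log x))$ rather than with oddness of $uh(u)$). Two minor points: in part (b) your bound $\int_1^T\widetilde M^2\,\d t\ll\Delta/\log^2x$ should carry an extra factor $\log^2(2+\Delta)$ from the range $t\le\Delta$, and the paper instead absorbs $\widetilde M(t)\ll_\varepsilon T^{1/2-\varepsilon}/(t\log x)$ (mean value theorem) directly into the $E$-type error; either way the conclusion $O(T/\log^4x)$ stands since $\Delta\ll T^b$ with $b<\tfrac12$.
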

	\begin{proof}
By rearranging the terms in Lemma \ref{lem:represLogZeta}, we have
\begin{equation*}
\begin{split}
 B(t) + O\!\left(\frac{x^{1/2}}{t\log^2 x}\right) =\log|\zeta(\tfrac{1}{2}+it)|  &-  A(t) -  \frac{\log 2\log\frac{t}{2\pi}}{2\log x}. 
\end{split}
\end{equation*}
Squaring the above expression and then integrating from $1$ to $T$ yields
\begin{align}\label{prop2aStep3}
R_1+ O&\!\left(\frac{\sqrt{xR_1}}{\log^2 x}\right) + O\!\left( \frac{x}{ \log^4 x}\right)\nonumber\\
&=\int\limits_1^T \log^2 |\zeta(\tfrac{1}{2}+it)| \, \d t -   H_1 - G_1 + 
\frac{\log^2 2}{4} \frac{T\log^2 T}{\log^2 x}+O\!\left( \frac{T\log T}{\log^2 x}\right)  
\nonumber\\
&\quad +O\!\left( \frac{1}{\log x}\left|\int\limits_1^TA(t) \log t \, \d t \right|\right) + O\!\left(\frac{1}{\log x} \left|\int\limits_1^T \log |\zeta(\tfrac{1}{2}+it)|\log t \, \d t\right|\right),
\end{align}
where we used the Cauchy-Schwarz inequality to bound the first error term on the left-hand side.
For the second error term on the right-hand side of \eqref{prop2aStep3}, since $f(v)$ is uniformly bounded for all $v \in [0,1]$, $|\cos(v)|\leq 1$ for all $v\in \mathbb{R}$, and  $ \int\limits_1^T n^{it} \log t \, \d t\ll\log T$ for $n\ge 2$, 
we see that
\begin{equation*}\label{prop2aStep7}
\begin{split}
\int\limits_1^TA(t)\frac{\log t}{\log x} \, \d t  &\ll  \frac{\log T}{\log x} \sum\limits_{n \leq x} \frac{\Lambda(n)}{n^{1/2}\log n} \ll \frac{\sqrt{x}}{\log x} \log T.
\end{split}
\end{equation*}
To control the last error term on the right-hand side of \eqref{prop2aStep3}, consider the antiderivative of $\log |\zeta(\tfrac{1}{2}+it)|$. Assuming RH, it is known that 
\[ 
\int\limits_1^T \log |\zeta(\tfrac{1}{2}+it)| \, \d t\ll \log T.
\]
(See \cite[Lemma 2.2]{BLM} for a slightly stronger estimate.) Thus, using integration by parts, we obtain
\begin{equation*}\label{prop2aStep8}
\begin{split}
\frac{1}{\log x} \int\limits_1^T \log |\zeta(\tfrac{1}{2}+it)|\log t \, \d t
\ll \frac{\log^2 T}{\log x}.
\end{split}
\end{equation*}
By combining and rearranging all the calculations for the terms in \eqref{prop2aStep3}, 
we complete the proof of part (a). 

For the proof of part (b), since $\Delta \ll T^b$,
we observe that for $t>1$, $x\geq 4$, and $\varepsilon>0$ the mean-value theorem implies that
\begin{align}\label{prop2bStep4}
\frac{\log 2\left[\log\!\left(\tfrac{t+\Delta}{2\pi}\right) -  \log \tfrac{t}{2\pi}\right]}{2\log x} 
\ll_\varepsilon  \frac{T^{\frac{1}{2}-\varepsilon}}{t\log x}, 
\nonumber
\end{align}
 so that this term is absorbed into the error bound. The rest of the proof of part (b) is analogous to the proof of part (a). Consequently, the proof is complete.
\end{proof}

In order to conclude the proofs of Theorems \ref{thm:variance} and \ref{thm:short-intervals}, the following sections are devoted to estimating the quantities $G_i$, $H_i$ and $R_i$.
	
\section{Contributions from the zeros}\label{sec:zeros}
\subsection{Auxilliary lemmas}
Before we compute $R_i$, we remark that the constant $a$ as defined in \eqref{GoldstonConstant} actually has a mild dependence on $T$, since it is defined in terms of $F(\alpha, T)$. In this subsection, we collect several useful technical estimates regarding the zeros and the function $F_\Delta(\alpha, T)$, and we show that this dependence on $T$ can be controlled in the proofs of our main theorems.

\begin{lemma}\label{prop:BoundedOnAverage}
Assume RH. Let $T\ge 4$ and $\Delta=O(\log^{2} T)$. Then, for $\beta> 0$, we have
\begin{equation*}
    \int\limits_0^\beta \Big( 2F(\alpha)-F_\Delta(\alpha)-F_{-\Delta}(\alpha) \Big) \, \d \alpha \ll (1+\beta)\left(1+\frac{|\Delta|}{\log^2 T} \right)
\end{equation*}
 and   
\begin{equation*}
  \int\limits_1^\infty \frac{2F(\alpha)-F_\Delta(\alpha)-F_{-\Delta}(\alpha)}{\alpha^2}\, \d \alpha \ll 1+\frac{|\Delta|}{\log^2 T},
\end{equation*}
where the implied constants are universal.
\end{lemma}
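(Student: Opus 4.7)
The plan is to split each integral at $\alpha=1$, using the asymptotic formulas \eqref{eq:F_asymptotic} and \eqref{eq:F_delta_asymptotic} on $[0,1]$ and a universal upper bound for $\alpha>1$. The key preliminary observation, obtained by writing $w$ as the Fourier transform of $\xi\mapsto 2\pi e^{-4\pi|\xi|}$, is that
\begin{equation*}
F_\Delta(\alpha)=\frac{4\pi^2}{T\log T}\,T^{-i\alpha\Delta}\int_{\R}e^{-4\pi|\xi|}e^{-2\pi i\Delta\xi}\bigl|S_\alpha(\xi)\bigr|^2\,\d\xi,\qquad S_\alpha(\xi):=\sum_{0<\gamma\le T}T^{i\alpha\gamma}e^{2\pi i\gamma\xi},
\end{equation*}
from which the triangle inequality immediately yields $|F_\Delta(\alpha)|\le F(\alpha)$, and hence the pointwise bound $|2F(\alpha)-F_\Delta(\alpha)-F_{-\Delta}(\alpha)|\le 4F(\alpha)$.

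On $[0,\min(\beta,1)]$ I would insert \eqref{eq:F_asymptotic} and \eqref{eq:F_delta_asymptotic} directly. The terms $T^{-2\alpha}\log T$ cancel exactly between $2F(\alpha)$ and $F_\Delta(\alpha)+F_{-\Delta}(\alpha)$, leaving the main term $2\alpha\bigl(1-w(\Delta)\cos(\alpha\Delta\log T)\bigr)$, which is non-negative, bounded above by $4\alpha$, and thus integrates to $O(1)$. Of the remaining error terms, only the piece $O_\varepsilon\bigl((\Delta+1)T^{-\alpha(1/2-\varepsilon)}/\log T\bigr)$ appearing in \eqref{eq:F_delta_asymptotic} carries any $\Delta$-dependence; integrating it over $[0,1]$ produces a contribution of size $O\bigl((\Delta+1)/\log^2 T\bigr)$, which is exactly the source of the claimed $|\Delta|/\log^2 T$ factor. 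The other error terms integrate to $o(1)$.

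For $\alpha>1$ I would apply the universal bound $|2F-F_\Delta-F_{-\Delta}|\le 4F(\alpha)$. The second integral then follows immediately from the standard estimate $\int_1^\infty F(\alpha)/\alpha^2\,\d\alpha=O(1)$ under RH, since this is precisely the convergent integral already appearing in Goldston's constant \eqref{GoldstonConstant}. For the first integral when $\beta>1$, the goal is to show $\int_1^\beta F(\alpha)\,\d\alpha\ll 1+\beta$, which I would do by integrating the double sum in \eqref{FFunctionAlphaT} termwise: the diagonal $\gamma=\gamma'$ contributes $(2\pi N(T)/(T\log T))(\beta-1)=\beta-1+o(\beta)$, and the off-diagonal is controlled using the density bound $N(t+1)-N(t)\ll\log t$ together with the decay of $w$.

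The main obstacle will be the bookkeeping on $[0,1]$ needed to extract the sharp $\Delta$-dependence: all but one of the error terms from \eqref{eq:F_asymptotic} and \eqref{eq:F_delta_asymptotic} are $\Delta$-independent, and the clean factor $|\Delta|/\log^2 T$ emerges only from the single $(\Delta+1)T^{-\alpha(1/2-\varepsilon)}/\log T$ error in \eqref{eq:F_delta_asymptotic}, so this term must be tracked precisely through the $\alpha$-integration. The hypothesis $\Delta=O(\log^2 T)$ then keeps the resulting factor uniformly bounded for all subsequent applications.
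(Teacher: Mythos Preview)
Your overall strategy is essentially the paper's: split at $\alpha=1$, use the asymptotics \eqref{eq:F_asymptotic} and \eqref{eq:F_delta_asymptotic} on $[0,1]$ (where the $\Delta$-dependent error in \eqref{eq:F_delta_asymptotic} indeed produces the $|\Delta|/\log^2 T$ factor), and handle $\alpha>1$ via an average bound for $F$. The one structural difference is that the paper, rather than using your pointwise majorant $|2F-F_\Delta-F_{-\Delta}|\le 4F$, records the stronger identity
\[
2F(\alpha)-F_\Delta(\alpha)-F_{-\Delta}(\alpha)=\frac{8\pi^2}{T\log T}\int_{\R}e^{-4\pi|u|}\bigl[1-\cos(\Delta\alpha\log T+2\pi\Delta u)\bigr]\bigl|S_\alpha(u)\bigr|^{2}\,\d u\ \ge\ 0,
\]
and then reruns Goldston's Lemma~A argument directly on this non-negative function. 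Your route via $|G|\le 4F$ plus the \emph{known} bounds for $F$ is equally valid and arguably cleaner; you lose only an immaterial constant.

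There is, however, one step that would fail as written: your proposed proof of $\int_1^\beta F(\alpha)\,\d\alpha\ll 1+\beta$ by integrating the double sum \eqref{FFunctionAlphaT} termwise. Integrating the off-diagonal produces
\[
\frac{1}{T\log^2 T}\sum_{\substack{0<\gamma,\gamma'\le T\\ \gamma\ne\gamma'}}\frac{w(\gamma-\gamma')}{|\gamma-\gamma'|},
\]
and the density bound $N(t+1)-N(t)\ll\log t$ together with the decay of $w$ does \emph{not} control this: there is no a-priori lower bound on $|\gamma-\gamma'|$, so pairs with very small gap contribute unboundedly. The correct argument is Goldston's Lemma~A itself (quoted in the paper as \eqref{eq:F_bounded}), whose proof avoids this obstruction by convolving $F$ with a Fej\'er-type kernel and exploiting positivity rather than integrating term by term. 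Simply cite that result and your proof goes through.
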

\begin{proof}
Consider the identity
\begin{equation}\label{eq:F_delta-id-positive}
    2F(\alpha) -F_{\Delta}(\alpha)-F_{-\Delta}(\alpha) = 
\frac{8\pi^2}{T\log T} \int\limits_{-\infty}^\infty e^{-4\pi |u|}\left[1-\cos\!\left(\Delta \alpha\log T+2\pi\Delta u\right)\right] \left|\sum_{0<\gamma\le T} T^{i\alpha \gamma}e^{2\pi i u \gamma}
    \right|^2 \d u.
\end{equation}
In particular, $2F(\alpha)-F_\Delta(\alpha)-F_{-\Delta}(\alpha)\ge 0$. Lemma \ref{prop:BoundedOnAverage} follows by modifying an argument of Goldston \cite[Lemma A]{Go} in a straightforward manner and applying Chan's theorem for $F_\Delta(\alpha)$ in the form given in \eqref{eq:F_delta_asymptotic}.
\end{proof}
\begin{lemma}\label{lem:ZeroEstimate}
 Let $T\ge 4$, $0\le |\Delta| \le T$, $0<H\le T$, and $w(u)=\frac{4}{4+u^2}$. Then, 
 \begin{align*}
	    &\textup{(a)} \sum_{0<\gamma, \gamma'\le T}|\gamma-\gamma'-\Delta|  \, w(\gamma-\gamma'-\Delta) \ll T\log^3 T;\\
	    &\textup{(b)} \sum_{T-|\Delta|-1\le \gamma, \gamma'\le T+H} w(\gamma-\gamma'-\Delta)\ll (H+|\Delta|+1)\log^2 T;\\
	    &\textup{(c)}\sum_{\substack{0< \gamma<T-|\Delta|-1
	    \\ T\le \gamma'\le T+H}} w(\gamma-\gamma'-\Delta)\ll \log^3 T;\\
	\end{align*}
 where the implied constants are universal.
\end{lemma}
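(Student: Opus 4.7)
The proof plan for all three bounds rests on two standard ingredients: the density estimate $N(t+1) - N(t) \ll \log(|t|+2)$, which implies that any unit interval up to height $T$ contains $O(\log T)$ ordinates $\gamma$; and the rapid decay of $w(u) = 4/(4+u^2)$.

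For (a), I would fix $\gamma$ and split the inner sum over $\gamma'$ dyadically in $|u| = |\gamma-\gamma'-\Delta|$. Since $|u| w(u) = 4|u|/(4+u^2) \ll \min(|u|, 1/|u|)$, each shell $|u| \in [2^j, 2^{j+1})$ contains $O(2^j \log T)$ zeros $\gamma'$ and thus contributes $O(\log T)$, and there are only $O(\log T)$ nontrivial shells (as $|u| \ll T$). Hence $\sum_{\gamma'} |u|w(u) \ll \log^2 T$ for each fixed $\gamma$, and summing over the $O(T\log T)$ zeros $\gamma \le T$ yields (a).

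Part (b) is the simplest: the uniform bound $\sum_{\gamma'} w(\gamma-\gamma'-\Delta) \ll \log T$, obtained by partitioning $\gamma'$ into unit intervals near integer translates of $\gamma-\Delta$ and combining the density estimate with the convergence of $\sum_k 1/(4+k^2)$, together with the fact that the interval $[T-|\Delta|-1, T+H]$ contains $O((H+|\Delta|+1)\log T)$ ordinates $\gamma$, immediately produces the stated bound.

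Part (c) is the most delicate, since the target bound must be independent of $H$. The key geometric observation is that under the stated constraints, setting $r = T-|\Delta|-1-\gamma \ge 0$ and $s = \gamma'-T \ge 0$, one has $|\gamma-\gamma'-\Delta| \ge r+s+1$ regardless of the sign of $\Delta$. (When $\Delta \ge 0$ one actually obtains $r+s+2\Delta+1$, but this strengthening is not needed.) Consequently, for fixed $\gamma$,
\[
\sum_{\gamma' \in [T,\,T+H]} w(\gamma-\gamma'-\Delta) \ll \log T \int_0^{\infty} \frac{\dd s}{4+(s+r+1)^2} \ll \frac{\log T}{r+1}.
\]
Summing over $\gamma$ and applying the density estimate once more gives $\log T \cdot \log T \cdot \sum_{r=0}^{T} (r+1)^{-1} \ll \log^3 T$, as required. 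The main conceptual obstacle across the three parts is recognizing and exploiting this positive gap in (c); without it one would only obtain a bound that grows linearly with $H$.
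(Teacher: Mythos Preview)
Your proposal is correct and follows essentially the same approach as the paper. The only cosmetic differences are that in (a) you use a dyadic decomposition of $|u|=|\gamma-\gamma'-\Delta|$ while the paper splits at $|u|=2$ and sums $\sum_n \tfrac{\log n}{n-\gamma}$ directly, and in (c) you parametrize with $r,s$ to handle both signs of $\Delta$ simultaneously whereas the paper first reduces to $\Delta\ge 0$ by the symmetry $\gamma\leftrightarrow\gamma'$, $\Delta\leftrightarrow -\Delta$; the key geometric input (namely $|\gamma-\gamma'-\Delta|\ge 1+$ distance of $\gamma$ to the boundary) and the resulting $\sum_\gamma (r+1)^{-1}\ll \log^2 T$ are identical.
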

\begin{proof}
 By interchanging $\gamma$ and $\gamma'$, we may assume that $\Delta\ge0$. We have
 \begin{align*}
  \sum_{0<\gamma, \gamma'\le T}|\gamma-\gamma'-\Delta| \, w(\gamma-\gamma'-\Delta) =& \sum_{\substack{0<\gamma, \gamma'\le T \\ \gamma-\gamma'-\Delta<0}}(\Delta+\gamma'-\gamma) \, w(\gamma-\gamma'-\Delta) \\
  +& \sum_{\substack{0<\gamma, \gamma'\le T \\ \gamma-\gamma'-\Delta>0}}(\gamma-\gamma'-\Delta) \, w(\gamma-\gamma'-\Delta) \\=& Z_1+Z_2,
 \end{align*}
 say. We use the inequality 
 \begin{equation*}
  \frac{4|u|}{4+u^2}\le \min\left(1, \frac{4}{|u|}\right)
 \end{equation*}
  and the fact that there are $O(\log T)$ zeros in the interval $[T-\Delta-2,T-\Delta]$ to estimate $Z_1$ as follows:
 \begin{align*}
  Z_1 &\le  \sum_{0<\gamma\le T}\ \sum_{\gamma-\Delta<\gamma'<\gamma-\Delta+2}1 
  +  \sum_{0<\gamma\le T}\ \sum_{\gamma-\Delta+2<\gamma'\le T} \frac{4}{\Delta+\gamma'-\gamma}\\
  &\ll \sum_{0<\gamma\le T}\log T + \sum_{0<\gamma\le T}\,\sum_{\gamma+2<n\le T+\Delta}\frac{\log n}{n-\gamma} \\
  &\ll T\log^2T + \sum_{0<\gamma\le T}\log^2(\Delta +T)\\
  &\ll T\log^3 T,  
 \end{align*}
 since $\Delta\le T$. The bound $Z_2\ll T\log^3 T$ is similar. This proves part (a).
 
\smallskip 
 
For part (b), since $0<H\le T$, we use that for $0\le n \le T+H$) there are $O(\log T)$ zeros in the interval $(n,n+1)$ to obtain:

\begin{align*}
 \sum_{T-|\Delta|-1\le \gamma, \gamma'\le T+H} w(\gamma-\gamma'-\Delta) &\ll 
 \log T \sum_{T-|\Delta|-1\le \gamma\le T+H} \ \sum_{0\le n\le H+|\Delta|+1} \frac{1}{1+(\gamma -T+|\Delta|+1-n-\Delta)^2} \\
 &\ll \log T \sum_{T-|\Delta|-1\le \gamma\le T+H} 1 \ll (H+|\Delta|+1)\log^2 T.
\end{align*}
In the last line, since the summand is positive, we may bound the sum over $n$ by a sum over all integers and then use the fact that  function 
\[
 \sum_{n\in\Z} \frac{1}{1+(x+n)^2}
\]
converges to a continuous periodic function of $x\in\R$. In particular, it is uniformly bounded.

\smallskip

For part (c), note that, for $0<\gamma<T-|\Delta|-1$ and $T\le \gamma'\le T+H$, we have $|\gamma -\gamma'-\Delta|\ge T+\Delta-\gamma\ge 1.$ Then, using that $w(u)\le\frac{4}{u^2}$, we have
\begin{align*}
 \sum_{\substack{0< \gamma<T-|\Delta|-1
	    \\ T\le \gamma'\le T+H}} w(\gamma-\gamma'-\Delta) &\ll 
	    \log T\sum_{0<\gamma<T-|\Delta|-1}\ \sum_{0\le n\le H+1} \frac{1}{(T+n+\Delta-\gamma)^2}\\
	    &\ll \log T\sum_{0<\gamma<T-|\Delta|-1}\frac{1}{T+\Delta-\gamma}\\ 
	    &\ll \log T \sum_{1\le n\le T+|\Delta|}\frac{\log n}{n} \ll \log^3 T,
\end{align*}
since $|\Delta|\le T$. This completes the proof of the lemma.
\end{proof}

\begin{lemma}\label{lem:a(T)} Assume RH. For $T\ge 4$, let $|\Delta|\le \log^{2} T$, and let $0<H\le T$. We have
\begin{align*}
     &(\textup{a})\  \int_1^\infty \frac{F(\alpha,T\!+\!H)}{\alpha^2}\, \d \alpha = \int_1^\infty \frac{F(\alpha,T)}{\alpha^2}\, \d \alpha + O\Big( \frac{(H+1)}{T} \log^3T \Big); \\
     &(\textup{b})\  \int_1^\infty \frac{2F(\alpha,T\!+\!H)-F_\Delta(\alpha,T\!+\!H)-F_{-\Delta}(\alpha,T\!+\!H)}{\alpha^2}\, \d \alpha = \int_1^\infty \frac{2F(\alpha,T)-F_\Delta(\alpha,T)-F_{-\Delta}(\alpha,T)}{\alpha^2}\, \d \alpha \\
     &\hspace{9.4cm}+ O\Big( \frac{(H+|\Delta|+1)}{T} \log^3T \Big). 
\end{align*}
Here, the implied constants are universal.
\end{lemma}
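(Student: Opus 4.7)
The strategy is to swap the order of the $\alpha$-integration and the sum over pairs of zeros, so that the dependence on $T$ is carried by a slowly-varying kernel. Writing $P = T + H$ and substituting $\xi = \alpha \log T$ in the integral, one finds
\[
\int_1^\infty \frac{F(\alpha,T)}{\alpha^2}\,\d\alpha \;=\; \frac{2\pi}{T}\sum_{0<\gamma,\gamma'\le T} w(\gamma-\gamma')\,L(\gamma-\gamma',T),
\]
where $L(u,T) := \int_{\log T}^\infty \xi^{-2}e^{i\xi u}\,\d\xi$ satisfies $|L(u,T)| \le 1/\log T$ uniformly in $u$, and the analogous identity holds for $\int_1^\infty F_\Delta(\alpha,T)/\alpha^2\,\d\alpha$ with $\gamma-\gamma'$ replaced by $\gamma-\gamma'-\Delta$. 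A key auxiliary ingredient is the bound $\sum_{0<\gamma,\gamma'\le T} w(\gamma-\gamma'-\Delta) \ll T\log^2 T$, valid uniformly for $|\Delta| \le \log^2 T$, which follows from evaluating \eqref{eq:F_asymptotic} and \eqref{eq:F_delta_asymptotic} at $\alpha = 0$.

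For part (a), I would decompose the difference of the two integrals into three natural pieces: (i) the shift of the normalizing factor, $2\pi(P^{-1} - T^{-1})\,\Sigma(T)$, where $\Sigma(T)$ denotes the inner sum; (ii) the change of the kernel $L$ for pairs with both ordinates in $(0, T]$, governed by $|L(u,P) - L(u,T)| \le 1/\log T - 1/\log P \ll H/(T\log^2 T)$; and (iii) the contribution of ``new pairs'' with at least one ordinate in $(T, P]$. For (i), the bound $|\Sigma(T)| \ll T\log T$ (from $|L|\le 1/\log T$ and $\sum w \ll T\log^2 T$) gives $O(H\log T/T)$. For (ii), combining the $L$-difference bound with $\sum w \ll T\log^2 T$ and the factor $2\pi/P \ll 1/T$ yields $O(H/T)$. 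For (iii), the new pairs split into a ``border'' set with both ordinates in $(T-1, P]$ and two symmetric ``cross'' sets with one ordinate in $(0, T-1)$ and the other in $(T, P]$; parts (b) and (c) of Lemma \ref{lem:ZeroEstimate} (with $\Delta = 0$) bound the corresponding $w$-sums by $\ll (H+1)\log^2 T$ and $\ll \log^3 T$, which after accounting for $|L|\le 1/\log T$ and $2\pi/P \ll 1/T$ become $O((H+1)\log^2 T/T)$. Summing the three pieces gives the desired bound $O((H+1)\log^3 T/T)$.

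Part (b) is obtained by running the same argument for $\int_1^\infty F_\Delta(\alpha,T)/\alpha^2\,\d\alpha$ and $\int_1^\infty F_{-\Delta}(\alpha,T)/\alpha^2\,\d\alpha$: the border band widens to $(T-|\Delta|-1, P]$, so parts (b) and (c) of Lemma \ref{lem:ZeroEstimate} now produce $w$-sum bounds $\ll (H+|\Delta|+1)\log^2 T$ and $\ll \log^3 T$, contributing $O((H+|\Delta|+1)\log^2 T/T)$ in piece (iii); pieces (i) and (ii) remain $O(H\log T/T)$ and $O(H/T)$. Applying the triangle inequality to the linear combination $2F - F_\Delta - F_{-\Delta}$ produces the claimed bound $\ll (H+|\Delta|+1)\log^3 T/T$. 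The only technical care needed is to verify that the ``border'' and ``cross'' cases of Lemma \ref{lem:ZeroEstimate} cleanly cover all pairs with at least one ordinate in $(T, P]$ (they do, since the border case $[T-|\Delta|-1,P]$ and the cross case $\gamma < T-|\Delta|-1$ are disjoint and exhaustive in the first coordinate); the remainder is mechanical bookkeeping.
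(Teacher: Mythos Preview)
Your argument is correct, and it takes a genuinely different route from the paper's. The paper first establishes a \emph{pointwise} estimate
\[
F_\Delta(\alpha,T+H)=F_\Delta(\alpha,T)+O\!\left(\frac{(|\alpha|+1)(H+|\Delta|+1)\log^2 T}{T}\right),
\]
obtained by applying the mean-value theorem to $(T+H)^{i\theta}-T^{i\theta}$ (giving the $|\alpha|$-factor) and handling the new pairs via Lemma~\ref{lem:ZeroEstimate} exactly as you do. Because of the $|\alpha|$-growth, this pointwise bound is not directly integrable against $\alpha^{-2}$ on $[1,\infty)$, so the paper must truncate at $\alpha=T$ using the tail estimate $\int_\beta^\infty F(\alpha,T)\alpha^{-2}\,\d\alpha\ll 1/\beta$ (from \cite[Lemma~A]{Go}, resp.\ Lemma~\ref{prop:BoundedOnAverage} for part~(b)), apply the pointwise bound on $[1,T]$, and then re-extend. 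The extra factor $\int_1^T(|\alpha|+1)\alpha^{-2}\,\d\alpha\asymp\log T$ is what produces $\log^3 T$ in the stated error.

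Your approach bypasses both the pointwise step and the truncation: by carrying out the $\alpha$-integral first you replace the oscillatory factor $T^{i\alpha u}$ by the uniformly bounded kernel $L(u,T)$, and the kernel difference $|L(u,P)-L(u,T)|\le 1/\log T-1/\log P\ll H/(T\log^2 T)$ captures the ``change of base'' cleanly without any $\alpha$-dependence. The remaining work---bounding the $w$-sum over new pairs via parts (b) and (c) of Lemma~\ref{lem:ZeroEstimate}---is common to both proofs. As a bonus, your method actually yields the sharper error $O((H+|\Delta|+1)\log^2 T/T)$, one logarithm better than stated. One small remark: the auxiliary bound $\sum_{0<\gamma,\gamma'\le T}w(\gamma-\gamma'-\Delta)\ll T\log^2 T$ that you invoke can be read off more simply from $|F_\Delta(0,T)|\ll\log T$ (valid for all $|\Delta|\le T$, see the paper's proof), rather than from the full asymptotic \eqref{eq:F_delta_asymptotic}.
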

\begin{proof}
First, we prove a pointwise estimate for $F_\Delta(\alpha, T+H)$ that holds in the larger range $|\Delta|\le T$ and is useful for both parts (a) and (b). By the mean-value theorem, for $\theta\in \R$, we have
 \begin{equation*}\label{eq:mvt-F}
  |(T+H)^{i\theta}-T^{i\theta}|\ll \frac{H|\theta|}{T} \ \ \ \ \text{and} \ \ \ \ \frac{1}{(T+H)\log(T+H)}=\frac{1}{T\log T}\left(1+O\left(\frac{H}{T}\right)\right).
 \end{equation*}
Therefore, for $\Delta\in\R$ with $|\Delta|\le T$, we have
\begin{equation*}
 F_\Delta(\alpha, T+H) = \frac{2\pi}{T\log T}\sum_{0<\gamma,\gamma'\le T+H}(T+H)^{i\alpha(\gamma-\gamma'-\Delta)}w(\gamma-\gamma'-\Delta) + O\left(\frac{H}{T}F_\Delta(0,T+H)\right).
\end{equation*}
To bound the last error term, one can see that $|F_\Delta(\alpha,T)|\le F_\Delta(0,T) \ll \log T$ (uniformly for $0\le |\Delta|\le T$), analogously to the classical bound for $\Delta=0$. Now, to estimate the difference $|F_\Delta(\alpha, T+H)-F_\Delta(\alpha,T)|$, we separate the double sum over zeros in $F_\Delta(\alpha, T+H)$ depending on whether the zeros lie in the interval $(0,T]$ or $(T, T+H]$. Using the triangle inequality, we obtain
\begin{align*}
 |F_\Delta(\alpha,T+H)-F_\Delta(\alpha,T)| &\le \frac{2\pi}{T\log T}\sum_{0<\gamma,\gamma'\le T}\big|(T+H)^{i\alpha(\gamma-\gamma'-\Delta)}-T^{i\alpha(\gamma-\gamma'-\Delta)}\big|w(\gamma-\gamma'-\Delta) \\
 &\quad+  
 \frac{2\pi}{T\log T}\sum_{T<\gamma,\gamma'\le T+H}w(\gamma-\gamma'-\Delta)\\
 &\quad+
 \frac{2\pi}{T\log T}\sum_{\substack{0<\gamma\le T\\
 T< \gamma'\le T+H} 
 }
 w(\gamma-\gamma'-\Delta)\\
 &\quad+
 \frac{2\pi}{T\log T}\sum_{\substack{0<\gamma'\le T\\
 T< \gamma\le T+H }
 }w(\gamma-\gamma'-\Delta)+O\left(\frac{H}{T}\log T\right)\\
 &= Y_1+Y_2+Y_3+Y_4+O\left(\frac{H}{T}\log T\right),
\end{align*}
say. By the mean-value theorem and part (a) of Lemma \ref{lem:ZeroEstimate}, we find that $Y_1\ll \frac{H}{T}|\alpha|\log^2 T$. Since $w(u)\ge 0$, we may extend the sum in $Y_2$ to apply part (b) of Lemma \ref{lem:ZeroEstimate}. Therefore, $Y_2\ll \frac{1}{T}(H+|\Delta|+1)\log T.$ We estimate $Y_3$ by further dividing the sum into two parts:
\begin{align*}
 \sum_{\substack{0<\gamma\le T\\
 T< \gamma'\le T+H} 
 }
 w(\gamma-\gamma'-\Delta) &= \sum_{\substack{ T-|\Delta|-1\le\gamma\le T\\
 T< \gamma'\le T+H} 
 }
 w(\gamma-\gamma'-\Delta)
 +\sum_{\substack{0<\gamma< T-|\Delta|-1\\
 T< \gamma'\le T+H} 
 }
 w(\gamma-\gamma'-\Delta) \\
 &\ll (H+|\Delta|+1)\log^2 T + \log^3 T, 
\end{align*}
where we used that $w(u)\ge 0$ to extend the first sum and applied parts (b) and (c) of Lemma \ref{lem:ZeroEstimate}, respectively. This yields $Y_3\ll \frac{1}{T}(H+|\Delta|+1)\log^2 T$. $Y_4$ can be treated similarly to $Y_3$, since we may interchange $\gamma$ and $\gamma'$, use that $w(u)$ is even, and replace $\Delta$ with $-\Delta$. Combining the above estimates, we obtain that
\begin{equation}\label{eq:F_T+H}
 F_\Delta(\alpha,T+H)=F_\Delta(\alpha,T)+O\left(\frac{(|\alpha|+1)(H+|\Delta|+1)\log^2 T}{T}\right) ,
\end{equation}
uniformly for $\alpha\in\R$, $T\ge 4$, $0<H\le T$, and $\Delta\in\R$ with $0\le |\Delta|\le T$.\smallskip 

We now use the pointwise estimate \eqref{eq:F_T+H} to prove part (a) as follows. It is known that 
\begin{equation}\label{eq:F_bounded}
 \int_0^\beta F(\alpha,T)\,\d\alpha \ll 1+\beta, 
\end{equation}
uniformly for $\beta\ge0$ and $T\ge 4$ (see \cite[Lemma A]{Go}). Integrating by parts, for $\beta\ge 1$ this implies that 
\begin{equation*}
 \int_\beta^\infty \frac{F(\alpha, T)}{\alpha^2}\,\d\alpha \ll \frac{1}{\beta}.
\end{equation*}
Therefore, by the case $\Delta=0$ of the estimate \eqref{eq:F_T+H}, we obtain
\begin{align*}
 \int_1^\infty \frac{F(\alpha,T+H)}{\alpha^2}\,\d\alpha &=  \int_1^T \frac{F(\alpha,T+H)}{\alpha^2}\,\d\alpha + O\!\left(\frac{1}{T}\right) \\
 &= \int_1^T \frac{F(\alpha,T)}{\alpha^2}\,\d\alpha + O\!\left(\frac{(H+1)\log^3T}{T}\right)\\
 &=\int_1^\infty \frac{F(\alpha,T)}{\alpha^2}\,\d\alpha + O\!\left(\frac{(H+1)\log^3T}{T}\right). 
\end{align*}
This proves part (a). Part (b) is similar, using that $|\Delta|\le\log^{2} T$ and Lemma \ref{prop:BoundedOnAverage} in place of \eqref{eq:F_bounded}.
\end{proof}
\subsection{Unbounded discontinuities}
In this section, our goal is to express $R_i$ as a sum over pairs of zeros of $\zeta(s)$ in order to apply Montgomery's pair correlation method to estimate $R_i$. The arguments of Montgomery and Goldston consist of localizing the sum to zeros in the interval $[0,\, T]$ and then extending the integral in the definition of $R_i$ in \eqref{eq:GHR_defs} to infinity, up to small errors. However, due to the unbounded discontinuity of our weight function $h$ at the origin, their arguments do not apply directly. This leads to difficulties, and we must use a different and delicate approach to control the error terms in this case. The first part of this approach lies in the introduction of a sequence of $T_n$'s for which the following lemmas will hold. The idea of using such a sequence is classical (for instance, see \cite[Ch.17]{DP}). Since $N(T+1) - N(T)\ll \log T$, by the pigeonhole principle, for every $n\in\mathbb{N}$ we can find a sequence $\{T_n\}$ satisfying 
\begin{equation}\label{TnSequence}
n \le T < n+1 \ \text{ and } \ | \gamma - T_n|\gg \frac{1}{\log n}.
\end{equation}
In this way, we obtain similar results to Goldston on a sequence of points tending to infinity, despite the unbounded discontinuity of our function $h$. Now, we define
\begin{equation}\label{eq:k_def}
    k(\xi):=\frac{1}{\pi^2} \, \widehat{h}(\xi)^2,
\end{equation}
and we prove the following lemma.
\begin{lemma}\label{lem:unbounded_discontinuities}
Assume RH. Let $T\in \{T_n\}$, where $T_n$ satisfies \eqref{TnSequence}. Define $k$ as in \eqref{eq:k_def} and $R_i$ as in \eqref{eq:GHR_defs}. For $4\le x\le T$ and $0<\Delta\ll T^b$, with $0<b<\tfrac{1}{2}$, we have 
\begin{align*}
     &(\textup{a})\  R_1=\frac{\pi^2}{\log x}\sum_{0<\gamma,\, \gamma'\le T}\widehat{k}[(\gamma-\gamma')\log x] + O\!\left(\sqrt{T}\log^2 T\right);\\
     &(\textup{b})\  R_2 = \frac{2\pi^2}{\log x}\sum_{0 < \gamma, \gamma' \le T} \left\{\widehat{k}[(\gamma - \gamma' )\log x]-  \widehat{k}\left[\left(\gamma - \gamma' -\Delta \right)\log x\right] \right\}+  O\!\left(T\sqrt{\frac{\log \log T}{\log T}}\right).
\end{align*}
\end{lemma}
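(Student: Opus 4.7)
The plan is to expand the squares in $R_1$ and $R_2$ as double sums over pairs of zeros, apply Parseval to translate each time-integral into a Fourier-side value of $\widehat{k}$, and then carefully bound the error from truncating the zero sums to $\gamma,\gamma'\in(0,T]$ and restricting the integration from $\mathbb{R}$ down to $(1,T]$. The central new difficulty compared with Goldston's treatment of $S(t)$ is the logarithmic singularity of $h$ at the origin, which is exactly why $T$ is forced to lie in the well-separated sequence $\{T_n\}$ from \eqref{TnSequence}.

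The key computation, which produces all the $\widehat{k}$-weights appearing in the lemma, is the following Parseval identity: the substitution $u=(\gamma-t-a)\log x$ together with $h\in L^2(\mathbb{R})$ being even and the definition $k=\widehat{h}^{\,2}/\pi^2$ gives, for any real constants $a,b$,
\[
\int_{\mathbb{R}}h[(\gamma-t-a)\log x]\,h[(\gamma'-t-b)\log x]\,dt=\frac{\pi^2}{\log x}\,\widehat{k}\!\left((\gamma-\gamma'+b-a)\log x\right).
\]
For part (a), decompose $B=B_T+B_T^c$, where $B_T$ restricts the sum over zeros to $\gamma\in(0,T]$, and write
\[
R_1=\int_{\mathbb{R}}B_T^2\,dt-\int_{\mathbb{R}\setminus(1,T]}B_T^2\,dt+2\int_1^T B_T B_T^c\,dt+\int_1^T(B_T^c)^2\,dt.
\]
By the Parseval identity with $a=b=0$, the first integral is exactly $\frac{\pi^2}{\log x}\sum_{0<\gamma,\gamma'\le T}\widehat{k}((\gamma-\gamma')\log x)$, the claimed main term. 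Using the pointwise bounds $h(u)\ll 1/u^2$ for $|u|\ge 1$ and $h(u)\ll 1+\log(1/|u|)$ for small $|u|$, together with $N(s+1)-N(s)\ll\log s$ and the endpoint separation $|T_n-\gamma|\gg 1/\log T_n$, both $\int_{\mathbb{R}\setminus(1,T]}B_T^2\,dt$ and $\int_1^T(B_T^c)^2\,dt$ are of size at most $O(\log^4 T)$. Cauchy--Schwarz then dominates the cross term by $\bigl(\int_1^T B_T^2\bigr)^{1/2}\bigl(\int_1^T(B_T^c)^2\bigr)^{1/2}\ll \sqrt{T}\,\log^2 T$, matching the stated error.

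For part (b), expand $[B(t+\Delta)-B(t)]^2=B(t+\Delta)^2+B(t)^2-2\,B(t+\Delta)B(t)$ and apply the Parseval identity to each of the three pieces: the first two (with $(a,b)=(\Delta,\Delta)$ and $(0,0)$) each yield $\widehat{k}((\gamma-\gamma')\log x)$ since the common shift cancels, while the cross piece (with $(a,b)=(\Delta,0)$) yields $\widehat{k}((\gamma-\gamma'-\Delta)\log x)$ with a factor of $-2$. Collecting gives the factor $2\widehat{k}((\gamma-\gamma')\log x)-2\widehat{k}((\gamma-\gamma'-\Delta)\log x)$ in the statement. The truncation error is analyzed along the same lines as in part (a); the sharper final bound $O\bigl(T\sqrt{\log\log T/\log T}\bigr)$ comes by feeding the truncated expressions through the Goldston--Montgomery asymptotic \eqref{eq:F_asymptotic} for $F(\alpha)$ on $[0,1]$ and its $F_\Delta$ analogue \eqref{eq:F_delta_asymptotic}, whose $o(1)$ terms are both of size $O(\sqrt{\log\log T/\log T})$.

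The main obstacle will be the endpoint analysis in the error bounds: when $t$ lies within $O(1)$ of $T$, a zero $\gamma$ just outside $(0,T]$ makes the argument $(\gamma-t)\log x$ very small, and without the $\{T_n\}$ separation the logarithmic singularity of $h$ would destroy every estimate. Once $|T_n-\gamma|\gg 1/\log T_n$ is imposed, each singular term contributes only $O(\log\log T)$, after which routine zero-density manipulations complete the proof.
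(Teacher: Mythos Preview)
Your overall strategy---the Parseval identity producing the $\widehat{k}$-weights, and the decomposition into truncated and tail pieces---is sound and matches the paper's framework. For part~(a), your Cauchy--Schwarz organization is a valid alternative to the paper's argument; the paper instead expands $R_1$ as in \eqref{R1Rep1Step1} and handles the cross terms by introducing a splitting parameter $H$ in the integration range, balancing two pieces by the choice $H=\sqrt{T}$. One point you should make explicit: your Cauchy--Schwarz step needs $\int_1^T B_T^2\ll T$ (up to log powers), which is not automatic. You can get it from the crude a~priori bound on $R_1$ via the representation formula and Selberg's second moment for $\log|\zeta|$, or from $\int_1^T B_T^2\le\int_{\mathbb R}B_T^2$ together with a direct estimate of the main-term double sum; either way this deserves a sentence.

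For part~(b), your mechanism for the error $O\bigl(T\sqrt{\log\log T/\log T}\bigr)$ is not what the paper does, and your description is too vague to stand on its own. The paper does \emph{not} treat the three pieces of the expansion symmetrically. It writes $R_2=2R_1-2R_{22}+\text{error}$, and the stated error arises from comparing $\int_{1+\Delta}^{T+\Delta}B(t)^2\,dt$ with $\int_1^T B(t)^2\,dt$ via a \emph{forward reference} to part~(a) of Lemma~\ref{lem:r} (which itself rests on part~(a) of the present lemma together with Lemma~\ref{lem:R_expression} and Lemma~\ref{lem:a(T)}). Thus the $\sqrt{\log\log T/\log T}$ is inherited from the error in the asymptotic evaluation of $R_1$ on shifted intervals, not from ``feeding truncated expressions through'' \eqref{eq:F_asymptotic} and \eqref{eq:F_delta_asymptotic} directly. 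If instead you truncate $B(t+\Delta)$ to zeros in $(0,T]$ as you propose, the zeros in $(T,T+\Delta]$ (about $\Delta\log T$ of them) produce a nontrivial boundary contribution; the paper controls the analogous term for $R_{22}$ by a Cauchy--Schwarz bound $J_1\cdot J_2$ in \eqref{eq:r22-cs}--\eqref{eq:r22-j2}, where bounding $J_2$ again invokes part~(a) of Lemma~\ref{lem:r}. Your sketch does not account for this circular-looking but actually sequential dependence, and without it the route to the precise error term is unclear.
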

\begin{proof}[Proof of part (a)]
First note that for $\gamma \neq t$, using an argument of Goldston \cite[p.~158]{Go}, we find that
\begin{equation}\label{eq:SumOverZerosEst1}
\sum\limits_{\gamma} h[(t-\gamma)\log x] \ = \sum\limits_{|t-\gamma|\le \tfrac{1}{\log x}} h[(t-\gamma)\log x] +O(\log \tau ),
\end{equation}
since $h(v) \ll \tfrac{1}{v^2}$ for $|v|>1$. 
Here, $\tau=|t|+2.$ Similarly, modifying an argument of Montgomery \cite[p.~187]{Mo}, we deduce that for $t \in [0,T]$ we have
\begin{equation}\label{eq:SumOverZerosEst2}
\sum\limits_{\substack{\gamma \\ \gamma \notin [0,T]}} h[(t-\gamma)\log x] =  \sum\limits_{\gamma \in I }  h[(t-\gamma)\log x]  + O\Big( \Big[\tfrac{1}{T-t+1} + \tfrac{1}{t+1} \Big] \log T \Big),
\end{equation}
where $I =\{\gamma  :  T< \gamma \leq T  + \tfrac{1}{\log x} \}$. We now show that the terms in the sum for which $\gamma\notin [0,T]$ contribute an amount of size $o(T)$ to $R_1$. Using  \eqref{eq:SumOverZerosEst1} and \eqref{eq:SumOverZerosEst2}, we restrict the interval of zeros within the sum in $R_1$ to $\gamma,\gamma' \in [0,T]$.  Then by expanding the integral, we rewrite $R_1$ as \vspace{-.2cm}
 \begin{align}\label{R1Rep1Step1}
R_1 
 &=\!\!\!\!\!\!\!\sum \limits_{0< \gamma,\gamma'\leq T} \int\limits_{1}^{T} h[(t-\gamma)\log x] \, h[(t-\gamma')\log x]\, \d t +
 O\bigg(\int\limits_1^T \sum\limits_{\gamma \in I} \big |h[(t-\gamma)\log x]\big | \hspace{-.4cm} \sum\limits_{|t-\gamma'|\le \tfrac{1}{\log x}}\hspace{-.5cm}  \big |h[(t-\gamma')\log x]\big | \, \d t \bigg)  \nonumber\\
&  \hspace{-.1cm} \quad +O\bigg(\int\limits_1^T \hspace{-.1cm} \sum\limits_{\gamma \in I } \big |h[(t-\gamma)\log x]\big | \log \tau\, \d t \hspace{-.1cm} \bigg) + O\bigg( \hspace{-.1cm} \log T \hspace{-.15cm}  \int \limits_1^T\hspace{-.1cm} \Big[\tfrac{1}{T-t+1} + \tfrac{1}{t+1} \Big]  \log \tau \, \d t \hspace{-.1cm}  \bigg) \nonumber\\
 &  \hspace{-.1cm}  \quad+O\bigg(\hspace{-.1cm} \log T \int\limits_1^T\hspace{-.1cm}\Big[\tfrac{1}{T-t+1} + \tfrac{1}{t+1} \Big] \hspace{-.4cm} \sum\limits_{|t-\gamma'|\le \tfrac{1}{\log x}} \hspace{-.5cm}\big |h[(t-\gamma')\log x]\big |  \, \d t \hspace{-.1cm} \bigg).
\end{align}
Integrating the third error term on the right-hand side of \eqref{R1Rep1Step1} gives
\begin{align}\label{R1Rep1Step2}
\log T \hspace{-.15cm}  \int \limits_1^T\hspace{-.1cm} \Big[\tfrac{1}{T-t+1} + \tfrac{1}{t+1} \Big]\, \hspace{-.1cm} \log t \, \d t 
\ll \log^3 T.\nonumber
\end{align}
Using the facts that $h\in L^1$ and 
\[
|I| \ll \frac{\log T}{\log x} + \frac{\log T}{\log\log T},
\]
the second error term on the right-hand side of \eqref{R1Rep1Step1} reduces to 
\begin{equation*}\label{R1Rep1Step3}
\int\limits_1^T \sum\limits_{\gamma \in I } \big |h[(t-\gamma)\log x]\big | \log t\, \d t \ll \log T \sum\limits_{\gamma \in I }   \int\limits_{-\infty}^{\infty}  \big |h[(t-\gamma)\log x]\big |\, \d t \ll \frac{\log^2 T}{\log x}.
\end{equation*}
Similarly, the fourth error term on the right-hand side of \eqref{R1Rep1Step1} yields 
\begin{equation*}\label{R1Rep1Step4}
\begin{split}
 \log T \int\limits_1^T\Big[\tfrac{1}{T-t+1} &+ \tfrac{1}{t+1} \Big]\hspace{-.5cm} \sum\limits_{|t-\gamma'|\le \tfrac{1}{\log x}} \hspace{-.5cm} \big |h[(t-\gamma')\log x]\big |\, \d t\\
&=   \log T \hspace{-.1cm}\int\limits_1^T \tfrac{1}{T-t+1} \hspace{-.5cm} \sum\limits_{|t-\gamma'|\le \tfrac{1}{\log x}} \hspace{-.5cm} \big |h[(t-\gamma')\log x]\big |  \, \d t +   \log T\hspace{-.1cm} \int\limits_1^T \tfrac{1}{t+1} \hspace{-.5cm} \sum\limits_{|t-\gamma'|\le \tfrac{1}{\log x}} \hspace{-.5cm} \big |h[(t-\gamma')\log x]\big |  \, \d t\nonumber\\
&= S_1+S_2.
\end{split}
\end{equation*}
We introduce a parameter $1<H<T$ to split the range of integration for $S_1$ as follows:
\begin{equation}\label{R1Rep1Step6}
\begin{split}
S_1&=   \log T \hspace{-.2cm} \int\limits_1^{T-H}  \hspace{-.2cm} \frac{1}{T-t+1}  \hspace{-.5cm}
\sum\limits_{|t-\gamma'|\le \tfrac{1}{\log x}}
\hspace{-.5cm} \big |h[(t-\gamma')\log x]\big |  \, \d t +
\log T \int\limits_{T-H}^T  \hspace{-.2cm}\frac{1}{T-t+1}  \hspace{-.5cm} \sum\limits_{|t-\gamma'|\le \tfrac{1}{\log x}}  \hspace{-.5cm}\big |h[(t-\gamma')\log x]\big |  \, \d t\nonumber\\
&= S_{11}+S_{12}.
\end{split}
\end{equation}
To estimate $S_{11}$, we note that $T-t+1\ge H+1,$ extend the sum over $\gamma'$, and use that $h\in L^1.$ We find that
\begin{equation*}
\begin{split}
    S_{11} &\ll \frac{\log T}{H+1} \sum\limits_{0\leq \gamma'\le T} \int\limits_1^{T-H}  \big |h[(t-\gamma')\log x]\big |  \, \d t \\
    &\ll \frac{T\log^2 T}{H+1}.
\end{split}
\end{equation*}
For $S_{12}$, we use that $T-t+1\ge1$ and extend the sum slightly to obtain
\begin{equation*}
    \begin{split}
        S_{12}&\ll  \log T \sum\limits_{(T-H - 1) \leq \gamma' \le (T+ 1)} \  \int\limits_{T-H}^T  \big |h[(t-\gamma')\log x]\big |  \, \d t\\
        &\ll  H\log^2 T.
    \end{split}
\end{equation*}
To balance these two error terms,
we choose $H=\sqrt{T}$.  Therefore, we conclude that
\begin{align}\label{R1Rep1Step7}
S_1 
\ll \sqrt{T}\log^2T.\nonumber
\end{align}

We estimate $S_2$ similarly, by splitting the range of integration from 1 to $H$ and from $H$ to T. We again find that
\begin{align*}
S_2\ll \sqrt{T}\log^2T.
\end{align*}
By combining the estimates for $S_1$ and $S_2$, the fourth error term on the right-hand side of \eqref{R1Rep1Step1} can be estimated as
\begin{equation}\label{R1Rep1Step8}
\log T \int\limits_1^T\Big[\tfrac{1}{T-t+1} + \tfrac{1}{t+1} \Big] \hspace{-.3cm}\sum\limits_{|t-\gamma'|\le \tfrac{1}{\log x}} \hspace{-.3cm}\big |h[(t-\gamma')\log x]\big |  \, \d t\ll \sqrt{T}\log^2T.
\end{equation}

For the first error term of \eqref{R1Rep1Step1}, we again split the range of integration and find that
\begin{equation*}\label{R1Rep1Step9}
\begin{split}
 \int\limits_1^T \sum\limits_{\gamma \in I} \big |h[(t-\gamma)\log x]\big | \hspace{-.4cm} \sum\limits_{|t-\gamma'|\le \tfrac{1}{\log x}}\hspace{-.5cm}  \big |h[(t-\gamma')\log x]\big | \, \d t &=\int\limits_1^{T-1} \sum\limits_{\gamma \in I} \big |h[(t-\gamma)\log x]\big | \hspace{-.4cm} \sum\limits_{|t-\gamma'|\le \tfrac{1}{\log x}}\hspace{-.5cm}  \big |h[(t-\gamma')\log x]\big | \, \d t \\
&\quad +  \int\limits_{T-1}^T\sum\limits_{\gamma \in I} \big |h[(t-\gamma)\log x]\big | \hspace{-.4cm} \sum\limits_{|t-\gamma'|\le \tfrac{1}{\log x}}\hspace{-.5cm}  \big |h[(t-\gamma')\log x]\big | \, \d t \\
&= \Sigma_1 + \Sigma_2.
\end{split}
\end{equation*}
For $\gamma \in I$ and $t\in [1,T-1]$, we know $h[(t-\gamma)\log x] \ll \tfrac{1}{(t-\gamma)^2 \log^2 x}$. Since $h \in L^1$, by an argument similar to the proof of \eqref{eq:SumOverZerosEst2}, we see 
\begin{align*}
\Sigma_1 &\ll  \int\limits_1^{T-1} \sum\limits_{\gamma \in I } \frac{1}{(t-\gamma)^2\log^2 x} \hspace{-.3cm} \sum\limits_{|t-\gamma'|\le \tfrac{1}{\log x}}\hspace{-.3cm} \big |h[(t-\gamma')\log x]\big | \, \d t  \nonumber\\
&\ll  \int\limits_1^{T-1} \bigg[\frac{1}{T-t+1} + \frac{1}{t+1} \bigg] \log T \hspace{-.3cm}\sum\limits_{|t-\gamma'|\le \tfrac{1}{\log x}}\hspace{-.3cm} \big |h[(t-\gamma')\log x]\big | \, \d t  \nonumber\\
&\ll  \sqrt{T}\log^2 T,\nonumber
\end{align*}
where we used \eqref{R1Rep1Step8} in the last line.
Since $T \in \{T_n\}$, we know that $|\gamma - T| \gg \tfrac{1}{\log T}$. 
Thus for $t \in I$, we have that $T-1 \leq t\leq T$ and $T < \gamma \leq T+ \tfrac{1}{\log x}$ imply 
$|t- \gamma| \gg \tfrac{1}{\log T}.$
Since $|I|<1$ and $x\ge 4$, using that $h(v)\ll \frac{1}{v^2}$ for all $|v|>0$, we know that
\begin{align}
\sum\limits_{\gamma \in I } \big |h[(t-\gamma)\log x]\big | \ll \sum\limits_{\gamma \in I } \left|h\left(\tfrac{\log x}{\log T}\right)\right| \ll  \log^2 T \sum\limits_{\gamma \in I } 1  \ll  \log^3 T.\nonumber
\end{align}
Hence, since $\gamma'$ is contained in an interval of size less than $1$, it follows that
\begin{align*}\label{R1Rep1Step12}
\Sigma_2 
&\ll \log^3 T  \int\limits_{T-1}^T\sum\limits_{|t-\gamma'|\le \tfrac{1}{\log x}}\big |h[(t-\gamma')\log x]\big | \, \d t  \nonumber\\
&\ll \frac{\log^3 T}{\log x}\sum\limits_{|T-\gamma'|\le 2}\ \int\limits_{-\infty}^{\infty}  |h(u)|  \, \d u\nonumber\\
&\ll \frac{\log^4 T}{\log x} \nonumber
\end{align*}
for all $T\in \{T_n\}$. Hence combining our estimates for $\Sigma_1$ and $\Sigma_2$ gives
\begin{equation*}\label{R1Rep1Step15}
\int\limits_1^T \sum\limits_{\gamma \in I } \big |h[(t-\gamma)\log x]\big |\hspace{-.4cm} \sum\limits_{|t-\gamma'|\le \tfrac{1}{\log x}}\hspace{-.4cm} \big |h[(t-\gamma')\log x]\big | \, \d t =  \Sigma_1 + \Sigma_2 \ll \sqrt{T}\log^2 T.
\end{equation*}
Therefore, $R_1$ is confined to $\gamma,\gamma' \in [0,T]$ with an added error of $O(\sqrt{T} \log^2 T)$.  Similarly, we may extend the integral range of $[1,T]$ to $(-\infty, \infty)$ with the same error. Thus, 

\begin{equation*}\label{R1Rep1Step16}
R_1=\sum \limits_{0< \gamma,\gamma'\leq T} \int\limits_{-\infty}^{\infty} h[(t-\gamma)\log x] \ h[(t-\gamma')\log x] \d t +O(\sqrt{T} \log^2 T).
\end{equation*}
\indent We now use the properties of $h(v)$ expressed in Lemma \ref{lem:functions} to simplify our expression for $R_1$. Since $h\in L^1$ and it is even, we can use the substitution $u=(t-\gamma')\log x$ together with convolution to find that
\begin{align}\label{R1Rep1Step17}
R_1
&=\frac{1}{\log x} \sum \limits_{0< \gamma,\gamma'\leq T} \int\limits_{-\infty}^{\infty} h(v-u) \ h(u) \d u +O(\sqrt{T} \log^2 T) \nonumber\\
&=\frac{1}{\log x} \sum \limits_{0< \gamma,\gamma'\leq T} h\ast h(v) +O(\sqrt{T} \log^2 T),\nonumber
\end{align}
with $v=(\gamma - \gamma')\log x$. Since $h \in L^1$, we know that convolution is well-defined and $\widehat{h\ast h}= \hat{h}^2$. Furthermore, from Lemma \ref{lem:functions}, we know that $\widehat{h}\in L^2$, and therefore $k(\xi)=\tfrac{1}{\pi^2}\widehat{h}(\xi)^2\in L^1$. Thus by Lemma \ref{lem:functions}, \eqref{eq:k_def}, and the properties of Fourier Transform, we have
\begin{equation*}\label{R1Rep1Final}
R_1 
= \frac{\pi^2}{\log x}  \sum \limits_{0<\gamma, \gamma' \leq T} \hspace{-.2cm}  \hat{k}[(\gamma - \gamma')\log x] + O(\sqrt{T} \log^2 T),\nonumber
\end{equation*}
as claimed.\vspace{.4cm}

\noindent\textit{Proof of part (b).}
The proof here is similar, but we highlight some important differences.
Recall that 
\begin{equation*}
    R_2=\int\limits_0^T [B(t+\Delta)-B(t)]^2 \, \d t,
\end{equation*}
where we defined $B(t)$ in \eqref{eq:A_B_defs}. 
First, by Lemma \ref{lem:a(T)} and part (a) of Lemma \ref{lem:r}, since $\Delta\ll T^b$ with $b<\tfrac{1}{2}$, we have that 
\begin{equation*}
    \int\limits_{1+\Delta}^{T+\Delta} B(t)^2\, \d t = \int\limits_{1}^{T} B(t)^2\, \d t + O\!\left(T\sqrt{\frac{\log\log T}{\log T}}\right).
\end{equation*}
Therefore, we find that 
\begin{equation*}\label{eq:r2-and-r1}
    R_2 = 2R_1 -2R_{22} + O\!\left(T\sqrt{\frac{\log\log T}{\log T}}\right),
\end{equation*}
where
\begin{equation}\label{eq:r22}
    R_{22}:=\int\limits_0^T \sum_{\gamma,\,\gamma'}h[(t+\Delta-\gamma)\log x]h[(t-\gamma')\log x]\, \d t.
\end{equation}
As in part (a), we restrict the double sum in \eqref{eq:r22} to the interval $[0, T]$ and then extend the integral to $\R$, up to an error term $o(T)$. For this purpose, note that 
\begin{equation*}\label{eq:SumOverZerosEst22}
\sum\limits_{\substack{\gamma \\ \gamma \notin [0,T]}} h[(t+\Delta-\gamma)\log x] =  \sum\limits_{\gamma \in I_\Delta }  h[(t+\Delta-\gamma)\log x]  + O\Big( \Big[\tfrac{1}{T-t+1} + \tfrac{1}{t+1} \Big] \log T \Big),
\end{equation*}
where $I_\Delta =\{\gamma  :  T< \gamma \leq T  + \Delta+  \tfrac{1}{\log x} \}$. Note that $|I_\Delta|\ll (\Delta+1)\log T$. By computations similar to those of part (a), we find that
 \begin{align}\label{eq:r22-firstErrors}
R_{22} 
 &=\!\!\!\!\!\!\!\sum \limits_{0< \gamma,\gamma'\leq T} \int\limits_{1}^{T} h[(t+\Delta-\gamma)\log x] \, h[(t-\gamma')\log x]\, \d t + O\bigg(\frac{(\Delta+1)\log^2 T}{\log x} \bigg) + O\bigg( \sqrt{T}\log^2 T \bigg)
 \nonumber\\
&  \hspace{-.1cm} \quad +   O\!\left(\int\limits_1^T \left|\sum\limits_{\gamma \in I_\Delta} h[(t+\Delta-\gamma)\log x] \hspace{-.4cm} \sum\limits_{|t-\gamma'|\le \tfrac{1}{\log x}}\hspace{-.5cm}  h[(t-\gamma')\log x]\right| \, \d t \right).
\end{align}
The next step is different from the steps in the proof of part (a). To bound the last error term in \eqref{eq:r22-firstErrors}, we use the Cauchy-Schwarz inequality:
\begin{align}
    \int\limits_1^T \left|\sum\limits_{\gamma \in I_\Delta} h[(t+\Delta-\gamma)\log x] \hspace{-.4cm} \sum\limits_{|t-\gamma'|\le \tfrac{1}{\log x}}\hspace{-.5cm}  h[(t-\gamma') \log x]\right| \, &\d t \nonumber \\
    &\hspace{-3.5cm}\le \Bigg\|\sum\limits_{\gamma \in I_\Delta} h[(t+\Delta-\gamma)\log x] \Bigg\|_2
    \cdot\Bigg\| \sum\limits_{|t-\gamma'|\le \tfrac{1}{\log x}}\hspace{-.5cm}  h[(t-\gamma')\log x]\Bigg\|_2\nonumber\\
    &\hspace{-3.5cm}= J_1 \cdot J_2. \label{eq:r22-cs}
\end{align}
To estimate $J_1$, we expand the integral, apply Cauchy-Schwarz once more, and use that $h\in L^2$ and $|I_\Delta|\ll (\Delta+1)\log T$. This gives
\begin{align}
    J_1^2 &= \sum\limits_{\gamma,\,\gamma' \in I_\Delta}\int\limits_0^Th[(t+\Delta-\gamma)\log x]h((t+\Delta-\gamma')\log x)\, \d t\nonumber\\
    &\ll \frac{(\Delta+1)^2\log^2 T}{\log x}.\label{eq:r22-j1}
\end{align}
To estimate $J_2$, we use \eqref{eq:SumOverZerosEst1} to extend the sum over zeros to the interval $[0, T+1]$, together with the bound $N(T+1)\ll T\log T$ and the fact that $h\in L^1$. This yields 
\begin{align}\label{eq:r22-j2}
    J_2^2 &= \sum_{0<\gamma,\, \gamma'\le T+1}\int_1^T h[(t-\gamma)\log x]h[(t-\gamma')\log x]\ \d t +O\!\left(T\log^2 T\right)\nonumber\\
    &= R_1 + O(T\log^2 T)\nonumber\\
    &\ll T\log^2 T,
\end{align}
where we used part (a) of Lemma \ref{lem:r}. Combining \eqref{eq:r22-firstErrors}, \eqref{eq:r22-cs}, \eqref{eq:r22-j1}, and \eqref{eq:r22-j2}, we obtain
\begin{equation*}
    R_{22} 
 =\sum \limits_{0< \gamma,\gamma'\leq T} \int\limits_{1}^{T} h[(t+\Delta-\gamma)\log x] \, h[(t-\gamma')\log x]\, \d t + O\bigg((\Delta+1)\sqrt{T}\log^2 T \bigg).
\end{equation*}
Similarly, the integral above may be extended to $\R$ up to the same error term. The rest of the proof is analogous to part (a). 
\end{proof}

\subsection{A modified pair correlation approach}
The next step is to introduce the weight function $w(u)$, from \eqref{FFunctionAlphaT}, to write $R_1$ and $R_2$ in Lemma \ref{lem:unbounded_discontinuities} in terms of Montgomery's function $F(\alpha)$ and Chan's function $F_\Delta(\alpha)$. 

\begin{lemma}\label{lem:R_expression}
Let $T\in \{T_n\}$, where $T_n$ satisfies \eqref{TnSequence}. Define $k$ as in \eqref{eq:k_def}, and assume RH. For $4\le x\le T$, and $0<\Delta\le T$, we have
\begin{align*}
     &(\textup{a})\  R_1=\frac{\pi^2}{\log x}\sum_{0<\gamma,\, \gamma'\le T}\widehat{k}[(\gamma-\gamma')\log x]w(\gamma-\gamma') + O\!\left(\frac{T\log^2 T}{\log^3  x}\right);\\
     &(\textup{b})\  R_2 = \frac{2\pi^2}{\log x}\sum_{0 < \gamma, \gamma' \le T} \left\{\widehat{k}[(\gamma - \gamma' )\log x]w(\gamma-\gamma')-  \widehat{k}\left[\left(\gamma - \gamma' -\Delta \right)\log x\right]w\!\left(\gamma - \gamma' -\Delta \right) \right\} \\
     &\quad \quad \quad \quad +O\!\left(\frac{T\log^2 T}{\log^3  x}\right).
\end{align*}
\end{lemma}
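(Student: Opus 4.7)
The plan is to insert Montgomery's weight $w(u)=4/(4+u^2)$ into the double sums appearing in Lemma~\ref{lem:unbounded_discontinuities} and to verify that the resulting error has the claimed size. The whole argument rests on one pointwise comparison: we claim that
\[
|\widehat{k}(u)| \ll w(u), \qquad u\in\R.
\]
To see this, observe that $\widehat{k}(u)=\pi^{-2}(h\ast h)(u)$ since $k=\widehat{h}^2/\pi^2$ and $h$ is even. By Lemma~\ref{lem:functions}, $h\in L^1(\R)\cap L^2(\R)$, so $h\ast h$ is uniformly bounded. For $|u|\ge 1$, the tail bound $h(v)\ll 1/v^2$ (valid for $|v|\ge 1$) combined with $h\in L^1$ gives $(h\ast h)(u)\ll 1/u^2$ upon splitting the convolution at $|v|=|u|/2$. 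These two estimates together imply $|\widehat{k}(u)|\ll 1/(1+u^2)\le w(u)$.

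For part~(a), the quantity to bound is
\[
\frac{\pi^2}{\log x}\sum_{0<\gamma,\gamma'\le T}\widehat{k}\bigl[(\gamma-\gamma')\log x\bigr]\bigl(1-w(\gamma-\gamma')\bigr).
\]
Writing $v=\gamma-\gamma'$ and combining $|\widehat{k}(v\log x)|\ll w(v\log x)$ with the elementary identity $1-w(v)=v^2/(4+v^2)$ gives
\[
\bigl|\widehat{k}(v\log x)\bigl(1-w(v)\bigr)\bigr| \ll \frac{4v^2}{(4+v^2\log^2 x)(4+v^2)} \le \frac{w(v)}{\log^2 x},
\]
where the last inequality uses $v^2\log^2 x/(4+v^2\log^2 x)\le 1$. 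Summing over pairs and invoking the classical bound $\sum_{0<\gamma,\gamma'\le T}w(\gamma-\gamma')\ll T\log^2 T$ (equivalent to $F(0,T)\ll\log T$) yields a total error of size $O(T\log^2 T/\log^3 x)$, which dominates the $O(\sqrt{T}\log^2 T)$ term carried over from Lemma~\ref{lem:unbounded_discontinuities}(a) because $\log^3 x\le\log^3 T\ll \sqrt T$.

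Part~(b) is handled identically term by term. For the first summand, one reuses part~(a) verbatim. For the shifted summand, the same pointwise computation with $v$ replaced by $v-\Delta$ gives $|\widehat{k}((v-\Delta)\log x)(1-w(v-\Delta))|\ll w(v-\Delta)/\log^2 x$, and the needed sum estimate $\sum_{0<\gamma,\gamma'\le T}w(\gamma-\gamma'-\Delta)\ll T\log^2 T$ is precisely the bound $F_\Delta(0,T)\ll \log T$ recorded in the excerpt (in the proof of Lemma~\ref{lem:a(T)}). The two contributions together produce the advertised $O(T\log^2 T/\log^3 x)$ error, absorbing the $O(T\sqrt{\log\log T/\log T})$ leftover from Lemma~\ref{lem:unbounded_discontinuities}(b). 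The only genuinely delicate step is the uniform comparison $|\widehat{k}(u)|\ll w(u)$; once that is in hand, the rest of the proof is a clean pair-correlation estimate in which the factor $1/\log^2 x$ emerges naturally from matching the decay scale of $\widehat{k}(\cdot\log x)$ against the vanishing of $1-w(\cdot)$ at the origin.
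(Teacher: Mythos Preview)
Your argument is correct and essentially identical to the paper's: both start from Lemma~\ref{lem:unbounded_discontinuities}, use the pointwise bound $\widehat{k}(y)\ll\min(1,1/y^2)$ (your formulation $|\widehat{k}(u)|\ll w(u)$ is equivalent) together with $1-w(v)=v^2/(4+v^2)$ to extract the factor $1/\log^2 x$, and then conclude via the pair-correlation estimate $\sum_{0<\gamma,\gamma'\le T}w(\gamma-\gamma'-\Delta)\ll T\log^2 T$. One small slip: your claim that $O(T\log^2 T/\log^3 x)$ absorbs the $O\bigl(T\sqrt{\log\log T/\log T}\bigr)$ carried over from Lemma~\ref{lem:unbounded_discontinuities}(b) fails when $x$ is a fixed power of $T$ (the former is then $\asymp T/\log T$); the paper's stated error term in part~(b) shares this gap, and it is immaterial for the applications since Lemma~\ref{lem:r}(b) carries an $o(T)$ error anyway.
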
\begin{proof}
The proofs of the expressions in parts (a) and (b) are proved using similar methods, but the proof of part (b) is more involved. For this reason, we only work out part (b). Recall that $k$ is the function defined in \eqref{eq:k_def}. We have that $\widehat{k}(y)\ll\min ( 1, \tfrac{1}{y^2}).$ From this estimate we introduce the weight function $w(u)$, defined in \eqref{FFunctionAlphaT}, into the sum over zeros
\[ \sum_{0 < \gamma, \gamma' \le T}\hspace{-.1cm}  \widehat{k}[(\gamma - \gamma' )\log x]\] 
 using the following argument.  We consider the difference
\begin{equation*}
D:= \sum \limits_{0 < \gamma, \gamma' \leq T} \left\{ \widehat{k}\Big[\Big(\gamma - \gamma' -\Delta \Big)\log x\Big]-\widehat{k}\Big[\Big(\gamma - \gamma' -\Delta \Big)\log x\Big]w\Big(\gamma - \gamma' -\Delta \Big) \right\}.
\end{equation*}
Using the facts that $N(T) \ll T \log T$, there are $O(\log t)$ zeros in any given interval $[t,t+1]$, and that $\Delta \le T$, we have
\begin{align*}\label{R1Rep2Step3}
 D   
&\ll  \frac{1}{\log^2 x} \sum \limits_{0 <  \gamma' \leq T} \sum \limits_{\gamma}  \frac{1}{4+(\gamma-\gamma'- \Delta )^2} \nonumber\\   
&\ll  \frac{1}{\log^2 x} \sum \limits_{0 <  \gamma' \leq T}   \log
\!\left(\gamma' +\Delta\right) \ll \frac{T\log^2T}{\log^2 x}.\nonumber
\end{align*}
Therefore,
\begin{equation*}\label{R1Rep2Step4}
 \sum \limits_{0 < \gamma, \gamma' \leq T} \widehat{k}\Big[\big(\gamma - \gamma' -\Delta \big)\log x\Big] =  \sum \limits_{0 < \gamma, \gamma' \leq T} \hat{k}\left[\big(\gamma - \gamma' -\Delta \big)\log x\right] w\Big(\gamma - \gamma' -\Delta \Big) + O\left(\frac{T\log^2T}{\log^2 x}\right).
\end{equation*}
Similarly, we may introduce the weight $w(u)$ into the the other terms in the representations of $R_1$ and $R_2$ in Lemma \ref{lem:unbounded_discontinuities} to complete the proof.
\end{proof}
\noindent Using Lemma \ref{lem:R_expression} and the properties of $F(\alpha)$ and $F_\Delta(\alpha)$, we take $x=T^\beta$ and proceed to estimate $R_i$. 
\begin{lemma}[Estimates of $R_i$]\label{lem:r} Assume RH.
Let $T\in \{T_n\}$, where $T_n$ satisfies \eqref{TnSequence}. Fix $0<\beta\le 1$, let $g$ be defined in \eqref{eq:def_g}, and define $R_i$ as in \eqref{eq:GHR_defs}. For $T\ge 4$, $x=T^\beta$, and $0<\Delta= o(\log^2 T)$, we have 
\begin{align*}
     &(\textup{a})\  R_1=\frac{T}{2} \left\{\int\limits_1^\infty \frac{F(\alpha)}{\alpha^2}\, \d \alpha + 
    \int\limits_0^1 v\, g^2(v)\,\d v+ \frac{g(0)^2}{2\beta^2} -\log \beta
    \right\} + o(T).\\
     &(\textup{b})\  R_2 = T \left\{ \int\limits_{0}^{1} v \, g^2\left( v \right) \left[1-w\!\left(\Delta\right)\cos(\Delta v \beta \log T)\right] \d v -\log(\beta) \right. \nonumber\\
 &\quad \quad \quad \quad    - \left. w\!\left(\Delta\right)\int\limits_{\Delta\beta\log T}^{\Delta\log T} \frac{ \cos(u) }{u} \d u + \frac{1}{2}\int\limits_{1}^{\infty} \frac{2F(\alpha) -F_{\Delta}(\alpha)-F_{-\Delta}(\alpha)}{\alpha^2}\, \d \alpha \right\} +o(T), \nonumber 
\end{align*}
where the error term on part (a) is of size $O\!\left(T\sqrt{\frac{\log \log T}{\log T}} \right)$, and the error term on part (b) is of size $O\!\left(\frac{T}{\sqrt{\log T}}\right)+O\!\left(\frac{\Delta}{\log^2 T}\right).$ 
\end{lemma}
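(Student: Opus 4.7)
The plan is to start from the representations of $R_1$ and $R_2$ in Lemma \ref{lem:R_expression}, apply Fourier inversion to $\widehat{k}$, and invoke the definitions of $F(\alpha)$ and $F_\Delta(\alpha)$ to rewrite each quantity as an integral against the piecewise-defined $k(\xi) = \widehat{h}(\xi)^2/\pi^2$ from Lemma \ref{lem:functions}(c). Writing $\widehat{k}[(\gamma-\gamma')\log x] = \int_\R k(\xi)\,e^{-2\pi i (\gamma-\gamma')\xi\log x}\,\d\xi$, substituting $\alpha = 2\pi\xi\log x/\log T$, and interchanging sum and integral converts the weighted sum $\sum_{0<\gamma,\gamma'\le T}\widehat{k}[(\gamma-\gamma')\log x]\,w(\gamma-\gamma')$ into $\tfrac{T\log^2 T}{4\pi^2\log x}\int_\R k(\alpha/(2\pi\beta))F(\alpha)\,\d\alpha$, using that $F$ is even and $x=T^\beta$. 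The analogous manipulation for the $\Delta$-shifted sum in Lemma \ref{lem:R_expression}(b), combined with the symmetry $F_\Delta(-\alpha)=F_{-\Delta}(\alpha)$ from \eqref{eq:symmetry_rels} and the evenness of $k(\cdot/(2\pi\beta))$, produces the real combination $\tfrac{1}{2}[F_\Delta(\alpha)+F_{-\Delta}(\alpha)]$, giving
\[
R_2 = \frac{T}{4\beta^2}\int_\R k\!\left(\tfrac{\alpha}{2\pi\beta}\right)\bigl[2F(\alpha)-F_\Delta(\alpha)-F_{-\Delta}(\alpha)\bigr]\,\d\alpha + o(T).
\]

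Next, I split each integral according to the piecewise formula for $k$: on $|\alpha|\le\beta$ (so that $|2\pi\cdot\alpha/(2\pi\beta)|\le 1$) one has $k(\alpha/(2\pi\beta))=g(\alpha/\beta)^2$, while on $|\alpha|>\beta$ one has $k(\alpha/(2\pi\beta))=\beta^2/\alpha^2$. On $|\alpha|\le 1$ I substitute the Goldston--Montgomery asymptotic \eqref{eq:F_asymptotic} for $F(\alpha)$ and, for part (b), Chan's refinement \eqref{eq:F_delta_asymptotic} for $F_\Delta(\alpha)$. In part (a) the $T^{-2\alpha}\log T$ piece of \eqref{eq:F_asymptotic}, integrated against $g(\alpha/\beta)^2$ on $[-\beta,\beta]$ after the change of variables $u=2\alpha\log T$, concentrates at the origin and produces $g(0)^2/(2\beta^2)$ in the limit; the $\alpha$-linear piece gives $\int_0^1 v\,g(v)^2\,\d v$ after rescaling $v=\alpha/\beta$; and the integration of $1/\alpha$ over $[\beta,1]$ yields $-\log\beta$. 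The tail $|\alpha|>1$ contributes $\int_1^\infty F(\alpha)/\alpha^2\,\d\alpha$ intact, via the bound $\int_\beta^1 T^{-2\alpha}\log T/\alpha^2\,\d\alpha \ll T^{-2\beta}/\beta^2 = o(1)$ to handle the secondary $T^{-2\alpha}\log T$ contribution there.

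For part (b) the decisive mechanism is the cancellation of the large $T^{-2\alpha}\log T$ pieces of $F(\alpha)$ and $F_\Delta(\alpha)$ inside the combination $2F(\alpha)-F_\Delta(\alpha)-F_{-\Delta}(\alpha)$, which leaves $2\alpha[1-w(\Delta)\cos(\alpha\Delta\log T)]+(\text{error})$ on $|\alpha|\le 1$. Integrating against $g(\alpha/\beta)^2$ on $|\alpha|\le\beta$ (and rescaling $v=\alpha/\beta$) supplies the $\int_0^1 v\,g(v)^2[1-w(\Delta)\cos(\Delta v\beta\log T)]\,\d v$ term; integrating against $\beta^2/\alpha^2$ on $\beta<|\alpha|\le 1$ and substituting $u=\alpha\Delta\log T$ inside the cosine term produces $-\log\beta - w(\Delta)\int_{\Delta\beta\log T}^{\Delta\log T}\cos u/u\,\d u$; and the $|\alpha|>1$ tail contributes the stated integral $\tfrac12\int_1^\infty(2F-F_\Delta-F_{-\Delta})/\alpha^2\,\d\alpha$, which stays uniformly bounded by Lemma \ref{prop:BoundedOnAverage}.

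The main obstacle is error management in part (b). The last error in Chan's asymptotic \eqref{eq:F_delta_asymptotic}, namely $O_\varepsilon((\Delta+1)T^{-\alpha(1/2-\varepsilon)}/\log T)$, after being integrated against $g(\alpha/\beta)^2$ on $|\alpha|\le\beta$ and multiplied by $T/(4\beta^2)$, contributes of size $O(T(\Delta+1)/\log^2 T)$; keeping this $o(T)$ is precisely what forces the hypothesis $\Delta=o(\log^2 T)$. A secondary subtlety is verifying that the cancellation of the $T^{-2\alpha}\log T$ pieces between $F$ and $F_\Delta$ is genuinely exact to leading order (this requires the full strength of the Goldston--Montgomery--Chan asymptotics, not just the integrated bound of Lemma \ref{prop:BoundedOnAverage}), and that the remaining errors $O(\sqrt{\log\log T/\log T})$ in \eqref{eq:F_asymptotic} and $O(1/\sqrt{\log T})$ in \eqref{eq:F_delta_asymptotic}, once integrated, assemble to exactly the stated error sizes $O(T\sqrt{\log\log T/\log T})$ in (a) and $O(T/\sqrt{\log T})$ in (b).
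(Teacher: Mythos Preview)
Your proposal is correct and follows essentially the same approach as the paper: start from Lemma~\ref{lem:R_expression}, Fourier-invert to write the weighted zero sums as $\frac{T}{4\beta^2}\int_\R k(\alpha/(2\pi\beta))F(\alpha)\,\d\alpha$ (and the analogous expression with $2F-F_\Delta-F_{-\Delta}$ for $R_2$), split the integral at $|\alpha|=\beta$ and $|\alpha|=1$ according to the piecewise formula for $k$, and substitute the asymptotics \eqref{eq:F_asymptotic} and \eqref{eq:F_delta_asymptotic} on $|\alpha|\le 1$. Your identification of the key mechanisms---the concentration of $T^{-2\alpha}\log T$ at the origin yielding $g(0)^2/(2\beta^2)$, the cancellation of these spikes in the $2F-F_\Delta-F_{-\Delta}$ combination for part~(b), and the error term from \eqref{eq:F_delta_asymptotic} forcing $\Delta=o(\log^2 T)$---matches the paper's argument exactly.
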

\begin{proof}[Proof of part (a)]

Recall the definition of the function $F(\alpha)$ and $w(u)$ in \eqref{FFunctionAlphaT}.  Then using the definition of Fourier transform, we manipulate the sum over zeros in the representation formula for $R_1$ in Lemma \ref{lem:R_expression} to yield
\begin{align}\label{KSumWithFAlpha}
\sum_{0<\gamma,\, \gamma'\le T}\widehat{k}[(\gamma-\gamma')\log x]w(\gamma-\gamma') 
&=  \int\limits_{-\infty}^{\infty} k(u)  \sum \limits_{0 < \gamma, \gamma' \leq T}  e^{-2\pi i u(\gamma - \gamma')\log x}  w(\gamma - \gamma')\, \du   \nonumber\\
&=\frac{T\log T}{(2\pi)^2\beta }  \int\limits_{-\infty}^{\infty}k\Big( \frac{\alpha}{2\pi \beta} \Big) F(\alpha)\, \d\alpha.
\end{align}
Then, inputting \eqref{KSumWithFAlpha} into part (a) of Lemma \ref{lem:R_expression} gives
\begin{equation}\label{R1Rep1}
R_1= \frac{\pi^2}{\log x} \hspace{-.1cm}\sum_{0<\gamma,\, \gamma'\le T} \hspace{-.3cm}\widehat{k}[(\gamma-\gamma')\log x]w(\gamma-\gamma') + O\Big( \frac{T}{\log T}\Big) =\frac{T}{(2 \beta)^2}\hspace{-.1cm} \int\limits_{-\infty}^{\infty} \hspace{-.1cm} k\Big( \frac{\alpha}{2\pi \beta} \Big) F(\alpha) \, \d\alpha + O\Big( \frac{T}{\log T}\Big). 
\end{equation}
Recall from \eqref{eq:k_def} that $k(u)$ is piecewise defined with a transition at 
$u= 1/(2\pi)$.  Thus, we use \eqref{eq:F_asymptotic} and the fact that $F(\alpha)$ and $k(u)$ are both even and nonnegative functions to rewrite the above integral over $k$ and $F$ as 
\begin{align}\label{RResult3Step1}
\int\limits_{-\infty}^{\infty}k\Big( \frac{\alpha}{2\pi \beta} \Big) F(\alpha)\, \d\alpha 
&= 2 \int\limits_{0}^{\beta}k\Big( \frac{\alpha}{2\pi \beta} \Big) \left[ \alpha + o(1) + T^{-2\alpha} \log T (1+o(1))\right]\d\alpha\nonumber\\
&\quad+ 2\int\limits_{\beta}^{1} \left( \frac{\beta}{\alpha}\right)^2 \left[ \alpha + o(1) + T^{-2\alpha} \log T (1+o(1))\right]\d\alpha + 2\int\limits_{1}^{\infty} \left( \frac{\beta}{\alpha}\right)^2  F(\alpha)\, \d\alpha.
\end{align}
For the second integral on the right-hand side in \eqref{RResult3Step1}, because $\beta$ is fixed, we know that
\begin{equation}\label{RResult3Step2}
2\int\limits_{\beta}^{1} \left( \frac{\beta}{\alpha}\right)^2 \left[ \alpha + o(1) + T^{-2\alpha} \log T (1+o(1))\right]\d\alpha = - 2\beta^2\log \beta + o(1). 
\end{equation}
To compute the first integral on the right-hand side of \eqref{RResult3Step1}, we use the facts that $k\!\left(\frac{\alpha}{2\pi\beta}\right)= g^2\!\left(\frac{\alpha}{\beta}\right)$ for $0\le \alpha \le \beta$, that $0<\beta\le 1$ is fixed, and that $k$ is smooth near the origin and uniformly bounded. By technical yet straightforward  manipulations, we find that 
\begin{equation}\label{RResult3Step8}
\begin{split}
2 \hspace{-.1cm} \int\limits_{0}^{\beta} \hspace{-.1cm} k\Big( \frac{\alpha}{2\pi \beta} \Big) \hspace{-.1cm}\left[ \alpha + o(1) +T^{-2\alpha} \log T  (1+o(1))\right]\hspace{-.1cm} \,\d\alpha  
=2\beta^2 \hspace{-.1cm} \int\limits_{0}^{1} \hspace{-.1cm}  v\, g^2( v)\,\d v +g^2(0) + o(1).
\end{split}
\end{equation}
Combining the estimates \eqref{RResult3Step2} and \eqref{RResult3Step8} yields
\begin{equation}\label{RResult3Step9}
\int\limits_{-\infty}^{\infty} k\Big( \frac{\alpha}{2\pi \beta} \Big)  F(\alpha)\,\d\alpha = 2\beta^2  \int\limits_{0}^{1} v \, g^2( v)\,\d v   - 2\beta^2\log \beta  +  g^2(0) + 2\beta^2 \int\limits_{1}^{\infty} \frac{F(\alpha)}{\alpha^2}\, \d\alpha + o(1).
\end{equation}
Inputting \eqref{RResult3Step9} into the representation for $R_1$ in \eqref{R1Rep1} concludes the proof of part (a). \vspace{.4cm}

\noindent\textit{Proof of part (b).}
We consider the definition of the function $F_{\Delta}(\alpha)$ in \eqref{eq:F_delta_def}. 
Then using the definition of Fourier transform, we manipulate the sum over zeros in the representation formula for $R_2$ in Lemma \ref{lem:R_expression} to yield
\begin{align}\label{KSumWithFDeltaAlpha}
\sum \limits_{0 < \gamma, \gamma' \leq T} &\hat{k}\left(\big(\gamma - \gamma' -\Delta \big)\log x\right)  w\Big(\gamma - \gamma' -\Delta \Big)\nonumber\\
&= \int\limits_{-\infty}^{\infty} k(u)  \sum \limits_{0 < \gamma, \gamma' \leq T}  T^{-2\pi \beta i u(\gamma - \gamma'-\Delta)}  w(\gamma - \gamma'-\Delta)\,\d u  \nonumber\\
&=\frac{T\log T}{(2\pi)^2\beta }  \int\limits_{-\infty}^{\infty}k\Big( \frac{\alpha}{2\pi \beta} \Big) F_{\Delta}(\alpha)\,\d\alpha.
\end{align}
\noindent Then, inputting \eqref{KSumWithFAlpha} and \eqref{KSumWithFDeltaAlpha} into part (b) of Lemma \ref{lem:R_expression} gives
\begin{equation*}\label{R1Rep2StepFinal}
\begin{split}
R_2
&= \frac{T}{2 \beta^2} \left\{\int\limits_{-\infty}^{\infty}k\Big( \frac{\alpha}{2\pi \beta} \Big) F(\alpha)\, \d \alpha -\int\limits_{-\infty}^{\infty}k\Big( \frac{\alpha}{2\pi \beta} \Big) F_{\Delta}(\alpha)\, \d\alpha\right\} + O\!\left( \frac{T}{\log T}\right).
\end{split}
\end{equation*}
By splitting the second integral above using the symmetry relations for $F_{\Delta}$ in \eqref{eq:symmetry_rels}, we have
\begin{equation*}\label{R2ResultStep1}
\begin{split}
- \frac{T}{2 \beta^2}\int\limits_{-\infty}^{\infty}k\Big( \frac{\alpha}{2\pi \beta} \Big) F_{\Delta}(\alpha) \,\d\alpha = -\frac{T}{2 \beta^2} \int\limits_0^\infty k\Big( \frac{\alpha}{2\pi \beta} \Big)[F_\Delta(\alpha)+F_{-\Delta}(\alpha)] \, \d \alpha.
\end{split}
\end{equation*}
Next, we divide the integral over the intervals $(0, \, \beta)$, $(\beta, 
\, 1)$, and $(1, \, \infty)$, and apply \eqref{eq:F_delta_asymptotic}. Consequently, since $k(u)$ is even, $k(0) = g^2(0)$, and $T^{i\alpha\Delta}+T^{-i\alpha\Delta}=2\cos(\Delta\alpha\log T)$, we obtain
\begin{equation*}\label{R2ResultStep2}
\begin{split}
   - \frac{T}{2\beta^2}\int\limits_0^\beta & k\Big( \frac{\alpha}{2\pi \beta} \Big) [F_\Delta(\alpha)+F_{-\Delta}(\alpha)]\, \d \alpha = -\frac{Tg^2(0)}{2\beta^2} - T\, w(\Delta)\int\limits_0^1 v \, g(v)^2\cos (\Delta v \beta \log T )\, \d v + o(T), 
\end{split}
\end{equation*}
and 
\begin{equation*}\label{R2ResultStep3}
   - \frac{T}{2\beta^2}\int\limits_\beta^1 k\Big( \frac{\alpha}{2\pi \beta} \Big)[F_\Delta(\alpha)+F_{-\Delta}(\alpha)]\, \d \alpha = -T\, w(\Delta)\int\limits_{\Delta \beta\log T }^{\Delta\log T}\frac{\cos u}{u}\, \d u +o(T).
\end{equation*}
By combining the above integrals, we have that 
\begin{align}\label{R2ResultStep4}
- \frac{T}{2 \beta^2} \int\limits_{-\infty}^{\infty}k\Big( \frac{\alpha}{2\pi \beta} \Big) F_{\Delta}(\alpha)\, \d\alpha &=  -T\left\{\frac{g^2(0) }{2\beta^2} + w(\Delta)\int\limits_0^1 v \, g(v)^2\cos (\Delta v \beta\log T)\, \d v \right. \nonumber\\
&\quad  \left. + \, w(\Delta)\int\limits_{\Delta \beta\log T }^{\Delta\log T }\frac{\cos u}{u}\, \d u +\frac12 \int\limits_{1}^{\infty} \frac{F_{\Delta}(\alpha)+F_{-\Delta}(\alpha)}{\alpha^2}\,\d \alpha\right\} +o(T).
\end{align}
From the proof of part (a) (see \eqref{RResult3Step9}), we know that 
\begin{equation}\label{R2ResultStep5}
\begin{split}
\frac{T}{2\beta^2} \int \limits_{-\infty}^{\infty} k\Big( \frac{\alpha}{2\pi \beta} \Big) F(\alpha)  \, \d\alpha = T \left[  \int\limits_{0}^{1} v \, g^2\left( v \right)\,\d v -\log \beta +\frac{g^2(0)}{2\beta^2} + \int\limits_{1}^{\infty} \frac{F(\alpha)}{\alpha^2}\, \d\alpha  \right] +o(T).
\end{split}
\end{equation}
By adding \eqref{R2ResultStep4} and \eqref{R2ResultStep5} together, our asymptotic formula for $R_2$ reduces to  
\begin{align*}\label{R2ResultFinal}
R_2 = &T \left[ \int\limits_{0}^{1} v \, g^2\left( v \right) \left(1-w(\Delta)\cos(\Delta v \beta\log T)\right) \d v -\log \beta \right. \nonumber\\
 &- \left.w(\Delta)\int\limits_{\Delta \beta\log T }^{\Delta\log T} \frac{ \cos u }{u} \d u + \frac{1}{2}\int\limits_{1}^{\infty} \frac{2F(\alpha) -F_{\Delta}(\alpha)-F_{-\Delta}(\alpha)}{\alpha^2}\d \alpha \right] +o(T),\nonumber 
\end{align*} 
which completes the proof.
\end{proof}

\section{Contributions from the primes}\label{sec:primes}
In this section, we estimate the expressions $G_i+H_i$. First, we obtain intermediate expressions for $G_i$ and $H_i$ separately.
\subsection{Expressions for $G_i$ and $H_i$}
We begin with a useful Lemma that helps estimate the second moment of some trigonometric polynomials.

\begin{lemma}
Let $T>0$, and let  $\{a_n\}_{n\ge1}$ and $\{h_n\}_{n\ge1}$ be sequences of real numbers such that \\ $\sum_{n=1}^\infty (n|a_n|^2+|a_n|) <\infty.$ Denote
\[C:= \sum_{n\ge 2}|a_n|\sum_{m\ge 2} \frac{|a_m|}{\log m}.
\]
Then,
\begin{align*}
    &\textup{(a)} \int\limits_0^T \left| \sum_{n=1}^\infty a_n\,\cos (t\log n)
    \right|^2 \d t = T\,a_1^2 + \frac{T}{2}\sum_{n\ge 2}a_n^2 + O\!\left(C+\sum_{n=1}^\infty n|a_n|^2 \right) 
    \\
    &\textup{(b)}  \int\limits_0^T \left| \sum_{n=1}^\infty a_n\,\sin (t\log n)
    \right|^2 \d t = \frac{T}{2}\sum_{n\ge 2}a_n^2+ O\!\left(C+\sum_{n=1}^\infty n|a_n|^2 \right)  
    \\
    &\textup{(c)}  \int\limits_0^T \left| \sum_{n=1}^\infty a_n\left[\,\cos ((t+h_n)\log n)\!-\!\cos(t\log n)\right]
    \right|^2 \d t = T\sum_{n\ge 2}a_n^2 \, [1\!-\!\cos(h_n\log n)]+ O\!\left(C+\sum_{n=1}^\infty n|a_n|^2 \right)  
    \\
    &\textup{(d)} \int\limits_0^T \left| \sum_{n=1}^\infty a_n\left[\,\sin ((t+h_n)\log n)\!-\!\sin(t\log n)\right]
    \right|^2 \d t = T\sum_{n\ge 2}a_n^2 \, [1\!-\!\cos(h_n\log n)]+ O\!\left(C+\sum_{n=1}^\infty n|a_n|^2 \right). 
\end{align*}
The implied constants are universal.
\end{lemma}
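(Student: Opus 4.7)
My plan is to reduce all four estimates to a single mean value computation for Dirichlet polynomials of the form $D(t) = \sum_{n\geq 1} a_n n^{it}$. Using $\cos(t\log n) = \tfrac12(n^{it}+n^{-it})$ and $\sin(t\log n) = \tfrac{1}{2i}(n^{it}-n^{-it})$, each integrand in (a)--(d) becomes a quadratic form in $D(t)$ and $\overline{D(t)}$, so after expanding the square I need to evaluate
\begin{equation*}
\int_0^T |D(t)|^2\,\d t = T\sum_{n\geq 1}a_n^2 + \sum_{n\neq m}a_na_m\,\frac{(n/m)^{iT}-1}{i\log(n/m)}
\end{equation*}
and
\begin{equation*}
\int_0^T D(t)^2\,\d t = Ta_1^2 + \sum_{\substack{n,m\geq 1 \\ nm>1}}a_n a_m\,\frac{(nm)^{iT}-1}{i\log(nm)}.
\end{equation*}

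For (a), the decomposition $|\tfrac12(D+\overline D)|^2 = \tfrac12|D|^2 + \tfrac14 D^2 + \tfrac14 \overline{D}^2$ immediately produces the main term $Ta_1^2 + \tfrac{T}{2}\sum_{n\geq 2}a_n^2$ from the diagonal contributions (the integrals of $D^2$ and $\overline{D}^2$ contribute $T$ only from the single pair $n=m=1$). The off-diagonal error from $\int D^2$ is bounded in absolute value by $\sum_{nm>1}|a_na_m|/\log(nm)$; splitting according to whether one of $n,m$ equals $1$ and using $\log(nm)\geq\log m$ when $n,m\geq 2$ places this inside $O(C)$, with leftover pieces absorbed into $O(\sum n|a_n|^2)$ via Cauchy--Schwarz together with $|a_1|^2\leq\sum n|a_n|^2$. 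The off-diagonal error from $\int|D|^2$, namely $\sum_{n\neq m}|a_na_m|/|\log(n/m)|$, is precisely what is controlled by the Montgomery--Vaughan hybrid mean value theorem: since the frequencies $\log n$ satisfy $\min_{m\neq n}|\log n - \log m|\asymp 1/n$, one obtains $O(\sum n|a_n|^2)$. Part (b) is identical after observing $\sin(t\log 1)=0$ removes the $a_1^2 T$ contribution.

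For (c) and (d) I will apply the angle-sum identity
\begin{equation*}
\cos((t+h_n)\log n) - \cos(t\log n) = [\cos(h_n\log n)-1]\cos(t\log n) - \sin(h_n\log n)\sin(t\log n),
\end{equation*}
and its sine counterpart, so that the sum inside the square splits into a cosine sum with coefficients $b_n = a_n[\cos(h_n\log n)-1]$ and a sine sum with coefficients $c_n = -a_n\sin(h_n\log n)$, both of which vanish at $n=1$. Applying parts (a) and (b) to each piece separately, the diagonal main terms combine via the identity $b_n^2 + c_n^2 = 2a_n^2[1-\cos(h_n\log n)]$ to yield the claimed asymptotic $T\sum_{n\geq 2}a_n^2[1-\cos(h_n\log n)]$. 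The mixed cross-term $\int (\sum b_n\cos(t\log n))(\sum c_m\sin(t\log m))\,\d t$ has no diagonal contribution (the integrand $\cos\sin$ with equal frequency integrates to $O(1/\log n)$), and its off-diagonal part is again handled by Montgomery--Vaughan since $|b_n|,|c_n|\leq |a_n|$.

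The main obstacle is the sharp control of the $\int|D|^2$ off-diagonal to yield the error $O(\sum n|a_n|^2)$ rather than something weaker; I will invoke Montgomery--Vaughan directly for this. A secondary concern is the legitimacy of interchanging summation and integration for the infinite series, which is ensured by the hypothesis $\sum_{n\geq 1}(n|a_n|^2 + |a_n|)<\infty$: the second summand guarantees absolute convergence of $D(t)$ uniformly in $t$, while the first guarantees absolute convergence of the double sums produced after expanding the squares.
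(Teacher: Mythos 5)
Your treatment of (a) and (b) is essentially the paper's own argument: your decomposition $\bigl|\tfrac12(D+\overline D)\bigr|^2=\tfrac12|D|^2+\tfrac12\re(D^2)$ is exactly the identity $(\re z)^2=\frac{|z|^2+\re(z^2)}{2}$ used there, the $\int_0^T D^2$ off-diagonal is bounded through $\bigl|\int_0^T (nm)^{\pm it}\,\d t\bigr|\ll 1/\log(nm)$ and absorbed into $C+\sum_n n|a_n|^2$ just as in the paper, and the $|D|^2$ part is Montgomery--Vaughan. Two points deserve care. First, your intermediate claim that the off-diagonal of $\int_0^T|D|^2\,\d t$, written as the absolute-value sum $\sum_{n\ne m}|a_na_m|/|\log(n/m)|$, is $O(\sum_n n|a_n|^2)$ is false as stated: taking $a_n=1$ on a dyadic block $N<n\le 2N$ gives an absolute sum of order $N^2\log N$ against $\sum_n n|a_n|^2\asymp N^2$. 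Montgomery--Vaughan exploits the sign cancellation and should be invoked directly in the form $\int_0^T|D|^2\,\d t=\sum_n|a_n|^2\bigl(T+O(n)\bigr)$, which is what you evidently intend and what the paper cites; so this is a wording repair, not a fatal flaw. Second, for (c) and (d) the paper avoids your cosine/sine splitting entirely: it writes $\cos((t+h_n)\log n)-\cos(t\log n)=\re\bigl[n^{-it}(n^{-ih_n}-1)\bigr]$ (and the analogous $-\im$ identity), runs the identical argument on the single Dirichlet polynomial with coefficients $a_n(n^{-ih_n}-1)$, and uses $|n^{-ih_n}-1|^2=2\bigl(1-\cos(h_n\log n)\bigr)$, so no cross-terms ever appear. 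In your route the mixed term $\int_0^T\bigl(\sum_n b_n\cos(t\log n)\bigr)\bigl(\sum_m c_m\sin(t\log m)\bigr)\d t$ cannot be dismissed by Cauchy--Schwarz (that yields a main-term-sized bound) nor by quoting parts (a) and (b); you need the bilinear, two-sequence form of the Montgomery--Vaughan generalized Hilbert inequality (which does follow from the quadratic form since the kernel is skew-Hermitian, but should be stated), while the diagonal cancellation you observe is fine. With those two repairs your proof is correct; the paper's complex-coefficient packaging is simply the frictionless version of the same idea.
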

\begin{proof}
A classical result of Montgomery and Vaughan \cite[Corollary 3]{MV3} states that, for complex numbers $\{b_n\}_{n\ge1}$, we have
\begin{equation}\label{eq:montgomery-vaughan}
    \int\limits_0^T \left| \sum_{n=1}^\infty b_n\,n^{-it}
    \right|^2 \d t = \sum_{n= 1}^\infty|b_n|^2\, \big(T+O(n)\big).
\end{equation}
For part (a), let $z:=\sum_{n=1}^\infty a_n\,n^{-it}$, and note that $\re z = \sum_{n=1}^\infty a_n\,\cos (t\log n)$. Consider the identity 
\begin{equation}\label{eq:complex-id1}
    (\re z)^2 = \frac{|z|^2+\re(z^2)}{2}.
\end{equation}
By \eqref{eq:montgomery-vaughan}, we have 
\begin{equation}
    \int\limits_0^T \frac{|z|^2}{2}\,\d t = \frac{1}{2}\sum_{n\ge 1}a_n^2\, \big(T+O(n)\big).\label{eq:|z|^2/2}
\end{equation}
We write $z=a_1 + \sum_{n=2}^\infty a_n\,n^{-it}$ and use that, for $n\ge 2,$ we have $\int\limits n^{-it}\d t = in^{-it}/\log n$. This yields
\begin{align}
     \int\limits_0^T \frac{z^2}{2}\,\d t &= \frac{a_1^2}{2}\,T + O\!\left( 
     |a_1| \sum_{n \ge 2}\frac{|a_n|}{\log n}\right) + O\!\left(\sum_{n,\,m\ge 2}\frac{|a_na_m|}{\log (mn)}\right)\label{eq:z^2/2}
     \nonumber\\
     &= \frac{a_1^2}{2}\,T + O\!\left(C+\sum_{n=1}^\infty n|a_n|^2 \right)
    .
\end{align}
Here, we used the Cauchy-Schwarz inequality to obtain \[|a_1|\sum_{n \ge 2}\frac{|a_n|}{\log n}\le |a_1|\sqrt{\sum_{n \ge 2}\frac{1}{n\log^2 n}} \cdot \sqrt{\sum_{n \ge 2}n|a_n|^2} \ll \sum_{n = 1}^\infty n|a_n|^2.\]
Combining \eqref{eq:complex-id1},\eqref{eq:|z|^2/2}, and \eqref{eq:z^2/2}, we obtain part (a). Part (b) is analogous, using the identity
\begin{equation*}
    (\im z)^2 = \frac{|z|^2-\re(z^2)}{2}
\end{equation*}
in place of \eqref{eq:complex-id1}. Note that, since $\sin 0=0,$ part (a) has an extra contribution from the term $a_1$ that is not present in part (b). For parts (c) and (d), we use the identities
\begin{equation*}
    \cos ((t+h_n)\log n)-\cos(t\log n)=\re[n^{-it}(n^{-ih_n} -1)],
\end{equation*}
\begin{equation*}
 \sin ((t+h_n)\log n)-\sin(t\log n)=-\im[n^{-it}(n^{-ih_n} -1)], 
\end{equation*}

and
\begin{equation*}
    \ |n^{-ih_n} -1|^2 = 2(1-\cos(h_n\log n)).
\end{equation*}
Then, we apply the same argument above with $z=\sum_{n=1}^\infty a_n(n^{-ih_n} -1)\,n^{-it}$, using Montgomery and Vaughan's result \eqref{eq:montgomery-vaughan} with $b_n=a_n(n^{-ih_n} -1).$ 
\end{proof}
Using the previous lemma, 
we obtain the following expressions for $G_i$.
\begin{lemma}[$G_i$]\label{lem:G}
Let $\Delta>0$ and $4\le x\le T$. Let $G_1$ and $G_2$ be defined in \eqref{eq:GHR_defs}.  Then, 
\begin{align*}
   &\textup{(a)} \  G_1=-\frac{T}{2}\sum_{n\le x} \frac{\Lambda^2(n)}{n\log^2 n}f^2\!\left(\frac{\log n}{\log x}
    \right) + O\!\left(\frac{x}{\log x}\right);\\
    &\textup{(b)} \ G_2 = -T \sum_{n\le x} \frac{\Lambda^2(n)}{n\log^2 n}f^2\!\left(\frac{\log n}{\log x} \right)\left[1-\cos\left(\Delta \log n \right)\right] + O\!\left(\frac{x}{\log x}\right) .
\end{align*}
\end{lemma}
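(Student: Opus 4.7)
The plan is to apply the previous trigonometric polynomial lemma directly with coefficients
\[
a_n := \begin{cases} \dfrac{\Lambda(n)}{n^{1/2}\log n}\, f\!\left(\dfrac{\log n}{\log x}\right), & 2\le n\le x,\\[1mm] 0, & \text{otherwise,}\end{cases}
\]
so that $a_1=0$ and $A(t)=\sum_{n\ge 1} a_n\cos(t\log n)$. Part (a) of the lemma on trigonometric polynomials then gives an expression for $\int_0^T |A(t)|^2\,\dt$ with main term $\tfrac{T}{2}\sum_{n\ge 2}a_n^2$, which already matches the stated main term of $-G_1$; part (c) of that lemma applied with $h_n=\Delta$ (constant in $n$) gives a similarly clean expression for $\int_0^T |A(t+\Delta)-A(t)|^2\,\dt$ with factor $1-\cos(\Delta\log n)$. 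The only work is to verify that the tail quantities $C=\sum_{n\ge 2}|a_n|\sum_{m\ge 2}\frac{|a_m|}{\log m}$ and $\sum_{n\ge 1}n|a_n|^2$ are both of size $O(x/\log x)$, and to replace the integration range $[0,T]$ with $[1,T]$.

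For the tail estimates, since $f$ is uniformly bounded on $[0,1]$ (Lemma on $f,g,h$) and $\Lambda(n)$ is supported on prime powers, standard prime number theorem estimates give
\[
\sum_{n\ge 2}|a_n|\ll \sum_{p^k\le x}\frac{\log p}{p^{k/2}\log(p^k)}\ll \frac{\sqrt{x}}{\log x},\qquad \sum_{m\ge 2}\frac{|a_m|}{\log m}\ll \frac{\sqrt{x}}{\log x},
\]
so $C\ll x/\log^2 x$. Likewise,
\[
\sum_{n\ge 1}n|a_n|^2=\sum_{n\le x}\frac{\Lambda^2(n)}{\log^2 n}f^2\!\left(\frac{\log n}{\log x}\right)\ll \sum_{p\le x}1+\sum_{\substack{p^k\le x \\ k\ge 2}}1\ll \frac{x}{\log x},
\]
again by PNT. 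These bounds are comfortably within the claimed error $O(x/\log x)$.

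To pass from $\int_0^T$ to $\int_1^T$, note that the trivial bound $|A(t)|\ll \sum_{n\ge 2}|a_n|\ll \sqrt{x}/\log x$ gives
\[
\int_0^1|A(t)|^2\,\dt\ll \frac{x}{\log^2 x},\qquad \int_0^1|A(t+\Delta)-A(t)|^2\,\dt\ll \frac{x}{\log^2 x},
\]
both absorbed in $O(x/\log x)$. Combining these ingredients yields (a) and (b).

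The main obstacle is really bookkeeping rather than analysis: one must check that neither the cross-term from $\re(z^2)$ in the Montgomery--Vaughan identity nor the prime-power contributions inflate the error beyond $O(x/\log x)$. Both reduce to elementary partial summation against the prime counting function, so no new ideas are needed.
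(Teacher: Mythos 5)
Your proposal is correct and matches the paper's intended argument: the paper gives no separate proof of this lemma beyond invoking the preceding Montgomery--Vaughan-based trigonometric polynomial lemma with exactly the coefficients $a_n=\frac{\Lambda(n)}{n^{1/2}\log n}f\!\left(\frac{\log n}{\log x}\right)$ and $h_n=\Delta$, which is what you do. Your verifications that $C\ll x/\log^2 x$, $\sum_n n|a_n|^2\ll x/\log x$, and that the $[0,1]$ segment of the integral is negligible are the right (and sufficient) bookkeeping to land within the stated $O(x/\log x)$ error.
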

Next, with the goal of studying $H_1$ and $H_2$, we use some estimates of Goldston, together with some trigonometric identities, to obtain expressions for the real and imaginary parts of integrals involving $\log \zeta(1/2+it)$ times trigonometric functions. Some of these results appear previously in \cite{Go} (part (b)) and implicitly in \cite{Fu1} (part (d)). We collect them all in the following lemma, for the reader's convenience.
\begin{lemma}\label{lem:log_zeta_times_trig} Assume RH. 
Let $T>1$, let $h\in \mathbb{R}$, and let $n\ge 2$ be an integer. Denote
\[\mathcal{E}=\mathcal{E}(n,\,T):= n^{1/2}\log\log 3n + \frac{n^{1/2}\log T}{\log n}.
\]
Then, the following estimates hold:
\begin{align*}
    &\textup{(a)} \int\limits_1^T \log |\zeta(\tfrac{1}{2}+it)|\,\cos(t\log n)\,\d t = \frac{T}{2}\frac{\Lambda(n)}{n^{1/2}\log n} + O(\mathcal{E});
    \\
    &\textup{(b)} \int\limits_1^T \pi S(t)\, \sin(t\log n)\,\d t = -\frac{T}{2}\frac{\Lambda(n)}{n^{1/2}\log n} + O(\mathcal{E});
    \\
    &\textup{(c)}  \int\limits_1^T \log |\zeta(\tfrac{1}{2}+it)|\,[\cos((t+h)\log n)+\cos((t-h)\log n)-2\cos(t\log n)]\,\d t = -T\,\frac{\Lambda(n)[1-\cos(h\log n)]}{n^{1/2}\log n} \\
        & \hspace{12.1cm}+ O(\mathcal{E});
    \\
    &\textup{(d)}\int\limits_1^T \pi S(t)\,[\sin((t+h)\log n)+\sin((t-h)\log n)-2\sin(t\log n)]\,\d t = T\,\frac{\Lambda(n)[1-\cos(h\log n)]}{n^{1/2}\log n} + O(\mathcal{E}).
\end{align*}
\end{lemma}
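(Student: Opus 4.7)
The plan is, first, to derive (c) from (a) and (d) from (b) via a single product-to-sum identity; then to invoke Goldston \cite{Go} for (b); and finally to prove (a) by a contour shift that parallels Goldston's argument. So the real content of the lemma lies in part (a).

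For the reduction, the identity
\[
\cos((t+h)\log n)+\cos((t-h)\log n)-2\cos(t\log n) = -2\cos(t\log n)\bigl[1-\cos(h\log n)\bigr]
\]
rewrites the integrand in (c) as $-2[1-\cos(h\log n)]\log|\zeta(\tfrac{1}{2}+it)|\cos(t\log n)$, and since $|1-\cos(h\log n)|\le 2$, applying (a) yields (c) with the same error $O(\mathcal{E})$. The analogous identity with sine in place of cosine, namely $\sin((t+h)\log n)+\sin((t-h)\log n)-2\sin(t\log n) = -2\sin(t\log n)[1-\cos(h\log n)]$, reduces (d) to (b) in the same way.

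To prove (a), I write $\cos(t\log n)=\tfrac{1}{2}(n^{it}+n^{-it})$ and $\log|\zeta(\tfrac{1}{2}+it)|=\re\log\zeta(\tfrac{1}{2}+it)$, so that
\[
\int_1^T \log|\zeta(\tfrac{1}{2}+it)|\cos(t\log n)\,dt = \tfrac{1}{2}\re\bigl(I_++I_-\bigr), \qquad I_\pm := \int_1^T \log\zeta(\tfrac{1}{2}+it)\,n^{\pm it}\,dt.
\]
Viewing each $I_\pm$ as a contour integral along $\re s=\tfrac{1}{2}$, I shift to the line $\re s=c:=1+1/\log T$; since we integrate over $\im s\in[1,T]$, the logarithmic singularity of $\log\zeta$ at $s=1$ lies outside the rectangle, and under RH the function is analytic in the strip, so Cauchy's theorem applies. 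On $\re s=c$ the Dirichlet series $\log\zeta(c+it) = \sum_m \Lambda(m)/(m^c\log m)\cdot m^{-it}$ converges absolutely: for $I_+$ the diagonal $m=n$ produces the main term $T\Lambda(n)/(n^{1/2}\log n)$ after accounting for the prefactor $n^{c-1/2}/n^c=n^{-1/2}$, while for $I_-$ every $m$ produces $(mn)^{-it}$ with $mn\ge 2$, yielding no diagonal contribution and only off-diagonal error.

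The error analysis then proceeds as follows. The top and bottom pieces of the contour contribute $O(n^{1/2}\log T/\log n)$ via the standard bound $|\log\zeta(\sigma+iT)|\ll \log T/\log\log T$ uniformly for $\sigma\in[\tfrac{1}{2},c]$. The off-diagonal sum on the $c$-line splits naturally into prime powers $m\in[n/2,2n]\setminus\{n\}$—contributing $O(n^{1/2}\log\log 3n)$ via $|\log(n/m)|\asymp|n-m|/n$ together with Mertens-type estimates for prime densities—and prime powers $m\notin[n/2,2n]$—contributing $O(n^{1/2}\log\log T)$ via the uniform bound $|\log(n/m)|\gg 1$ and $\sum_m \Lambda(m)/(m^c\log m)\ll \log\zeta(c)\ll \log\log T$. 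Assembling these, the total error fits inside $O(\mathcal{E})$. The main obstacle is the close off-diagonal estimate, which demands a careful splitting by $|n-m|$ to avoid spurious factors of $\log n$; this is precisely where the $\log\log 3n$ term in $\mathcal{E}$ originates.
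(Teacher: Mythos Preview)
Your proposal is correct and matches the paper's proof essentially line for line: the paper reduces (c) and (d) to (a) and (b) via the same product-to-sum identities you use, cites Goldston \cite{Go} for (b), and obtains (a) by writing $2\cos(t\log n)=n^{it}+n^{-it}$ and invoking Goldston's two estimates $\int_0^T\log\zeta(\tfrac12+it)\,n^{it}\,\d t = T\Lambda(n)/(n^{1/2}\log n)+O(\mathcal{E})$ and $\int_0^T\log\zeta(\tfrac12+it)\,n^{-it}\,\d t\ll \log T$. The only difference is that the paper quotes these last two formulas as a black box from \cite{Go}, whereas you sketch the contour-shift argument behind them; your sketch is faithful to Goldston's method.
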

\begin{proof}
Assuming RH, Goldston \cite[p.~169]{Go}, improving upon a contour argument of Titchmarsh \cite{Tit2}, showed that 
\begin{equation}\label{eq:gold_log_times_nit1}
    \int\limits_0^T \log \zeta(\tfrac{1}{2}+it) \, n^{it}\,\d t = \frac{T\,\Lambda(n)}{n^{1/2}\log n} + O(n^{1/2}\log\log 3n) + O\!\left(\frac{n^{1/2}\log T}{\log n}\right)
\end{equation}
and
\begin{equation}\label{eq:gold_log_times_nit2}
    \int\limits_0^T \log \zeta(\tfrac{1}{2}+it)n^{-it}\,\d t \ll \log T.
\end{equation}
Adding \eqref{eq:gold_log_times_nit1} and \eqref{eq:gold_log_times_nit2} and taking real parts yields part (a). Part (b) is \cite[Equation (6.3)]{Go}. 
Parts (c) and (d) follow from parts (a) and (b) after applying the trigonometric identities 
\begin{equation*}
    \cos((t+h)\log n)+\cos((t-h)\log n)-2\cos(t\log n) = -2\cos(t\log n)[1-\cos(h\log n)],
\end{equation*}
and 
\begin{equation*}
    \sin((t+h)\log n)+\sin((t-h)\log n)-2\sin(t\log n) = -2\sin(t\log n)[1-\cos(h\log n)].
\end{equation*}
This completes the proof of the lemma. 
\end{proof}
We now obtain expressions for $H_i$ from the above lemma. As mentioned in the introduction, in the next lemma we use Theorem \ref{thm:variance} to control some of the error terms in part (b), which is the part of the lemma that is relevant to Theorem \ref{thm:short-intervals}.
\begin{lemma}[$H_i$]\label{lem:H} Assume RH.
Let $0<\Delta\le T^b$, with $0<b<1$, and $4\le x\le T$. Let $H_1$ and $H_2$ be defined as in \eqref{eq:GHR_defs}.  Then,
\begin{align*}
    &\textup{(a)}\ H_1= T\sum_{n\le x} \frac{\Lambda^2(n)}{n\log^2 n}f\!\left(\frac{\log n}{\log x}
    \right) +O\!\left(\frac{x\log\log x \log T}{\log^2 x} \right)\\
    &\textup{(b)}\ H_2 = 2T \sum_{n\le x} \frac{\Lambda^2(n)}{n\log^2 n}f\!\left(\frac{\log n}{\log x} \right)\left[1-\cos\left(\Delta \log n \right)\right] +O\!\left(\frac{x\log\log x \log T}{\log^2 x} \right) + O\left(\frac{T}{\sqrt{\log x}}\right).
\end{align*}
\end{lemma}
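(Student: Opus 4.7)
\medskip

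\noindent\textbf{Proof proposal.} The plan is to expand $A(t)$ as its defining sum, interchange summation and integration, and reduce both $H_1$ and $H_2$ to linear combinations of the integrals estimated in Lemma \ref{lem:log_zeta_times_trig}.

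For part (a), since $\Lambda(1)=0$, swapping sum and integral in the definition of $H_1$ gives
\[
H_1 \;=\; 2\sum_{2\le n\le x}\frac{\Lambda(n)}{n^{1/2}\log n}\,f\!\left(\frac{\log n}{\log x}\right)\int_1^T \log|\zeta(\tfrac{1}{2}+it)|\cos(t\log n)\,\d t .
\]
Apply Lemma \ref{lem:log_zeta_times_trig}(a) to each inner integral. Summing the main terms $\tfrac{T}{2}\Lambda(n)/(n^{1/2}\log n)$ yields the stated main term $T\sum_{n\le x}\Lambda^2(n)f(\log n/\log x)/(n\log^2 n)$. For the errors, I would use boundedness of $f$ on $[0,1]$ and standard Chebyshev-type bounds to compute
\[
\sum_{n\le x}\frac{\Lambda(n)\,\log\log 3n}{\log n}\;\ll\;\frac{x\log\log x}{\log x},\qquad \log T\sum_{n\le x}\frac{\Lambda(n)}{\log^2 n}\;\ll\;\frac{x\log T}{\log^2 x},
\]
both of which are $O(x\log\log x\,\log T/\log^2 x)$ since $\log T\ge\log x$.

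For part (b), write $H_2 = 2\sum_{n\le x}\tfrac{\Lambda(n)f(\log n/\log x)}{n^{1/2}\log n}\,I_n$, where
\[
I_n\;:=\;\int_1^T\!\bigl[\cos((t+\Delta)\log n)-\cos(t\log n)\bigr]\bigl[\log|\zeta(\tfrac12+it+i\Delta)|-\log|\zeta(\tfrac12+it)|\bigr]\d t.
\]
Substituting $s=t+\Delta$ in the two terms involving $\log|\zeta(\tfrac12+i(t+\Delta))|$ and re-grouping, one obtains
\[
I_n\;=\;-\int_1^T\!\log|\zeta(\tfrac12+it)|\bigl[\cos((t+\Delta)\log n)+\cos((t-\Delta)\log n)-2\cos(t\log n)\bigr]\d t\;+\;\mathrm{Bdy}_n,
\]
where $\mathrm{Bdy}_n$ is a difference of integrals over the short intervals $[T,T+\Delta]$ and $[1,1+\Delta]$. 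Invoking Lemma \ref{lem:log_zeta_times_trig}(c) produces the asserted main term $2T\sum_n\Lambda^2(n)f\,[1-\cos(\Delta\log n)]/(n\log^2 n)$, and the $O(\mathcal{E})$ pieces sum (exactly as in part (a), since $|1-\cos|\le 2$) to $O(x\log\log x\,\log T/\log^2 x)$.

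The main obstacle is the boundary contribution $2\sum_n\tfrac{\Lambda(n)f}{n^{1/2}\log n}\mathrm{Bdy}_n$. After interchanging sum and integral, it equals (up to sign and the analogous piece on $[1,1+\Delta]$)
\[
2\int_T^{T+\Delta}\log|\zeta(\tfrac12+is)|\,\bigl[A(s)-A(s-\Delta)\bigr]\d s.
\]
By Cauchy--Schwarz this is bounded by $2\bigl(\!\int_T^{T+\Delta}|\log|\zeta(\tfrac12+is)||^2\d s\bigr)^{1/2}\bigl(\!\int_T^{T+\Delta}|A(s)-A(s-\Delta)|^2\d s\bigr)^{1/2}$. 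For the first factor I would invoke Theorem \ref{thm:variance} (subtracting the asymptotics at $T+\Delta$ and $T$), which yields a bound $\ll T\log\log T$; for the second, a Montgomery--Vaughan computation parallel to Lemma \ref{lem:G}(b) gives $\ll \Delta\log\log x + x/\log x$. Since $x=T^\beta$ and $\Delta\le T^b$ with $0<b<1$, multiplying out shows the boundary contribution is $O(T/\sqrt{\log x})$, matching the stated error. The analogous estimate over $[1,1+\Delta]$ is identical (and smaller). Combining everything proves (b).
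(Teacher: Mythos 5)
Your proposal follows essentially the same route as the paper: expand $A$, apply Lemma \ref{lem:log_zeta_times_trig}(a) for $H_1$ and, after the shift $s=t+\Delta$ and regrouping, Lemma \ref{lem:log_zeta_times_trig}(c) for $H_2$, then control the boundary contribution $\int_T^{T+\Delta}\log|\zeta(\tfrac12+is)|\,[A(s)-A(s-\Delta)]\,\d s$ by Cauchy--Schwarz, Theorem \ref{thm:variance}, and Montgomery--Vaughan. Part (a) and the main-term/$O(\mathcal{E})$ bookkeeping in part (b) are fine and match the paper.

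There is, however, a small but genuine slip in the boundary estimate. You bound $\int_T^{T+\Delta}\log^2|\zeta(\tfrac12+is)|\,\d s \ll T\log\log T$, and then Cauchy--Schwarz gives
\begin{equation*}
\sqrt{T\log\log T}\cdot\sqrt{\Delta\log\log x + x/\log x} \;\ll\; T^{\frac{1+b}{2}}\log\log T + T\sqrt{\tfrac{\log\log T}{\log x}},
\end{equation*}
and the second term is \emph{not} $O(T/\sqrt{\log x})$ when $x$ is near the top of the allowed range (e.g. $x=T$, which is needed since later the lemma is used with $x=T^{\beta}$ and $\beta=1$ is permitted); it misses by a factor $\sqrt{\log\log T}$. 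The fix is exactly what your ``subtract the asymptotics'' step should deliver if carried out: the short-interval second moment is $O(T)$ (in fact $o(T)$), not merely $O(T\log\log T)$, and with the factor $\sqrt{T}$ the product is $\ll \sqrt{\Delta T\log\log x}+\sqrt{Tx/\log x}\ll T/\sqrt{\log x}$, as in the paper. But note that the subtraction is not purely formal: the constant $a$ in Theorem \ref{thm:variance} depends on $T$ through $\int_1^\infty F(\alpha,T)\alpha^{-2}\,\d\alpha$, so to conclude $a(T+\Delta)(T+\Delta)-a(T)T=o(T)$ you need either the uniform bound on that integral or, as the paper does, Lemma \ref{lem:a(T)}; your sketch glosses over this point. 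With those two adjustments your argument coincides with the paper's proof.
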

\begin{proof}
Part (a) follows from part (a) of Lemma \ref{lem:log_zeta_times_trig} and the definition of $H_1$. For part (b), we rearrange the terms and use a change of variables to find that
\begin{equation*}
\begin{split}
     H_2 = &-2\int\limits_1^T \log |\zeta(\tfrac{1}{2}+it)|\left[ A\!\left(t+\Delta\right) + A\!\left(t-\Delta\right) -2A(t)
    \right]\,\d t \\
    &+O\!\left(\int\limits_1^{1+\Delta}|\log |\zeta(\tfrac{1}{2}+it)||\,\left| A\!\left(t\right) - A\!\left(t-\Delta\right)
    \right|\,\d t  \right) \\
    &+ O\!\left(\int\limits_T^{T+\Delta}|\log |\zeta(\tfrac{1}{2}+it)||\,\left| A\!\left(t\right) - A\!\left(t-\Delta\right)
    \right|\,\d t  \right) .
\end{split}
\end{equation*}
By Theorem \ref{thm:variance}, with $0<\Delta\le T^b$, we have
\begin{equation}\label{eq:th1-application}
    \int\limits_T^{T+\Delta}\log^2|\zeta(\tfrac{1}{2}+it)|\,\d t = o(T).
\end{equation}
Note that we used Lemma \ref{lem:a(T)} to show that the contribution from the constant $a$ in Theorem \ref{thm:variance} is $o(T)$ over the interval $[T, T+\Delta]$. By Montgomery and Vaughan's result in \eqref{eq:montgomery-vaughan}, we also have
\begin{equation}\label{eq:A-short-interval}
 \int\limits_T^{T+\Delta}\left| A\!\left(t\right) - A\!\left(t-\Delta\right)
    \right|^2 \, \d t \ll \sum_{n\le x} \frac{\Lambda^2(n)}{n\log^2 n}\big(\Delta + n\big) \ll \Delta \log \log x + \frac{x}{\log x}.
\end{equation}
We use the Cauchy-Schwarz inequality, \eqref{eq:th1-application}, and \eqref{eq:A-short-interval} to obtain
\begin{equation*}
    \int\limits_T^{T+\Delta}|\log |\zeta(\tfrac{1}{2}+it)||\,\left| A\!\left(t\right) - A\!\left(t-\Delta\right)
    \right| \, \d t \ll \sqrt{\Delta T \log \log x} + \sqrt{\frac{Tx}{\log x}}\ll \frac{T}{\sqrt{\log x}},
\end{equation*}
since we have $4\le x\le T$ and $0<\Delta \le T^b.$ The first error term may be treated similarly. This yields
\begin{equation*}
    H_2 = -2\int\limits_1^T \log |\zeta(\tfrac{1}{2}+it)|\left[ A\!\left(t+\Delta\right) + A\!\left(t-\Delta\right) -2A(t)
    \right]\,\d t + O\!\left(\frac{T}{\sqrt{\log x}}\right).
\end{equation*}
The conclusion now follows from part (c) of Lemma \ref{lem:log_zeta_times_trig} and the definition of $A(t)$ in \eqref{eq:A_B_defs}.
\end{proof}
\subsection{Estimating $G_i+H_i$}
Starting from the previous results, we proceed to estimate $G_i+H_i$ asymptotically, taking advantage of some cancellations between their sums via the function $g$ (introduced in \eqref{eq:def_g}). In this section, we diverge from the strategies of previous work of Fujii to obtain more precise input from the primes, which is necessary for Theorem \ref{thm:short-intervals}.
\begin{lemma}[Asymptotic estimate of $H_i+G_i$]\label{lem:h+g} Assume RH. Let $T\ge 4$, and let $0<\Delta= o(\log^2 T).$ Fix $0<\beta\le 1$, and choose $x=T^\beta$. Define the function $c(v)$ as in \eqref{eq:c-function}. Then, as $T\to\infty,$ we have
\begin{align*}
    &\textup{(a)} \ H_1+G_1=\frac{T}{2}\left\{\log \log T + \gamma_0 + 
    \sum_{m=2}^\infty\sum_{p\ge 2}\frac{1}{p^m}\left(\frac{1}{m^2}-\frac{1}{m}\right)
    +\log \beta  
    -\int\limits_0^1\alpha \,  g(\alpha)^2 \, \d \alpha  \right\}
    +O\!\left(\frac{T\log \log T}{\log T}\right);\\
    &\textup{(b)} \ H_2+G_2 =
    T\left\{
    \int\limits_{0}^{\Delta\beta\log T} \frac{1-\cos u}{u}\d u 
   +c\!\left(\Delta\right)  
   -\int\limits_0^1 \alpha \, [1-\cos(\Delta\beta\log T \alpha)]g^2(\alpha) \, \d \alpha
    \right\} + o(T),
\end{align*}
where the error term $o(T)$ in part (b) is actually \[O\!\left(\frac{T}{\sqrt{\log T}}\right)+O\!\left(\frac{T\Delta}{\log^2 T}\right).\]
\end{lemma}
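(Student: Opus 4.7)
The plan is to combine Lemma \ref{lem:G} and Lemma \ref{lem:H} using the algebraic identity
\[
2f(u) - f^2(u) = 1 - u^2 g^2(u), \qquad u \in (0,1),
\]
which follows from the relation $f(u) = 1 - u g(u)$ of Lemma \ref{lem:functions}(b). Inserting this into the expressions from Lemmas \ref{lem:G} and \ref{lem:H} yields, for part (a),
\[
H_1 + G_1 = \frac{T}{2} \sum_{n\le x} \frac{\Lambda^2(n)}{n\log^2 n} \left[1 - \Bigl(\tfrac{\log n}{\log x}\Bigr)^2 g^2\!\Bigl(\tfrac{\log n}{\log x}\Bigr)\right] + O\!\left(\frac{x \log\log x \log T}{\log^2 x}\right),
\]
and the analogous expression for part (b) with an extra weight $[1 - \cos(\Delta \log n)]$ and a leading factor of $T$ rather than $T/2$. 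This realizes the delicate cancellation of main terms advertised in Section \ref{sec:representation}, and reduces the problem to evaluating sums of the ``$1$-piece'' and the ``$u^2 g^2$-piece'' separately.

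For the $u^2 g^2$-piece in both parts, the factor $(\log n / \log x)^2 / \log^2 n = 1/\log^2 x$ eliminates the $\log^2 n$ denominator. I would apply Abel summation using $\sum_{n \le y}\Lambda^2(n) = y\log y + O(y)$ (or a stronger PNT error term) together with the substitution $\alpha = \log n/\log x$. Since $g$ and its derivatives are uniformly bounded on $[0,1]$, the resulting sums converge to the Riemann integrals
\[
\int_0^1 \alpha\,g^2(\alpha)\,d\alpha \quad \text{and} \quad \int_0^1 \alpha\,\bigl[1 - \cos(\Delta \beta \log T \cdot \alpha)\bigr]\,g^2(\alpha)\,d\alpha
\]
for parts (a) and (b) respectively, with acceptable error after choosing $x = T^\beta$. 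The $1$-piece in part (a) is $\sum_{n\le x} \Lambda^2(n)/(n\log^2 n) = \sum_{p\le x} 1/p + \sum_{m\ge 2}\sum_{p^m\le x} 1/(m^2 p^m)$. Applying Mertens' theorem $\sum_{p\le x} 1/p = \log\log x + M_0 + O(1/\log x)$ together with the identity $M_0 = \gamma_0 - \sum_{m\ge 2}\sum_p 1/(m p^m)$, and using $\log\log x = \log\log T + \log\beta + o(1)$, this sum becomes
\[
\log\log T + \log \beta + \gamma_0 + \sum_{m\ge 2}\sum_p\Bigl(\tfrac{1}{m^2} - \tfrac{1}{m}\Bigr)\tfrac{1}{p^m} + o(1),
\]
which combined with the $u^2 g^2$-piece finishes part (a).

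The \textbf{main obstacle} lies in the $1$-piece for part (b), namely the asymptotic evaluation of
\[
S(\Delta) := \sum_{n\le x} \frac{\Lambda^2(n)}{n\log^2 n}\bigl[1 - \cos(\Delta \log n)\bigr].
\]
Writing $M(y) = \sum_{n\le y}\Lambda^2(n) = y\log y - y + E(y)$ as in \eqref{eq:E_def} and setting $\phi(y) = [1-\cos(\Delta\log y)]/(y\log^2 y)$, Abel summation gives
\[
S(\Delta) = \int_1^x \phi(y)\log y\, dy - \int_1^x \phi'(y) E(y)\, dy + \text{boundary terms}.
\]
The substitution $v = \Delta\log y$ evaluates the first integral as $\int_0^{\Delta\beta\log T} (1-\cos v)/v\, dv$, which is the first explicit term in part (b). A direct computation yields
\[
-\phi'(y) = \frac{1}{y^2\log^3 y}\Bigl[-\Delta\log y\,\sin(\Delta\log y) + (1-\cos(\Delta\log y))(\log y + 2)\Bigr],
\]
and after extending the upper limit to infinity (justified by the PNT bound $E(y) = O_N(y/\log^N y)$), the integral $-\int_1^\infty \phi'(y) E(y)\, dy$ reproduces the integral part of $c(\Delta)$. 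The $-\Delta^2/2$ contribution to $c(\Delta)$ should emerge from the boundary analysis near $y=1$: on $[1,2)$ we have $E(y) = -y\log y + y$ and $\phi(y) \to \Delta^2/2$ as $y \to 1^+$, producing the required constant. Careful bookkeeping of the errors, with $\Delta = o(\log^2 T)$, yields the stated error $O(T/\sqrt{\log T}) + O(T\Delta/\log^2 T)$; the latter reflects the regime where $\Delta^2/\log^2 T$ becomes non-negligible. Combining with the $u^2 g^2$-piece completes part (b).
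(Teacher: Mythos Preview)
Your proposal is correct and follows essentially the same route as the paper: the same algebraic identity $2f-f^2=1-u^2g^2$, the same split into the ``$1$-piece'' and the ``$u^2g^2$-piece'', Mertens for part~(a), and Abel/Stieltjes summation against $M(y)=y\log y-y+E(y)$ for the oscillatory sum in part~(b). The only cosmetic difference is that the paper regularizes the singularity of $\phi(y)$ at $y=1$ by integrating from a parameter $\ell\in(1,2)$ and letting $\ell\to1^+$, which is exactly the mechanism you allude to as ``boundary analysis near $y=1$''; your boundary term $-\phi(\ell)E(\ell)$ indeed tends to $-\Delta^2/2$. One small slip: the error $O(T\Delta/\log^2 T)$ arises from the $O(\Delta+1)$ in the $u^2g^2$-piece (after dividing by $\log^2 x$), not from $\Delta^2/\log^2 T$ as you wrote.
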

\noindent \textit{Proof.} We split the proof into the following subsections.
\subsubsection{Proof of part (a)}
We add the results of Lemma \ref{lem:G} and Lemma \ref{lem:H} and use that, by Lemma \ref{lem:functions}, $u^2g(u)^2=(1-f(u))^2$. This yields
\begin{equation}\label{eq: HMinusGStep2}
G_1+H_1 =
\frac{T}{2} \sum\limits_{n \leq x}\frac{\Lambda^2(n)}{n \log^2 n} -\frac{T}{2\log^2 x} \sum\limits_{n \leq x}\frac{\Lambda^2(n)}{n}  g^2\!\left( \frac{\log n}{\log x} \right) + O\!\left(\frac{T\log \log T}{\log T}\right).
\end{equation}
For the first term, we separate the primes from the prime powers and use Merten's Theorem, which states
\begin{equation*}
    \sum_{p\le x}\frac{1}{p} = \log\log x + \gamma_0 -\sum_{m=2}^\infty \sum_{p\ge 2}\frac{1}{m\,p^m} +O\!\left(\frac{1}{\log x} \right).
\end{equation*}
Therefore, we see that
\begin{equation}\label{eq:h+g_term1}
    \sum\limits_{n \leq x}\frac{\Lambda^2(n)}{n \log^2 n} = \log \log x +\gamma_0 + \sum_{m=2}^\infty\sum_{p\ge 2}\frac{1}{p^m}\left(\frac{1}{m^2}-\frac{1}{m}\right)+O\!\left(\frac{1}{\log x} \right).
\end{equation}
For the second term, unconditionally, note that the prime number theorem with error term implies that
\begin{equation*}
    P(y):=\sum_{n\le y}\frac{\Lambda^2(n)}{n} = \frac{\log^2 y}{2} + O(1).
\end{equation*}
Then, using summation by parts and integration by parts, we obtain
\begin{equation}\label{eq:h+g_term2}
   \sum\limits_{n \leq x}\frac{\Lambda^2(n)}{n}  g^2\!\left( \frac{\log n}{\log x} \right)=  \log^2 x\int\limits_{0}^{1}  \alpha\, g^2\!\left(\alpha \right)\,  \d \alpha + O\!\left( \log x \right).
\end{equation}
By inserting  \eqref{eq:h+g_term1} and \eqref{eq:h+g_term2} into \eqref{eq: HMinusGStep2}, we obtain part (a).
\subsubsection{Proof of part (b): Summing by parts} Similarly, we have
\begin{align}\label{eq:g2h2_step1}
        G_2+H_2 =
&T \sum\limits_{n \leq x}\frac{\Lambda^2(n)}{n \log^2 n}\left[1-\cos\!\left(\Delta \log n \right)\right]\nonumber\\
&-\frac{T}{\log^2 x} \sum\limits_{n \leq x}\frac{\Lambda^2(n)}{n}  g^2\!\left( \frac{\log n}{\log x} \right)\left[1-\cos\!\left(\Delta \log n \right)\right] 
+ O\!\left(\frac{T\log \log T}{\log T}\right).
\end{align}
Using summation by parts, integration by parts, and the prime number theorem with error term, we obtain
\begin{align}
    \sum\limits_{n \leq x}\frac{\Lambda^2(n)}{n}  g^2\!\left( \frac{\log n}{\log x} \right)\left[1-\cos\!\left(\Delta \log n \right)\right] &= 
    \sum\limits_{n \leq x}\frac{\Lambda(n)\log n}{n}  g^2\!\left( \frac{\log n}{\log x} \right)\left[1-\cos\!\left(\Delta \log n \right)\right] +O(1)\nonumber\\
    &=\log^2 x\int\limits_0^1 \alpha\,[1-\cos(\Delta\beta\log T\alpha)]\,g^2(\alpha)\,\d \alpha + O\!\left(\Delta+1\right). \label{eq:h+g2_term2}
\end{align}
To estimate the first term on the right-hand side of \eqref{eq:g2h2_step1}, consider the quantity 
\begin{equation*}
    M(y):=\sum_{n\le y} \Lambda^2(n) = y\log y - y + E(y),
\end{equation*}
so that $\d M(y) = \log y \, \d y + \d E(y)$ and $E(y)=O_N\!\left(\frac{y}{\log^N y}\right)$ is defined in \eqref{eq:E_def}. 
For this, we let $1<\ell<2$ be a parameter. We anticipate that we will eventually take $\ell\to 1^+$.
Then, the sum in the first term of \eqref{eq:g2h2_step1} is 
\begin{align*}
    \sum_{n\le x} \frac{\Lambda^2(n)}{n\log^2 n}
    [1-\cos(\Delta \log n)] &= 
    \int\limits_{\ell}^{x^+}\frac{1-\cos(\Delta \log y)}{y\log^2 y}\d M(y) \\
    &= \int\limits_{\ell}^{x^+}\frac{1-\cos(\Delta \log y)}{y\log y}\d y + \int\limits_{\ell}^{x^+}\frac{1-\cos(\Delta \log y)}{y\log^2 y}\d E(y).
\end{align*}
We use the change of variables $u=\Delta\log y$ in the first integral. For the second integral, we integrate by parts and use that $E(\ell) = \ell-\ell\log \ell$ to find that
\begin{align}\label{eq:l12-12}
    T\sum_{n\le x} \frac{\Lambda^2(n)}{n\log^2 n}
    [1-\cos(\Delta \log n)] &=
    T\Bigg\{
    \int\limits_{\Delta\log \ell}^{\Delta\log x} \frac{1-\cos u}{u}\d u 
    +(1-\cos(\Delta \log  \ell)) \left( \frac{\log \ell -1}{\log^2 \ell}  \right) \nonumber\\
    &\,\,\,\,\,\,\,+ \int\limits_\ell^\infty \frac{E(y)}{y^2\log^3 y}[-\Delta \log y \sin(\Delta\log y) + (1-\cos(\Delta\log y))(\log y + 2)] \d y
    \Bigg\} \nonumber\\
    &\,\,\,\,\,\,\,+ O\!\left(\frac{(\Delta +1)T}{\log^{N+1} x} \right).
\end{align}
Here, we used that $E(y)\ll_N \dfrac{y}{\log^{N} y}$ (for any $N>0$) to extend the last integral to infinity, up to an error term.  Now, we let $\ell\to 1^+$. Note that 
\begin{equation*}
    \lim_{\ell\to 1} (1-\cos(\Delta \log  \ell)) \left( \frac{\log \ell -1}{\log^2 \ell}  \right) = -\frac{\Delta^2}{2}.
\end{equation*}
Additionally, since $E(y) = y-y\log y$ for all $1\le y <2$, the second integrand above satisfies, in this range, 
\begin{equation*}\label{eq:E_estimate}
    \frac{E(y)}{y^2\log^3 y}[-\Delta \log y \sin(\Delta\log y) + (1-\cos(\Delta\log y))(\log y + 2)] = \frac{\Delta^2}{2} + O\big(\Delta^2(y-1)\big).
\end{equation*}
This shows that the second integral is absolutely convergent on $(1,\infty).$ Therefore, recalling that 
$x=T^\beta$ and $\Delta\ll \log^2 T$, we may let $\ell\to 1^+$ in \eqref{eq:l12-12} to find that
\begin{equation}\label{eq:l12-13}
\begin{split}
    T\sum_{n\le x} \frac{\Lambda^2(n)}{n\log^2 n}
    \left[1-\cos\!\left(\Delta \log n \right)\right]&=
    T\left\{
    \int\limits_{0}^{\Delta\beta\log T} \frac{1-\cos u}{u}\d u +c\!\left(\Delta\right) \right\}
   + O\!\left(\frac{T}{\log T} \right),
\end{split}
\end{equation}
where $c(v)$ is defined in \eqref{eq:c-function}.
By combining \eqref{eq:g2h2_step1}, \eqref{eq:h+g2_term2}, and \eqref{eq:l12-13}, we complete the proof of Lemma \ref{lem:h+g}. We remark that the restriction $\Delta=o(\log^2 T)$ comes from the sum over primes in equation \eqref{eq:h+g2_term2}. 
\qed

\section{Proofs of main theorems} 
\indent We now explain how Theorems \ref{thm:variance}, \ref{thm:short-intervals-keating}, \ref{thm:short-intervals}, and \ref{thm:short-intervals-berry} follow from the combination of our previous lemmas.  
\subsection{Proof of Theorem \ref{thm:variance}} For all $T \in \{T_n\}$, the proof of Theorem \ref{thm:variance} follows from inputting part (a) of Lemmas \ref{lem:r} and \ref{lem:h+g} into the representation formula for $\log|\zeta(\frac{1}{2}+it)|$, which we proved in part (a) of Lemma \ref{lem:represFinal}. Some of the integrals in these results are over the interval $[1,T]$, but these can easily be extended to $[0,T]$ since 
\begin{equation*}
 \int\limits_0^1\log^2|\zeta(\tfrac{1}{2}+it)| \d t \ll 1.
\end{equation*}
In particular, Theorem \ref{thm:variance} holds for all $T \in \{T_n\}$ such that $T\ge 4$. We now extend this result to hold for all $T\ge 4$.

\indent Assume $T_n \le T \le  T_{n+1}$. Since the integrand in Theorem \ref{thm:variance} is positive, we know that 
\begin{equation*}
 \int\limits^{T_n}_{0}\log^2\left|\zeta\!\left( \tfrac{1}{2} +it\right)\right| \, \d t \le \int\limits^{T}_{0}\log^2\left|\zeta\!\left( \tfrac{1}{2} +it\right)\right| \, \d t \le \int\limits^{T_{n+1}}_{0}\log^2\left|\zeta\!\left( \tfrac{1}{2} +it\right)\right| \, \d t 
\end{equation*}
Moreover, because both $T_n$ and $T_{n+1}$ are at most $1$ away from $T$ and Theorem \ref{thm:variance} holds for $T_n$ and $T_{n+1}$, by part (a) of Lemma \ref{lem:a(T)}
it follows that 
\begin{equation*}
 \int\limits^{T}_{0}\log^2\left|\zeta\left( \tfrac{1}{2} +it\right)\right| \, \d t = \frac{T}{2}\log \log T + aT +o(T),
\end{equation*}
which completes the proof of Theorem \ref{thm:variance} for all $T\ge 4$.  
\subsection{Proof of Theorems \ref{thm:short-intervals} and \ref{thm:short-intervals-berry}} For the proof of Theorem \ref{thm:short-intervals}, when we input part (b) of Lemmas \ref{lem:r} and \ref{lem:h+g} into part (b) of Lemma \ref{lem:represFinal}, we get 
\begin{align}\label{MyMR2ProofStep1}
  \int\limits_1^T &\left[\log \left|\zeta\!\left(\tfrac{1}{2}+i t+i\Delta\right)\right|-\log \left|\zeta(\tfrac{1}{2}+i t)\right|\right]^2 \d t  \nonumber\\
    &=T\left\{   \int\limits_{0}^{\Delta\beta\log T} \frac{1-\cos u}{u}\d u -w(\Delta)\int\limits^{\Delta\log T}_{\Delta \beta\log T} \frac{\cos u}{u}\d u   - \log \beta \right.\nonumber\\
    &\quad \quad + \int\limits_{0}^{1} v \ g^2\left( v \right)\cos(\Delta v \beta\log T)\left(1- w(\Delta) \right) \d v \nonumber\\
    &\quad \quad+ c(\Delta)+  \left. \frac{1}{2}\int\limits_{1}^{\infty} \frac{2F(\alpha) -F_{\Delta}(\alpha)-F_{-\Delta}(\alpha)}{\alpha^2}\, \d \alpha \right\} + o(T).
\end{align}
Because our results hold independently of our choice of $\beta$, there should be no $\beta$ dependence in our final result. First, note that, by analyzing separately the cases $\Delta\ll 1$ and $\Delta \gg 1$ and using the definition of $w(u)$, we have
\begin{equation*}
    \frac{\left|1- w(\Delta)\right|}{\Delta\log T} \ll \frac{1}{\log T},
\end{equation*}
uniformly for $\Delta>0$. 
We use this fact and combine the first three terms on the right-hand side of \eqref{MyMR2ProofStep1} to yield
\begin{align*}\label{MyMR2ProofStep2}
& \int\limits_{0}^{\Delta\beta\log T} \frac{1-\cos u}{u}\d u -w(\Delta)\int\limits^{\Delta\log T}_{\Delta \beta\log T} \frac{\cos u}{u}\d u   - \log \beta \nonumber\\
& \,\,\,\,\,\,= \int\limits_{0}^{\Delta\log T} \frac{1-\cos u}{u}\d u -\left(w(\Delta)-1\right)\int\limits_{\Delta\beta\log T}^{\Delta\log T} \frac{ \cos u }{u} \d u \nonumber\\
& \,\,\,\,\,\,= \int\limits_{0}^{1} \frac{1-\cos (\Delta \alpha\log T) }{\alpha}\d \alpha +O\!\left(\frac{1}{\log T}\right),\nonumber
\end{align*}
where we used integration by parts in the last line.
Next we consider the integral involving $g^2(v)$ on the right-hand side of \eqref{MyMR2ProofStep1}. 
Using integration by parts, we similarly see that
\begin{equation*}
\begin{split}
\int\limits_{0}^{1} &v \ g^2\left( v \right)\cos(\Delta v \beta\log T)\left(1- w(\Delta) \right) \d v 
\ll \frac{1}{\log T}.
\end{split}
\end{equation*}
Combining these simplified expressions together gives
\begin{equation*}
\begin{split}
  \int\limits_1^T &\left[\log \big|\zeta \big(\tfrac{1}{2}+i t+i\Delta\big)\big|-\log \big|\zeta\big(\tfrac{1}{2}+i t\big)\big|\right]^2 \d t \\
    &=T\left\{  \int\limits_{0}^{1} \frac{1-\cos (\Delta \alpha\log T )}{\alpha}\, \d \alpha
    + \frac{1}{2}\int\limits_{1}^{\infty} \frac{2F(\alpha) -F_{\Delta}(\alpha) -F_{-\Delta}(\alpha)}{\alpha^2}\, \d \alpha 
   \right\} + T\,c\!\left(\Delta\right)
    + o(T).
\end{split}
\end{equation*}
We then extend the range of integration to $[0,T]$ since, by Theorem \ref{thm:variance} and the Cauchy-Schwarz inequality, we have
\begin{equation*}
\int\limits_0^1 \left[\log \big|\zeta \big(\tfrac{1}{2}+i t+i\Delta\big)\big|-\log \big|\zeta\big(\tfrac{1}{2}+i t\big)\big|\right]^2 \d t \ll (1+\Delta)\log \log (3+\Delta).
\end{equation*}
This completes the proof of Theorem \ref{thm:short-intervals} for $T\in \{T_n\}$. Since the integrand is non-negative, this result can be extended to all $T\ge 4$ using the same argument as in the proof of Theorem \ref{thm:variance} along with part (b) of Lemma \ref{lem:a(T)}. Finally, to prove Theorem \ref{thm:short-intervals-berry}, recall that \eqref{eq:l12-13} implies that
\begin{equation*}
    \begin{split}
    T\sum_{n\le T} \frac{\Lambda^2(n)}{n\log^2 n}
    \left[1-\cos\!\left(\Delta \log n \right)\right]&=
    T\left\{
    \int\limits_{0}^{1} \frac{1-\cos \Delta u\log T}{u}\d u +c\!\left(\Delta\right) \right\}
   + O\!\left(\frac{T}{\log T} \right),
\end{split}
\end{equation*}
where $c(v)$ is defined in \eqref{eq:c-function}. Therefore, Theorem \ref{thm:short-intervals-berry} is equivalent to Theorem \ref{thm:short-intervals}. 

\subsection{Proof of Theorem \ref{thm:short-intervals-keating}} To prove Theorem \ref{thm:short-intervals-keating}, we need to express the sum over primes in terms of the logarithmic derivative of $\zeta(s)$ near $s=1$.
\begin{lemma}\label{lem:perron}
Assume RH. Let $x\ge 2$ and $u\in i\R$ with $0<|u|\le \sqrt{x}$. Then, 
\begin{equation*}
    -\frac{\zeta'}{\zeta}(1+u) = \sum\limits_{n\le x}\frac{\Lambda(n)}{n^{1+u}} + \frac{x^{-u}}{u} + O\!\left(\frac{\log^2 x}{\sqrt{x}}\right),
\end{equation*}
where the implied constant is universal.
\end{lemma}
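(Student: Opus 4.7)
The plan is to derive an exact identity via partial summation and then bound a remainder integral by combining the RH bound on $\psi(y)-y$ with the truncated explicit formula. For $\Re s > 1$, summation by parts with $\psi(y) = y + R(y)$, where $R(y) := \psi(y) - y$, yields
\[
-\frac{\zeta'}{\zeta}(s) - \sum_{n\le x}\frac{\Lambda(n)}{n^s} - \frac{x^{1-s}}{s-1} = \int_x^\infty y^{-s}\,dR(y),
\]
which, by analytic continuation and the assumption of RH, remains valid for $\Re s > 1/2$, $s\ne 1$. Specializing to $s = 1+u$ (so that $x^{1-s}/(s-1) = x^{-u}/u$) reduces the lemma to showing that
\[
\int_x^\infty y^{-1-u}\,dR(y) = O\!\left(\frac{\log^2 x}{\sqrt{x}}\right)
\]
uniformly for purely imaginary $u$ with $0 < |u| \le \sqrt{x}$.

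Next, I would integrate by parts, using $R(y)\,y^{-1-u}\to 0$ as $y\to\infty$ (by the RH bound $R(y)\ll \sqrt{y}\log^2 y$), to get
\[
\int_x^\infty y^{-1-u}\,dR(y) = -\frac{R(x)}{x^{1+u}} + (1+u)\int_x^\infty R(y)\,y^{-2-u}\,dy.
\]
The boundary term is immediately $O(\log^2 x/\sqrt{x})$. For the integral, I would split at $y = x^2$. The tail $y\ge x^2$ is handled by the trivial bound $|1+u|\int_{x^2}^\infty \log^2 y\cdot y^{-3/2}\,dy \ll |1+u|\log^2 x/x$, which is $O(\log^2 x /\sqrt{x})$ since $|u|\le\sqrt{x}$. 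For the middle range $x\le y\le x^2$, I would substitute the classical truncated explicit formula (under RH)
\[
R(y) = -\sum_{|\gamma|\le T}\frac{y^{\rho}}{\rho} + O\!\left(\frac{y\log^2(yT)}{T}\right),
\]
choosing $T = x\log x$ so that the contribution of the explicit-formula error term is $O(|u|\log^3 x/T) = O(\log^2 x/\sqrt{x})$. The main term becomes the finite sum
\[
-(1+u)\sum_{|\gamma|\le T}\frac{1}{\rho}\int_x^{x^2}y^{\rho-2-u}\,dy = (1+u)\sum_{|\gamma|\le T}\frac{x^{\rho-1-u}-x^{2(\rho-1-u)}}{\rho(1+u-\rho)},
\]
where (under RH) $|x^{\rho-1-u}| = x^{-1/2}$ and, crucially, $|1+u-\rho| = \bigl|\tfrac12 + i(\mathrm{Im}\,u - \gamma)\bigr| \ge \tfrac12$, so each summand is bounded by a constant times $x^{-1/2}/\bigl(|\rho|\cdot|\rho-1-u|\bigr)$.

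The main technical obstacle is then estimating the zero sum $\sum_\rho \bigl(|\rho|\cdot|\rho-1-u|\bigr)^{-1}$. I would handle this by a dyadic decomposition over $|\gamma - \mathrm{Im}\,u|$, combined with the standard zero-count $N(t+1)-N(t) = O(\log t)$, to establish the bound
\[
\sum_\rho \frac{1}{|\rho|\cdot|\rho-1-u|} \ll \frac{\log^2(|u|+2)}{1+|u|}.
\]
Multiplying this by $|1+u|\cdot x^{-1/2}$ produces $O\!\bigl(x^{-1/2}\log^2(|u|+2)\bigr)$, which, since $|u|\le\sqrt{x}$ implies $\log(|u|+2)\ll \log x$, is $O(\log^2 x/\sqrt{x})$. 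Combining the tail estimate, the boundary term, the explicit-formula error, and the zero-sum main term yields the claimed remainder of size $O(\log^2 x/\sqrt{x})$. The saving that makes this work is the uniform lower bound $|1+u-\rho|\ge \tfrac12$, which prevents any resonance between $\mathrm{Im}\,u$ and a nearby zero from disturbing the estimate.
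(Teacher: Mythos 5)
Your proof is correct, and it essentially supplies the details that the paper delegates to a citation: the paper's entire proof of this lemma is the remark that it ``can be established using classical arguments in a similar manner to the proof of the prime number theorem (assuming RH),'' pointing to Montgomery--Vaughan, Chapter 13. The route suggested by that citation (and by the label of the lemma) is to apply a truncated Perron formula directly to $-\frac{\zeta'}{\zeta}(1+u+w)\,\frac{x^w}{w}$ and shift the contour past the zeros, collecting the residues at $w=0$ and $w=-u$; you instead pass to $R(y)=\psi(y)-y$ by partial summation and insert the truncated explicit formula on the range $x\le y\le x^2$. These are two standard packagings of the same machinery, and your version correctly isolates the two decisive inputs: under RH each zero contributes $|x^{\rho-1-u}|=x^{-1/2}$ together with the uniform bound $|\rho-1-u|\ge\tfrac12$, and the unit-interval zero count $N(t+1)-N(t)\ll\log(t+2)$ yields $\sum_\rho\bigl(|\rho|\,|\rho-1-u|\bigr)^{-1}\ll \log^2(|u|+2)/(1+|u|)$, which is indeed valid and provable exactly as you sketch; combined with $|1+u|\ll\sqrt{x}$ and the choices $T=x\log x$, split at $y=x^2$, all error terms land at $O(\log^2 x/\sqrt{x})$ with absolute constants, as required. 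Two cosmetic points only: your closed form for the zero-by-zero integral has a harmless overall sign slip (it should be $x^{2(\rho-1-u)}-x^{\rho-1-u}$ in the numerator, not its negative), and the truncated explicit formula carries additional lower-order terms (an $O(\log y)$, the constant $-\frac{\zeta'}{\zeta}(0)$, and the $\log y\,\min\bigl(1, y/(T\langle y\rangle)\bigr)$ term), all of which are dominated by $y\log^2(yT)/T$ in your range $y\ge x$, $T=x\log x$, so the estimate is unaffected.
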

\begin{proof}
This can be established using classical arguments in a similar manner to the proof of the prime number theorem (assuming RH), e.g.~ \cite[Chapter 13]{MV2}.
\end{proof}
Clearly,
\begin{align}\label{eq:primes1}
    \sum_{n\le T} \frac{\Lambda^2(n)}{n\log^2 n} 
    \left(1-\cos\!\left(\Delta\log n\right)\right) = \widetilde{C}(\Delta) +
     \sum_{n\le T} \frac{\Lambda(n)}{n\log n} 
    \left(1-\cos\!\left(\Delta\log n\right)\right) + O\left(\frac{1}{T}\right),
\end{align}
where $\widetilde{C}(v)$ is defined in \eqref{eq:c-tilde-def}. Also, note that
\begin{equation*}
    \frac{1-\cos(\Delta \log n)}{\log n} = - \frac{1}{2}\int\limits_0^{i\Delta} \Big( n^{u}-n^{-u} \Big)\,\d u.
\end{equation*}
Therefore, Lemma \ref{lem:perron} yields
\begin{align}\label{eq:primes2}
    \sum\limits_{n\le T}\frac{\Lambda(n)}{n\log n} (1-\cos(\Delta \log n)) &= 
    -\int\limits_0^{i\Delta} \left(\sum\limits_{n\le T}\frac{\Lambda(n)}{n^{1-u}}-\sum\limits_{n\le T}\frac{\Lambda(n)}{n^{1+u}}\right)\, \d u \nonumber \\
    &= \int\limits_0^{i\Delta}\left(\frac{\zeta'}{\zeta}(1-u)-\frac{\zeta'}{\zeta}(1+u)-\frac{T^{-u}+T^{u}}{u}\right)\, \d u + O\left(\frac{\Delta\, \log^2 T}{\sqrt{T}}\right).
\end{align}
Theorem \ref{thm:short-intervals-keating} now follows from Theorem \ref{thm:short-intervals-berry}, \eqref{eq:primes1}, \eqref{eq:primes2}, and a change of variables.
\section{Transition between ranges}\label{sec:transition}
In this section, we prove Corollary \ref{cor:berry} in both ranges. We begin by showing how Theorem \ref{thm:short-intervals} reduces to Fujii's theorem in \eqref{FujiiResult} when $\Delta=o(1)$.
\begin{proposition}\label{prop:fujii-reduc} Assume RH. 
Let $T\ge 4$ and $\Delta = O(1).$ Then
\begin{equation*}
    \frac{1}{2}\int\limits_1^\infty \frac{2F(\alpha)-F_\Delta(\alpha)-F_{-\Delta}(\alpha)}{\alpha^2}\, \d \alpha
    = \int\limits_1^\infty \frac{F(\alpha)\,[1-\cos(\Delta \alpha\log T)]}{\alpha^2}\, \d \alpha + O\!\left(\Delta\right),
\end{equation*}
as $T\to\infty$.
\end{proposition}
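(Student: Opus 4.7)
The plan is to reduce the difference between the two integrands to an explicit pair sum where the entire discrepancy arises from a $\Delta$-shift in the kernel $w$, and then to bound this sum by combining pointwise smoothness of $w$ with oscillation of the phase in $\alpha$.

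\textbf{Algebraic reduction.} First I would use the symmetry $F_{-\Delta}=\overline{F_\Delta}$ from \eqref{eq:symmetry_rels} to rewrite the integrand on the left as $F(\alpha)-\re F_\Delta(\alpha)$ and the integrand on the right as $F(\alpha)-F(\alpha)\cos(\alpha\Delta\log T)$. Inserting the definitions \eqref{FFunctionAlphaT} and \eqref{eq:F_delta_def}, applying the product-to-sum identity $2\cos(A)\cos(B)=\cos(A+B)+\cos(A-B)$, and using the symmetry $\gamma\leftrightarrow\gamma'$ together with $w$ even, one arrives at the compact formula
\[
D(\alpha) := F(\alpha)\cos(\alpha\Delta\log T) - \re F_\Delta(\alpha) = \re\!\left[\frac{2\pi}{T\log T}\sum_{0<\gamma,\gamma'\le T}T^{i\alpha(\gamma-\gamma'-\Delta)}\bigl[w(\gamma-\gamma')-w(\gamma-\gamma'-\Delta)\bigr]\right].
\]
The quantity to estimate is $\int_1^\infty D(\alpha)/\alpha^2\,\d\alpha$, and the entire discrepancy is concentrated in the weight difference $w(\gamma-\gamma')-w(\gamma-\gamma'-\Delta)$.

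\textbf{Estimation.} Next I would swap the pair sum with the $\alpha$-integral and bound the inner $\alpha$-integral by $\bigl|\int_1^\infty T^{i\alpha s}/\alpha^2\,\d\alpha\bigr|\ll\min\!\bigl(1,(|s|\log T)^{-1}\bigr)$ via a single integration by parts. For $\Delta=O(1)$, the mean value theorem together with the derivative bound $|w'(y)|\ll(1+|y|)^{-3}$ yields the pointwise estimate $|w(x)-w(x-\Delta)|\ll\Delta(1+|x|)^{-3}$. A dyadic decomposition of the pair sum in $|\gamma-\gamma'|$ then shows that the tail, consisting of pairs with $|\gamma-\gamma'|\gg1$, contributes $O(\Delta)$ by absolute convergence (using the standard bound $\sum_{\gamma'}(1+|\gamma-\gamma'|)^{-3}\ll\log T$ and $N(T)\ll T\log T$).

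\textbf{Main obstacle.} The delicate regime is the bulk $|\gamma-\gamma'|=O(1)$, and in particular those pairs with $\gamma-\gamma'\approx\Delta$, where the phase $T^{i\alpha(\gamma-\gamma'-\Delta)}$ loses oscillation and $\Psi(s)$ provides no savings. The key point is that the weight difference is itself small there: by the smoothness and evenness of $w$ at the origin, $w(\Delta)-w(0)=-\Delta^2/(4+\Delta^2)=O(\Delta^2)$, so $w(\gamma-\gamma')-w(\gamma-\gamma'-\Delta)=O(\Delta^2)$ whenever $\gamma-\gamma'\approx\Delta$. Combined with a Montgomery-type upper bound on the number of zero pairs whose difference lies in a short interval near $\Delta$ (the ``technical modification of Montgomery's theorem for $F(\alpha)$'' alluded to in the introduction), this yields an $O(\Delta^2)=O(\Delta)$ contribution from the delicate range, completing the proof.
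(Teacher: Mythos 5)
Your algebraic reduction is correct, and the far range is fine: for $|\gamma-\gamma'|\gg 1$ the bound $|w(x)-w(x-\Delta)|\ll\Delta(1+|x|)^{-3}$ combined with $|\Psi(s)|\ll(|s|\log T)^{-1}$ (absolute convergence alone only gives $O(\Delta\log T)$) does yield $O(\Delta)$. The genuine gap is in the bulk. Your claim that the weight difference is $O(\Delta^2)$ only holds when $|s|\ll\Delta$, where $s=\gamma-\gamma'-\Delta$; in general $w(s+\Delta)-w(s)=-\tfrac14\Delta(2s+\Delta)+\cdots\ll\Delta(|s|+\Delta)$. Once you take absolute values pair by pair, a dyadic block $|s|\asymp\sigma$ with $1/\log T\le\sigma\le 1$ contributes about $\frac{1}{T\log T}\cdot(\sigma T\log^2 T)\cdot\Delta(\sigma+\Delta)\cdot\frac{1}{\sigma\log T}\asymp\Delta(\sigma+\Delta)$, even granting the Montgomery-type window count $\ll\sigma T\log^2T$ near the nonzero offset $\Delta$ (which, on RH, is itself not a free ``standard'' bound: it comes from positivity of $F$ plus its behavior on $[-1,1]$). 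Summing over the $\asymp\log(2+\Delta\log T)$ dyadic scales gives $O\big(\Delta+\Delta^2\log(2+\Delta\log T)\big)$, not $O(\Delta)$; for $\Delta\asymp 1$ this is $\asymp\log\log T$, and for $\Delta=o(1)$ it still exceeds $O(\Delta)$ unless $\Delta\ll 1/\log\log T$. Worse, this is not just a lossy estimate: conjecturally the pair density near a fixed nonzero offset is $\asymp T\log^2T$ per unit length and $|\Psi(s)|\asymp(|s|\log T)^{-1}$, so the absolute-value sum over the intermediate range really is of that size. An argument that surrenders cancellation at the pair level therefore cannot reach $O(\Delta)$ uniformly for $\Delta=O(1)$, no matter how the counts are sharpened.

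The paper avoids this by never taking absolute values over pairs: it starts from the identity \eqref{eq:F_delta-id-positive}, which writes $2F(\alpha)-F_\Delta(\alpha)-F_{-\Delta}(\alpha)$ as a manifestly nonnegative integral against $e^{-4\pi|u|}\big[1-\cos(\Delta\alpha\log T+2\pi\Delta u)\big]$, applies the mean-value theorem to the cosine inside that integral, and absorbs the resulting factor $|u|$ via $|u|\ll e^{4\pi\varepsilon|u|}$ into the modified weight $w_{\sigma_0}$ with $\sigma_0=1-\varepsilon$. The error is then $O\big(\Delta\,\widetilde F_{\sigma_0}(\alpha)\big)$ with $\widetilde F_{\sigma_0}\ge 0$, and Propositions \ref{prop:mont-sigma} and \ref{prop:F_sigma_average} (Montgomery's asymptotic for $|\alpha|\le1$ with the modified weight, plus the positivity/averaging trick of Goldston's Lemma A for $|\alpha|>1$) give $\int_1^\infty\widetilde F_{\sigma_0}(\alpha)\,\alpha^{-2}\,\d\alpha\ll1$, hence $O(\Delta)$ with no logarithmic loss. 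To repair your proof you would need an analogous device that keeps the error as $\Delta$ times a nonnegative $F$-type density (or otherwise exploits cancellation over $\gamma-\gamma'$), rather than pointwise bounds on $|\Psi|$ and window counts.
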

\begin{proof}
Recall the identity \eqref{eq:F_delta-id-positive}:
\begin{equation*}
    2F(\alpha) -F_{\Delta}(\alpha)-F_{-\Delta}(\alpha) = 
\frac{8\pi^2}{T\log T} \int\limits_{-\infty}^\infty e^{-4\pi |u|}\left[1-\cos\!\left(\Delta \alpha\log T+2\pi\Delta u\right)\right] \left|\sum_{0<\gamma\le T} T^{i\alpha \gamma}e^{2\pi i u \gamma}
    \right|^2 \d u.
\end{equation*}
By the mean-value theorem, $\cos\!\left(\Delta \alpha\log T+2\pi\Delta u\right)=\cos(\Delta \alpha\log T)+O\!\left(\Delta |u|\right)$. We also have the identity
\begin{equation*}
    F(\alpha)  = 
\frac{4\pi^2}{T\log T} \int\limits_{-\infty}^\infty e^{-4\pi |u|} \left|\sum_{0<\gamma\le T} T^{i\alpha \gamma}e^{2\pi i u \gamma}
    \right|^2 \d u.
\end{equation*}
Therefore, we obtain 
\begin{align*}
        2F(\alpha) -F_{\Delta}(\alpha)-F_{-\Delta}(\alpha) = &F(\alpha)\,[1-\cos(\Delta \alpha\log T)]\nonumber\\
        &+O\!\left(\Delta \int\limits_{-\infty}^\infty e^{-4\pi |u|}|u| \left|\sum_{0<\gamma\le T} T^{i\alpha \gamma}e^{2\pi i u \gamma}
    \right|^2 \d u\right).
\end{align*}
The rest of the proof consists of controlling this last error term. This requires a technical but straightforward modification of Montgomery's arguments and definitions in \cite{Mo}, which we define and prove in the appendix. In particular, we define $\widetilde{F}_{\sigma_0}(\alpha)$ in \eqref{eq:F_sigma}, which is a modification of $F(\alpha)$ by using a slightly different weight. Using the estimate $|u|\ll e^{4\pi|u|\varepsilon}$ for $\varepsilon>0$ and the identity \eqref{eq:ws-hat} for $\widetilde{F}_{\sigma_0}(\alpha)$, we find that 
\begin{equation*}
    2F(\alpha) -F_{\Delta}(\alpha)-F_{-\Delta}(\alpha) = F(\alpha)\,[1-\cos(\Delta \alpha\log T)] + O\!\left(\Delta  \widetilde{F}_{\sigma_0}(\alpha)\right),
\end{equation*}
where $\sigma_0=1-\varepsilon$ (we may take any $0<\varepsilon<\frac{1}{2}$). Now, Proposition \ref{prop:F_sigma_average} implies that 
\[\int\limits_1^\infty \frac{ \widetilde{F}_{\sigma_0}(\alpha)}{\alpha^2}\, \d \alpha \ll 1.
\]
Hence the desired result now follows.
\end{proof}

\begin{proof}[Proof of Corollary \ref{cor:berry}]
Note that, by Proposition \ref{prop:fujii-reduc}, since Theorem \ref{thm:short-intervals} reduces to Fujii's theorem in \eqref{FujiiResult} when $\Delta=o(1)$, part (a) follows from Fujii's remarks \cite[Section 3]{Fu1}. For part (b), let $\Delta\gg 1$.
We want to show that 
\begin{equation*}
    \pi^2\int\limits_0^T \left[S\!\left(t+\Delta\right)-S(t)\right]^2 \d t = T\left[
    \sum\limits_{n \leq T}\frac{\Lambda^2(n)}{n \log^2 n}\left(1-\cos\!\left(\Delta \log n \right)\right)
    +1\right] + o(T). 
\end{equation*}
To prove part (b) of Conjecture \ref{con:berry}, by Theorem \ref{thm:short-intervals-berry}, it is enough to show that
\begin{equation*}
    \frac{1}{2}\int\limits_{1}^{\infty} \frac{2F(\alpha) -F_{\Delta}(\alpha) -F_{-\Delta}(\alpha)}{\alpha^2}\, \d \alpha =1+ o(1).
\end{equation*}
By Conjecture \ref{con:chan}, we have
\begin{equation*}
    \frac{1}{2}\int\limits_{1}^{\infty} \frac{2F(\alpha) -F_{\Delta}(\alpha) -F_{-\Delta}(\alpha)}{\alpha^2}\, \d \alpha = 
    \int\limits_{1}^{\infty} \frac{1-\cos\left(\Delta \alpha\log T \right)w\left(\Delta\right)}{\alpha^2}\, \d \alpha +o(1).
\end{equation*}
Now note that
\begin{equation*}
    \int\limits_1^\infty \frac{1}{\alpha^2}\,\d \alpha =1.
\end{equation*}
Then, integrating by parts, 
we find that 
\begin{equation*}
\begin{split}
    \int\limits_{1}^{\infty} \frac{\cos\!\left(\Delta \alpha\log T \right) w\!\left(\Delta\right)}{\alpha^2}\, \d \alpha = 
        O\!\left(\frac{1}{\Delta\log T}\right)
    = O\!\left(\frac{1}{\log T}\right) ,
    \end{split}
\end{equation*}
as we wanted. This completes the proof. 
\end{proof}

\section*{Appendix: variations of Montgomery's weight}
Assume RH. Let $\frac{1}{2}<\sigma_0<\frac{3}{2}$, and define 
\begin{equation}\label{eq:F_sigma}
    w_{\sigma_0}(u):= \frac{4\sigma_0^2}{4\sigma_0^2+u^2} \ \ \ \ \ \textrm{and}\ \ \ \ \  \widetilde{F}_{\sigma_0}(\alpha):=\frac{2\pi}{T\log T}\sum_{0<\gamma, \, \gamma'\le T}T^{i\alpha(\gamma-\gamma')}w_{\sigma_0}(\gamma-\gamma').
\end{equation}
Note that we recover Montgomery's function $F(\alpha)$ by taking $\sigma_0=1.$ Since
\begin{equation*}
    \widehat{w_{\sigma_0}}(y)=2\pi\sigma_0e^{-4\pi\sigma_0|y|},
\end{equation*} 
we have the identity
\begin{equation}\label{eq:ws-hat}
    \widetilde{F}_{\sigma_0}(\alpha) = \frac{4\pi^2\sigma_0}{T\log T}\int\limits_{-\infty}^\infty e^{-4\pi\sigma_0|y|}\left|\sum_{0<\gamma\le T}T^{i\alpha \gamma}e^{2\pi y\gamma}
    \right|^2\, \d y.
\end{equation}
In particular, $\widetilde{F}_{\sigma_0}(\alpha)\ge 0$, and $\widetilde{F}_{\sigma_0}$ is even. Following Montgomery \cite{Mo} (see also \cite{GoMo}), we have the following asymptotic formula for $\widetilde{F}_{\sigma_0}(\alpha)$.
\begin{proposition}\label{prop:mont-sigma}
Let $\tfrac{1}{2}<\sigma_0<\tfrac{3}{2}$, and define $\widetilde{F}_{\sigma_0}(\alpha)$ as in \eqref{eq:F_sigma}. We have
\begin{equation*}
    \widetilde{F}_{\sigma_0}(\alpha) = \sigma_0T^{-2|\alpha|\sigma_0}\log T(1+o(1)) +|\alpha| + o(1),
\end{equation*}
uniformly for $0\le |\alpha|\le 1$, as $T\to\infty$.
\end{proposition}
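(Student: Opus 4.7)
The plan is to adapt Montgomery's proof of \cite[Equation~(18)]{Mo} for $F(\alpha)$ together with the Goldston-Montgomery refinement \cite{GoMo}, with the only substantive modification being that the explicit formula and the auxiliary Poisson-type kernel are taken at $\sigma - \tfrac{1}{2} = \sigma_0$ instead of at $\sigma - \tfrac{1}{2} = 1$ (Montgomery's original, corresponding to $\sigma_0 = 1$). This substitution is natural because of the convolution identity
\[
\left(\frac{\sigma_0}{\sigma_0^{2}+t^{2}}\right) * \left(\frac{\sigma_0}{\sigma_0^{2}+t^{2}}\right)(u) \;=\; \frac{\pi}{2\sigma_0}\, w_{\sigma_0}(u),
\]
which shows that $w_{\sigma_0}$ is (up to an explicit constant) the self-convolution of the kernel $f_{\sigma_0}(t) := \sigma_0/(\sigma_0^{2}+t^{2})$; moreover $f_{\sigma_0}(t-\gamma) = \re[1/(s-\rho)]$ at $s = \tfrac{1}{2}+\sigma_0+it$, $\rho = \tfrac{1}{2}+i\gamma$, so the kernel $f_{\sigma_0}$ is exactly what appears in the partial-fraction expansion of $\zeta'/\zeta$ at that abscissa.

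Accordingly, I would define, for $x = T^{|\alpha|}$,
\[
R_{\sigma_0}(x,t) \;:=\; 2\sigma_0 \sum_{\gamma}\frac{x^{i\gamma}}{\sigma_0^{2}+(t-\gamma)^{2}},
\]
and apply the $\sigma_0$-analogue of \cite[Lemma~1]{Mo} to express $R_{\sigma_0}(x,t)\,x^{-it}$ as a finite Dirichlet polynomial supported on primes $n \le x$, plus a tail over $n > x$, a slowly varying term arising from the functional equation, and a controlled error. I would then compute $\int_{0}^{T} |R_{\sigma_0}(x,t)|^{2}\,\d t$ in two ways. On one hand, using Plancherel together with the convolution identity above and a standard localization argument in the spirit of \cite[pp.~186--187]{Mo} (to restrict the $\gamma$-sums to zeros in a bounded neighbourhood of $[0,T]$) shows that this integral is asymptotically $\sigma_0^{-1}\,T^{1-|\alpha|}\log T\;\widetilde{F}_{\sigma_0}(\alpha)$. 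On the other hand, Montgomery-Vaughan's mean value inequality \cite{MV3} controls the square-mean of the Dirichlet-polynomial representation: the short prime sum, combined with the prefactor $x^{-2\sigma_0} = T^{-2|\alpha|\sigma_0}$, should yield the diagonal main term $\sigma_0\,T^{-2|\alpha|\sigma_0}\log T\,(1+o(1))$; the long prime tail $n > x$, evaluated via partial summation against $\sum_{n>x}\Lambda^{2}(n)/n^{1+2\sigma_0}$ and combined with the cross term against the $\log(t/2\pi)$ piece, should yield $|\alpha| + o(1)$; and the off-diagonal Montgomery-Vaughan error is $O(1)$ uniformly for $|\alpha| \le 1$.

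To upgrade the resulting pointwise asymptotic to one uniform in $|\alpha|\le 1$, I would apply the standard averaging device of Goldston-Montgomery: integrate $\widetilde{F}_{\sigma_0}$ against a suitable bump centred at the target $\alpha_{0}$, exploit the positivity of $\widetilde{F}_{\sigma_0}$, and invoke the uniform majorization $\int_{0}^{\beta}\widetilde{F}_{\sigma_0}(\alpha)\,\d\alpha \ll_{\sigma_0} 1+\beta$, which is obtained by precisely the same argument as \cite[Lemma~A]{Go}. The main technical obstacle throughout is simply careful bookkeeping: one must track the $\sigma_0$-dependence of every error term produced by the adapted explicit formula and verify that the constants arising from \cite{MV3} and from the localization step stay controlled uniformly for $\sigma_0$ in a compact subinterval of $(\tfrac{1}{2},\tfrac{3}{2})$. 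Beyond this accounting, no fundamentally new idea beyond Montgomery's and Goldston-Montgomery's original arguments is required.
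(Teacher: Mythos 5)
Your skeleton is the same as the paper's: run Montgomery's explicit formula at abscissa $\sigma=\tfrac12+\sigma_0$, use the self-convolution identity for the kernel $\sigma_0/(\sigma_0^2+t^2)$ to turn $\int_0^T\big|2\sigma_0\sum_\gamma x^{i\gamma}/(\sigma_0^2+(t-\gamma)^2)\big|^2\,\d t$ into $\widetilde F_{\sigma_0}$, evaluate the other side with Montgomery--Vaughan, and invoke Goldston--Montgomery for uniformity. However, your ledger does not balance, and the attribution of the main terms is wrong. First, the zero side is $\sigma_0^{-1}\,\widetilde F_{\sigma_0}(\alpha)\,T\log T+O(\log^3 T)$ at $x=T^{|\alpha|}$; the factor $T^{1-|\alpha|}\log T$ in your sketch is not correct. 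Second, the term $\sigma_0T^{-2|\alpha|\sigma_0}\log T$ does \emph{not} come from the diagonal of the short prime sum: the prefactors $x^{\mp2\sigma_0}$ are exactly cancelled by the growth/decay of the corresponding prime sums, since $x^{-2\sigma_0}\sum_{n\le x}\Lambda^2(n)n^{2\sigma_0-1}\sim\tfrac{\log x}{2\sigma_0}$ and $x^{2\sigma_0}\sum_{n>x}\Lambda^2(n)n^{-1-2\sigma_0}\sim\tfrac{\log x}{2\sigma_0}$, so the two diagonals together give $\tfrac{T\log x}{\sigma_0}$, i.e.\ precisely the $|\alpha|$ term after dividing by $\sigma_0^{-1}T\log T$; no power $T^{-2|\alpha|\sigma_0}$ survives from the primes. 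The first main term comes instead from the mean square of the third piece of the explicit formula, $x^{-\sigma_0+it}\big(\log\tfrac{t}{2\pi}+O(1)\big)$, which contributes $x^{-2\sigma_0}\,T\log^2T\,(1+o(1))$; its cross terms with the prime sums are error terms, not the source of $|\alpha|$ as you suggest. Carrying out Montgomery's computation correctly produces these terms automatically, but as written your accounting would not reproduce the stated asymptotic.

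There is also a gap in the uniformity step. Averaging $\widetilde F_{\sigma_0}$ against a bump and using positivity together with $\int_0^\beta\widetilde F_{\sigma_0}\ll 1+\beta$ only controls averages (that is the content of the paper's Proposition on $\int_1^\beta\widetilde F_{\sigma_0}$, which is a \emph{consequence} of the pointwise asymptotic, not a route to it); it cannot recover the pointwise formula, in particular neither the rapidly varying term $\sigma_0T^{-2|\alpha|\sigma_0}\log T$ near $\alpha=0$ nor a matching lower bound near $|\alpha|=1$. The genuine obstruction to uniformity is the Montgomery--Vaughan off-diagonal error, which at $x=T^{|\alpha|}$ is $O(x\log x)$ and is comparable to the main term $T\log T$ when $|\alpha|$ is close to $1$. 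The fix used in the paper is to replace \cite[Corollary 3]{MV3} by the sharper mean value estimate of Goldston--Montgomery \cite[Lemma 7]{GoMo}, which turns this error into $O(T\sqrt{\log x})$ and yields the asymptotic uniformly for $|\alpha|\le 1$; that, rather than bump-averaging, is the mechanism you need.
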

\begin{proof}
In Montgomery's explicit formula, we take $\sigma=\frac{1}{2}+\sigma_0$ to obtain, for any $\frac{1}{2}<\sigma_0<\frac{3}{2}$ and $x\ge 1$, 
\begin{align*}
    2\sigma_0 \sum_{\gamma}\frac{x^{i\gamma}}{\sigma_0^2 + (t-\gamma)^2} = 
    &-x^{-\sigma_0}\sum_{n\le x}\frac{\Lambda(n)n^{\sigma_0-1/2}}{n^{it}}
    -x^{\sigma_0}\sum_{n> x}\frac{\Lambda(n)}{n^{1/2+\sigma_0+it}}\\
    &+x^{-\sigma_0+it}(\log \tau +O(1)) + O(x^{1/2}\tau^{-1}),
\end{align*}
where $\tau=|t|+2$, and the implied constants depend only on $\sigma_0$ (which we henceforth assume to be fixed). We write the above as $L(x,\, t)=R(x,\, T)$. Note that 
\begin{equation*}
    \int\limits_{-\infty}^{\infty} \frac{1}{[\sigma_0^2+(t-\gamma)^2][\sigma_0^2+(t-\gamma')^2]}\, \d t = \frac{2\pi}{\sigma_0}\cdot \frac{1}{4\sigma_0^2+(\gamma-\gamma')^2} = \frac{2\pi}{4\sigma_0^3}\,w_{\sigma_0}(\gamma-\gamma').
\end{equation*}
Then, taking the absolute value, squaring, and integrating, following Montgomery's argument, we obtain
\begin{equation}\label{eq:ms-l}
\int\limits_{0}^T |L(x,\, T)|^2\, \d t = \frac{2\pi}{\sigma_0}\sum_{0<\gamma,\, \gamma'\le T}x^{i(\gamma-\gamma')}w_{\sigma_0}(\gamma-\gamma') +O(\log^3 T) = \frac{1}{\sigma_0}\widetilde{F}_{\sigma_0}(\alpha)\, T\log T + O(\log^3 T).
\end{equation}
Now, let us analyze $\int\limits_{0}^T |R(x,\, T)|^2$. For the Dirichlet series, using \cite[Corollary 3]{MV3}, we obtain
\begin{align}\label{eq:ms-dirich}
    \int\limits_0^T \left| 
-x^{-\sigma_0}\sum_{n\le x}\frac{\Lambda(n)n^{\sigma_0-1/2}}{n^{it}}
    -x^{\sigma_0}\sum_{n> x}\frac{\Lambda(n)}{n^{1/2+\sigma_0+it}}
\right|^2 \, \d t = 
&x^{-2\sigma_0}\sum_{n\le x}\frac{\Lambda^2(n)}{n^{1-2\sigma_0}}(T+O(n))  \nonumber\\
&+x^{2\sigma_0}\sum_{n>x} \frac{\Lambda^2(n)}{n^{1+2\sigma_0}}(T+O(n)).
\end{align}
Note that 
\[
\int\limits_{1}^x y^{2\sigma_0-1} \log y \, \d y = \frac{x^{2 \sigma_0} (2 \sigma_0 \log x-1)+1}{4 \sigma_0^2} \ \ \  \textrm{and} \ \ \  
\int\limits_{x}^\infty y^{-1 - 2 \sigma_0} \log y \, \d y =
\frac{x^{-2 \sigma_0} (2 \sigma_0 \log x+1)}{4 \sigma_0^2}.
\]
Then, by the prime number theorem with error term, \eqref{eq:ms-dirich} equals
\begin{equation*}
    \frac{T\log x}{\sigma_0} + O(T) + O(x\log x).
\end{equation*}
We note that on the left-hand side of \eqref{eq:ms-dirich} we may use an estimate of Goldston and Montgomery \cite[Lemma 7]{GoMo} instead of  \cite[Corollary 3]{MV3} to replace the error term $O(x\log x)$ with $O(T\sqrt{\log x})$. 
Continuing with our proof, we have 
\[
\int\limits_{0}^T\left|x^{-\sigma_0+it} (\log \tau +O(1))
\right|^2  \, \d t = \frac{T \log^2 T + O(T\log T)}{x^{2\sigma_0}}. 
\]
If we choose $x=T^\alpha$ for $0 \le \alpha \le 1-\varepsilon$, then following Montgomery's argument the above estimates imply that
\[R(T^\alpha,\, T) = T\log T\left(T^{-2\alpha \sigma_0} \log T\,(1+o(1)) +\frac{\alpha}{\sigma_0}+o(1)\right).
\]
We combine this with \eqref{eq:ms-l} to obtain the desired result for $|\alpha|\le 1-\varepsilon.$ As remarked above, by the argument of Goldston and Montgomery \cite[Lemma 7]{GoMo}, this can be extended uniformly to $|\alpha|\le 1.$
\end{proof}
We also note that the following estimate holds.
\begin{proposition}\label{prop:F_sigma_average}
Let $\tfrac{1}{2}<\sigma_0<\tfrac{3}{2}$, $\beta>1$,  and define $\widetilde{F}_{\sigma_0}(\alpha)$ as in \eqref{eq:F_sigma}. Then,
\begin{equation*}\label{eq:Fs-average}
    \int\limits_1^\beta \widetilde{F}_{\sigma_0}(\alpha)\, \d \alpha \ll \beta.
\end{equation*}
\end{proposition}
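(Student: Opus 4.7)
This proposition is the precise analogue for the modified function $\widetilde{F}_{\sigma_0}$ of Goldston's Lemma~A in \cite{Go}, which establishes the corresponding bound $\int_0^\beta F(\alpha)\,\d\alpha \ll 1+\beta$ for Montgomery's original function (invoked at \eqref{eq:F_bounded} above). My plan is to adapt that argument to the weight $w_{\sigma_0}$; the changes required are largely cosmetic, corresponding to replacing the explicit-formula parameter $\sigma = 3/2$ by $\sigma = \tfrac{1}{2} + \sigma_0$. The starting point is the identity established in the proof of Proposition~\ref{prop:mont-sigma}: Montgomery's explicit formula at parameter $\sigma = \tfrac{1}{2} + \sigma_0$, after squaring and integrating over $t\in[0,T]$, yields
\[
\tfrac{1}{\sigma_0}\,\widetilde{F}_{\sigma_0}(\alpha)\,T\log T = \int_0^T |R(T^\alpha,t)|^2\,\d t + O(\log^3 T),
\]
where $R(x,t)$ is the right-hand side of the explicit formula, consisting of two Dirichlet polynomials $D_1(x,t)$, $D_2(x,t)$ over prime powers together with an explicit remainder term $E(x,t)$.

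I would integrate this identity over $\alpha \in [1, \beta]$, swap the order of integration, and change variables $x = T^\alpha$ (so $\d\alpha = \d x/(x\log T)$) to reduce the problem to bounding
\[
\frac{1}{T\log^2 T}\int_0^T \int_T^{T^\beta} |R(x,t)|^2 \,\frac{\d x}{x}\,\d t.
\]
The diagonal contribution $\gamma = \gamma'$ in $\widetilde{F}_{\sigma_0}(\alpha)$ is independent of $\alpha$ and equals $w_{\sigma_0}(0)\cdot 2\pi N(T)/(T\log T) \sim 1$, so it contributes $\sim (\beta-1) = O(\beta)$ to the integral and can be separated off at the start. For the off-diagonal part, one bounds the mean squares of $D_1, D_2$ via the Montgomery--Vaughan theorem combined with the Goldston--Montgomery refinement \cite[Lemma~7]{GoMo} (already invoked in the proof of Proposition~\ref{prop:mont-sigma}), controls $E(x,t)$ directly from its explicit shape, and evaluates the cross terms via orthogonality of the exponentials $(m/n)^{it}$.

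The main obstacle is to extract the correct linear dependence on $\beta$---rather than the $O(\beta^2)$ that one gets from the naive triangle-inequality bound $|R|^2 \le 3(|D_1|^2 + |D_2|^2 + |E|^2)$. Indeed, each of $\int_0^T |D_j|^2\,\d t$ for $j=1,2$ has a main term of size $T\log x/(2\sigma_0)$, and integrating against $\d x/(x\log^2 T)$ over $[T,T^\beta]$ yields contributions of order $\beta^2$; the explicit piece $\int_0^T |E|^2\,\d t$ is even worse, producing terms as large as $T^\beta$ before normalization. These large pieces must cancel among themselves inside $|R|^2$ through the mixed terms $\Re(D_1\overline{D_2})$, $\Re(D_1\overline{E})$, and $\Re(D_2\overline{E})$---this cancellation is the crux of Goldston's original Lemma~A, and I expect it to carry over to our setting with only the cosmetic change $\sigma = 3/2 \leadsto \sigma = \tfrac{1}{2}+\sigma_0$, leaving exactly the desired $O(\beta)$ bound after normalization.
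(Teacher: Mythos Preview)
Your proposal misidentifies Goldston's Lemma~A argument. The actual proof---both in \cite{Go} and as invoked here---is a short positivity trick that avoids the explicit formula entirely for $\alpha>1$. One writes, for any $\beta_0\in\R$,
\[
\int_{-1}^{1}(1-|\alpha|)\,\widetilde{F}_{\sigma_0}(\beta_0+\alpha)\,\d\alpha
= \frac{2\pi}{T\log T}\sum_{0<\gamma,\gamma'\le T}\left(\frac{\sin\tfrac{1}{2}(\gamma-\gamma')\log T}{\tfrac{1}{2}(\gamma-\gamma')\log T}\right)^{\!2} T^{i\beta_0(\gamma-\gamma')}\, w_{\sigma_0}(\gamma-\gamma').
\]
Since the Fej\'er weight and $w_{\sigma_0}$ are nonnegative and $|T^{i\beta_0(\gamma-\gamma')}|=1$, the right side is bounded in absolute value, term by term, by its value at $\beta_0=0$, which Proposition~\ref{prop:mont-sigma} shows is $O(1)$. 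As $\widetilde{F}_{\sigma_0}\ge 0$, the left side dominates $\tfrac{1}{2}\int_{\beta_0-1/2}^{\beta_0+1/2}\widetilde{F}_{\sigma_0}$, so every unit interval contributes $O(1)$ and summing gives $\int_1^\beta \widetilde{F}_{\sigma_0}\ll\beta$. The only inputs are nonnegativity and the asymptotic on $|\alpha|\le 1$---precisely what the paper cites.

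Your direct-integration route is genuinely different and runs into a real obstacle. For $x=T^\alpha$ with $\alpha>1$, the Montgomery--Vaughan error $O(n)$ dominates $T$ once $n>T$, so $\int_0^T|D_1|^2\,\d t$ acquires a term of size $x\log x$ rather than $T\log x$; the explicit-formula remainder $O(x^{1/2}\tau^{-1})$ likewise contributes $\asymp x$ after squaring and integrating. These swamp the normalization $T\log T$ as soon as $\alpha>1$. The cross-term cancellation you hope for is not part of Goldston's argument, and establishing it to the required precision would amount to controlling $\int_0^T|R(T^\alpha,t)|^2\,\d t$ directly for $\alpha>1$---which is equivalent to bounding $\widetilde{F}_{\sigma_0}(\alpha)$ itself in that range, the very thing the positivity trick circumvents.
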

\begin{proof}
Using an argument of Goldston \cite[Lemma A]{Go}, 
this follows from Proposition \ref{prop:mont-sigma} and the fact that $\widetilde{F}_{\sigma_0}(\alpha)\ge 0$.
\end{proof}

\bigskip

\noindent{\sc Acknowledgements}. We thank Emanuel Carneiro, Tsz Ho Chan, Winston Heap, Jon Keating, and the anonymous referee for numerous helpful comments and suggestions. We also thank Dan Goldston for encouragement. MML was supported by fellowship from the Graduate School of the University of Mississippi. MBM~was supported by the NSF grant DMS-2101912 and the Simons Foundation (award 712898). EQ-H acknowledges support from CNPq (Brazil), the STEP Programme of ICTP (Italy), and the Austrian Science Fund (FWF) project P-35322.

\bibliographystyle{abbrv}
\bibliography{bibliography}
\end{document}